\documentclass[11pt,a4paper]{article}

\usepackage[margin=1.0in]{geometry}
\usepackage[bookmarks=false]{hyperref}
\usepackage{amsmath}
\usepackage{amsfonts}
\usepackage{algorithm}
\usepackage{algorithmic}
\usepackage{amsthm}
\usepackage{graphicx}
\usepackage{appendix}
\usepackage{subcaption}

\newtheorem{theorem} {Theorem}
\newtheorem{lemma} {Lemma}
\newtheorem{definition} {Definition}
\newtheorem{corollary} {Corollary}

\newtheorem{assumption} {Assumption}

\newtheorem{remark} {Remark}

\def\x{{\mathbf{x}}}

\def\u{{\mathbf{u}}}
\def\n{{\mathbf{n}}}

\def\z{{\mathbf{z}}}

\def\y{{\mathbf{y}}}

\def\X{{\mathbf{X}}}
\def\Y{{\mathbf{Y}}}

\def\A{{\mathbf{A}}}
\def\D{{\mathbf{D}}}
\def\M{{\mathbf{M}}}

\def\I{{\mathbf{I}}}
\def\B{{\mathbf{B}}}

\def\V{{\mathbf{V}}}
\def\Z{{\mathbf{Z}}}
\def\W{{\mathbf{W}}}
\def\U{{\mathbf{U}}}

\DeclareMathOperator*{\argmin}{arg\,min}

\newcommand{\mS}{\mathcal{S}}
\newcommand{\mX}{\mathcal{X}}

\newcommand{\mbS}{\mathbb{S}}

\newcommand{\E}{\mathbb{E}}

\newcommand{\trace}{\textrm{Tr}}
\newcommand{\rank}{\textrm{rank}}

\newcommand{\reals}{\mathbb{R}}
\newcommand{\var}{\textbf{Var}}
\newcommand{\diag}{\textrm{diag}}
\newcommand{\breg}{B}


\title{On the Efficient Implementation of the \\ Matrix Exponentiated Gradient Algorithm \\for  Low-Rank Matrix Optimization}
\author{Dan Garber \\ {\small Technion - Israel Institute of Technology}\\ {\small \texttt{dangar@technion.ac.il}}
\and
Atara Kaplan \\  {\small Technion - Israel Institute of Technology} \\ {\small  \texttt{ataragold@campus.technion.ac.il}}
}

\date{}

\begin{document} 

\maketitle

\begin{abstract}
Convex optimization over the spectrahedron, i.e., the set of all real $n\times n$ positive semidefinite matrices with unit trace,  has important applications in machine learning, signal processing and statistics, mainly as a convex relaxation for optimization  problems with low-rank matrices. It is also one of the most prominent examples in the theory of first-order methods for convex optimization in which non-Euclidean methods can be significantly preferable to their Euclidean counterparts. In particular, the desirable choice is the Matrix Exponentiated Gradient (MEG) method which is based on the  Bregman distance induced by the (negative) von Neumann entropy. Unfortunately, implementing MEG requires  a full SVD computation on each iteration, which is not scalable to high-dimensional problems.

In this work we propose an efficient implementations of MEG, both with deterministic and stochastic gradients, which are tailored  for optimization with low-rank matrices, and only use a single low-rank SVD computation on each iteration. We also provide efficiently-computable certificates for the correct convergence of our methods. Mainly, we prove that under a strict complementarity condition, the suggested methods converge from a ``warm-start" initialization with similar rates to their full-SVD-based counterparts. 
Finally, we bring empirical experiments which both support our theoretical findings and demonstrate the practical appeal of our methods.



\end{abstract}

\section{Introduction}

In this paper we consider the following optimization problem:
\begin{align} \label{eq:Model}
\min_{\X\in\mathcal{S}_n}{f(\X)},
\end{align}
where $\mathcal{S}_n=\{\X\in\mathbb{S}^n\ \vert\ \trace(\X)=1,\ \X\succeq0\}$ is the spectrahedron in $\mathbb{S}^n$ --- the space
of $n\times n$ real symmetric matrices, and $f:\reals^{n\times n}\rightarrow\reals$ is convex. For the sake of simplicity, throughout this paper we assume the gradient vector of $f(\cdot)$ is a symmetric matrix over $\mbS^n$, i.e., $\nabla{}f(\X)\in\mbS^n$ for all $\X\in\mbS^n$\footnote{If $\nabla{}f(\X)$ is not a symmetric matrix then, throughout this paper, it could always be replaced with the symmetric matrix $(\nabla{}f(\X)+\nabla{}f(\X)^{\top})/2$. In particular, since we consider optimization over symmetric matrices, this transformation does not change the standard matrix inner-product $\langle{\Y,\nabla{}f(\X)}\rangle$ for any $\X,\Y\in\mbS^n$.}. 
We also assume $f(\cdot)$ is $\beta$-smooth over $\mS_n$ in the following typical sense:
$\Vert \nabla f(\X)-\nabla f(\Y)\Vert_2\le \beta\Vert \X-\Y\Vert_*$ for all $\X,\Y\in\mS_n$, where $\Vert{\cdot}\Vert_2$ is the spectral norm for matrices  (largest singular value) and $\Vert{\cdot}\Vert_*$ is the nuclear norm for matrices (sum of singular values).

Problem \eqref{eq:Model} is used to model numerous deterministic and stochastic low-rank matrix recovery problems. These problems have many important modern applications in machine learning, statistics, signal processing, which include, among others, the well known tasks of matrix completion \cite{MatrixCompletion1,MatrixCompletion2,MatrixCompletion3,MatrixCompletion4}, robust PCA \cite{robustPCA1,robustPCA2,robustPCA3,robustPCA4,robustPCA5}, and phase retrieval \cite{phaseRetrieval1,phaseRetrieval2,phaseRetrieval3}.

We consider Problem \eqref{eq:Model} both in the deterministic first-order setting where the full gradient of $f(\cdot)$ is available, and in the stochastic first-order setting where only unbiased estimates for the gradient vector are available. In both settings, in terms of gradient complexity, the methods  of choice for solving Problem \eqref{eq:Model} in large scale are the non-Euclidean proximal gradient methods that are based on Bregman distances w.r.t. the (negative) von Neumann entropy (entropy w.r.t. the eigenvalues) \cite{beckMD,sebastienMD,beckOptimizationBook, BregmanMatrices}.  This is because for Problem \eqref{eq:Model},  these methods are almost dimension independent \cite{sebastienMD}. In particular, the complexities of Euclidean methods  depend on the Lipschitz parameter of the gradient in Euclidean norm when using full gradients, or the Euclidean norm of the stochastic gradients and their variance in Euclidean norm when using stochastic gradients. On the other hand, proximal gradient methods based on the von Neumann entropy, measure these quantities w.r.t. to the spectral norm. This can often lead to convergence rates with significantly improved dependence on the dimension --- a crucial consideration in high dimensional settings. For example, in the stochastic setting this can improve a factor of $\sqrt{n}$ in the rate of Euclidean stochastic gradient descent, to only $\sqrt{\log{n}}$ for stochastic mirror descent with Bregman distances w.r.t. the von Neumann entropy, or a factor of $n$ in the rate of Euclidean projected gradient descent for smooth minimization, to only $\log{n}$ for proximal gradient with Bregman distancess w.r.t. the von Neumann entropy \cite{sebastienMD}.


The family of proximal gradient methods with the von Neumann entropy for Problem \eqref{eq:Model} is also referred to in the literature as the Matrix Exponentiated Gradient (MEG) method (see for instance \cite{BregmanMatrices}), which we also adopt in this work. See also Algorithm \ref{alg:exactMEG} in the sequel for a full description of the method (both with deterministic and stochastic gradients).

Despite the improved gradient complexity of the MEG method over its Euclidean counterparts, a clear caveat is the high computational cost of executing each iteration, since each iteration requires in worst case a full-rank SVD computation of an $n\times n$ matrix  (to compute matrix logarithm and matrix exponential) which amounts to $O(n^3)$ runtime and is clearly not practical for high dimensional problems\footnote{this runtime could in principle be improved to $O(n^{\omega})$, where $\omega$ is the best exponent for fast matrix multiplication algorithms (currently $\omega < 2.37286$ \cite{alman2021refined}), however, to the best of our knowledge, such algorithms are not considered practical and are not in common use.}. 

While the high computational cost of high-rank SVD computations is also present in Euclidean proximal gradient methods for Problem \eqref{eq:Model}, in the recent works \cite{garberLowRankSmooth, garberStochasticLowRank} it was established that under a natural strict complementarity condition (see discussion in the sequel) and at least in a certain ball around an optimal solution, Euclidean proximal gradient methods (both deterministic and stochastic) which only apply low-rank SVD computations (with rank that matches that of the optimal solution), provably converge to an optimal solution with rates similar to their full-SVD-based counterparts, resulting in computationally efficient methods, at least from a ``warm-start" initialization. This was also supported by empirical evidence brought in   \cite{garberLowRankSmooth, garberStochasticLowRank} which demonstrated the correct convergence of these methods using only low-rank computations in practice.

Thus, given that von Neumann entropy-based gradient methods are superior (in terms of gradient complexity) to their Euclidean counterparts for Problem \eqref{eq:Model}, and given the progress in \cite{garberLowRankSmooth, garberStochasticLowRank} on SVD-efficient Euclidean gradient methods, it is natural to ask whether similar results could also be obtained for the matrix exponentiated gradient method, and by that, to obtain the best of both worlds.

In this paper we provide a positive resolution to this question. We propose a variant of the MEG method which utilizes only low-rank SVD computations, and prove that under the strict complementary condition, it converges from a ``warm-start" initialization to an optimal solution, with similar rates to its full-SVD-based counterparts ($O(1/t)$ for deterministic gradients and $O(1/\sqrt{t})$ for stochastic gradients). 

At a high-level, our low-rank MEG variant works by replacing all lower eigenvalues in the matrix obtained from a standard MEG update, with some small value (the same for all lower eigenvalues), which in turn requires to maintain and decompose only low-rank matrices. This modification of the MEG update arises from an intuition that in the proximity of a low-rank optimal solution, the lower eigenvalues of the iterates are expected to decay rapidly towards zero (while remaining positive throughout though). Our main technical contribution is in showing, via rather involved analysis, that such a modification, under a strict complementarity condition and in certain proximity of the optimal solution, indeed results in sufficiently-bounded approximation errors which yields convergence rates similar to those of the original method. 
Importantly, the techniques used for Euclidean gradient methods in \cite{garberLowRankSmooth, garberStochasticLowRank} are not easily extendable to our setting, since the MEG update has a very different structure than its Euclidean counterpart. In particular, while the Euclidean projected gradient update thresholds lower eigenvalues to zero and thus naturally promotes low-rank iterates, the MEG update almost always produces full rank matrices, which  requires both to modify the algorithm itself and a substantially more involved analysis (a more detailed discussion is brought in Section \ref{sec:challenges}).
  

Aside from our local convergence guarantees  we also suggest a recipe for computing certificates which can validate the correct convergence of our proposed low-rank MEG variant throughout the run. Finally, we present numerical simulations which support both our theory and overall methodology of incorporating low-rank updates to MEG and computing certificates for convergence.

As discussed, our provable local convergence guarantees hold under a strict complementarity condition which is also central to the analyses in \cite{garberLowRankSmooth, garberStochasticLowRank}, and also in the recent related work \cite{spectralFrankWolfe}. This condition is given in the following Assumption \ref{ass:strictcomp}. While, similarly to \cite{garberLowRankSmooth, garberStochasticLowRank}, we state this condition in terms of a certain eigen-gap in the gradient vector at optimal solutions which is natural for our analysis, in \cite{spectralFrankWolfe} it was showed that it is indeed equivalent to strict complementarity for Problem \eqref{eq:Model}.\footnote{This means that there exists a corresponding optimal solution to the dual problem $\Z^*\succeq 0$, such that $\rank(\X^*) + \rank(\Z^*)=n$.}

\begin{assumption}[strict complementarity]\label{ass:strictcomp}
An optimal solution $\X^*$  of rank $r^*$ for Problem \eqref{eq:Model} satisfies the strict complementarity assumption with parameter $\delta >0$ if $\lambda_{n-r^*}(\nabla f(\X^*))-\lambda_{n}(\nabla f(\X^*)) \geq \delta$.
\end{assumption}

Importantly, a positive eigen-gap between $\lambda_{n-r}(\nabla{}f(\X^*))$ and $\lambda_n(\nabla{}f(\X^*))$ for some optimal solution $\X^*$, as considered in Assumption \ref{ass:strictcomp}, is a sufficient condition (though not necessary)  for  $\X^*$ to have rank at most $r$. This is captured by the following lemma, which in particular implies that the rank of any optimal solution cannot exceed the algebraic multiplicity of the smallest eigenvalue in the corresponding gradient vector.

\begin{lemma}[Lemma 7 in \cite{garberLowRankSmooth}] \label{lemma:opt-grad}
Let $\X^*\in\mathcal{X^*}$ be any optimal solution, such that $\rank(\X^*)=r^*$, and write its eigen-decomposition as $\X^*=\sum_{i=1}^{r^*}{\lambda_i \mathbf{v}_i \mathbf{v}_i^{\top}}$. Then, the gradient vector $\nabla f(\X^*)$ admits an eigen-decomposition such that the set of vectors $\{\mathbf{v}_i\}_{i=1}^{r^*}$ is a set of top eigen-vectors of $(-\nabla f(\X^*))$ which corresponds to the eigenvalue $\lambda_1(-\nabla f(\X^*))=-\lambda_n(\nabla f(\X^*))$. 
\end{lemma}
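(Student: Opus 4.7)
The plan is to derive the conclusion from the first-order optimality condition of $\X^*$ over the spectrahedron combined with a simple Rayleigh-quotient argument.

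First I would note that since $f$ is convex and $\X^*$ minimizes $f$ over the convex set $\mS_n$, first-order optimality gives $\langle \nabla f(\X^*),\Y-\X^*\rangle \geq 0$ for every $\Y\in\mS_n$. Rearranging, $\X^*$ itself is a minimizer of the linear functional $\Y\mapsto\langle \nabla f(\X^*),\Y\rangle$ over $\mS_n$. A standard computation shows that the minimum of a linear functional $\langle \M,\cdot\rangle$ over the spectrahedron equals $\lambda_n(\M)$, and is attained precisely by those $\Y\in\mS_n$ whose range lies in the eigenspace of $\M$ associated with $\lambda_n(\M)$. Hence $\langle \nabla f(\X^*),\X^*\rangle = \lambda_n(\nabla f(\X^*))$.

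Next I would expand this equality using the eigen-decomposition $\X^* = \sum_{i=1}^{r^*}\lambda_i\v_i\v_i^{\top}$ (with $\lambda_i>0$ and $\sum_i \lambda_i = 1$, since $\X^*\in\mS_n$). This yields
\[
\sum_{i=1}^{r^*}\lambda_i\,\v_i^{\top}\nabla f(\X^*)\v_i \;=\; \lambda_n(\nabla f(\X^*)).
\]
By the Rayleigh quotient inequality, each term satisfies $\v_i^{\top}\nabla f(\X^*)\v_i \geq \lambda_n(\nabla f(\X^*))$, so equality of the convex combination (with strictly positive weights) forces $\v_i^{\top}\nabla f(\X^*)\v_i = \lambda_n(\nabla f(\X^*))$ for every $i=1,\dots,r^*$. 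Attainment of the Rayleigh quotient at its minimum value implies that each $\v_i$ is an eigenvector of $\nabla f(\X^*)$ with eigenvalue $\lambda_n(\nabla f(\X^*))$, i.e., a top eigenvector of $-\nabla f(\X^*)$ corresponding to $\lambda_1(-\nabla f(\X^*)) = -\lambda_n(\nabla f(\X^*))$.

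Finally, since the $\v_i$ are orthonormal (eigenvectors of the symmetric matrix $\X^*$) and all lie in the bottom eigenspace of $\nabla f(\X^*)$, I would complete them to an orthonormal basis of that eigenspace and then to an orthonormal basis of $\reals^n$ by choosing any orthonormal eigen-basis for the orthogonal complement. This produces an eigen-decomposition of $\nabla f(\X^*)$ of the required form. There is no serious obstacle here; the only point to be slightly careful about is ruling out the degenerate case $\lambda_i=0$ (handled by restricting to $i\leq r^*$) and the fact that $\nabla f(\X^*)\in\mbS^n$ is guaranteed by the standing assumption so that a full orthonormal eigen-decomposition exists.
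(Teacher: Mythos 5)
Your argument is correct: first-order optimality makes $\X^*$ a minimizer of $\Y\mapsto\langle\nabla f(\X^*),\Y\rangle$ over $\mS_n$, whose minimum value is $\lambda_n(\nabla f(\X^*))$, and the Rayleigh-quotient equality argument with the strictly positive weights $\lambda_i$ then forces each $\v_i$ into the bottom eigenspace of $\nabla f(\X^*)$ — this is essentially the standard proof of this fact. Note that the paper itself does not prove the lemma but imports it from \cite{garberLowRankSmooth}, and your proof coincides with the argument used there, so there is nothing to add.
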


A motivation for the plausibility of Assumption \ref{ass:strictcomp} is that it implies the robustness of the low-rank of optimal solutions to Problem \eqref{eq:Model}, to small perturbations in the problem's parameters. See detailed discussions in \cite{garberLowRankSmooth, garberStochasticLowRank, spectralFrankWolfe}. This assumption is also empirically supported by experiments conducted in \cite{garberLowRankSmooth, garberStochasticLowRank, spectralFrankWolfe} and also in this current work, see Section \ref{section:experiments}. We note that strict complementarity has also played an instrumental role in several recent works which used it to prove linear convergence rates for several first-order methods, without strong convexity \cite{zhou2017unified, drusvyatskiy2018error, garberRankOne, spectralFrankWolfe}.

The strict complementarity assumption is central to our theoretical analysis. It revolves around showing that our low-rank MEG method approximates sufficiently well  the steps of its exact full-rank counterpart. Hence, we can derive nearly matching convergence rates using only low-rank computations. Since, as evident from Assumption \ref{ass:strictcomp}, strict complementarity is a local property of a certain optimal solution, we naturally also require a ``warm-start'' initialization assumption so that our method starts in a region in which we can exploit this property. While we do not have a formal argument regarding the necessity of either assumption, in Section \ref{section:experiments} we bring empirical evidence that both demonstrates the good performance of our method when Assumption \ref{ass:strictcomp} indeed seems to hold, and that when strict complementarity does not hold, our method may exhibit poor convergence.

Problem \eqref{eq:Model} (and other close variants of) has received significant attention within the optimization, statistics, and machine learning communities in recent years, with many attempts to provide efficient algorithms for large scale instances under a variety of assumptions. Our interest in this work is to study efficient implementations of classical proximal gradient methods based on the von Neumann entropy since, at least in terms of gradient complexity, these are considered the methods of choice among first-order methods for optimization over the spectrahedron, as discussed above. We refer the interested reader to \cite{garberLowRankSmooth, garberStochasticLowRank, spectralFrankWolfe} for comprehensive discussions about alternative methods/approaches and more related work. 

We note that two recent works \cite{AllenLi2017-FTCL, Carmon2019ARS} have also considered certain approximations of MEG-type updates, which rely on a very different approach of applying low-rank randomized sketching techniques. These works provide efficient MEG variants for \textit{online linear optimization} over the spectrahedron (also known as Matrix Multiplicative Weights), which can in turn be used to solve certain semidefintie programs with affine constraints (SDP) \cite{Carmon2019ARS}. While these SDP algorithms do not rely on additional assumptions, such as our Assumption \ref{ass:strictcomp} or a ``warm-start'' initialization, they are not applicable to general smooth convex objectives as we consider in this work, and they generally do not produce low-rank solutions even when a low-rank solution exits (they typically produce a solution with rank that scales with $1/\epsilon^2$, where $\epsilon$ is the target accuracy, see \cite{Carmon2019ARS}). Additional related works on fast SDP solvers include nearly-linear runtime algorithms for solving positive SDPs \cite{peng2012faster, allen2016using}.
 Another recent relevant work is \cite{scalableSDP}, which combines matrix sketching  techniques with a primal-dual method to solve certain semidefinite programs using a computationally-efficient low-rank representation with minimal storage requirements.


The rest of this paper is organized as follows.
\begin{itemize}
\item
In Section \ref{sec:perm} we provide preliminaries including a fully detailed review of the matrix exponentiated gradient method. Importantly, in this section we also discuss the fundamental challenges of extending the results in \cite{garberLowRankSmooth, garberStochasticLowRank} which consider a Euclidean setup, to our von Neumann entropy-based Bregman distance setup.

\item
In Section \ref{sec:lowrankMEG} we present in full detail our low-rank MEG variant and give basic convergence results. These convergence results depend on how well the low-rank sequences approximates the steps of the standard MEG method. In this section we also present an efficient way to compute certificates that can verify that these approximations are well-behaved and by that certify the correct convergence of the method.

\item
In Section \ref{sec:main_tools} we present our main technical tool towards proving our main results: that in a certain ball around an optimal low-rank solution, under the strict complementarity assumption, our low-rank MEG variant indeed approximates sufficiently well the steps of the standard MEG method.

\item
In Section \ref{section:smoothCase} we prove our first main result:  local convergence of the low-rank MEG method with \textit{deterministic} updates with rate $O(1/t)$, under strict complementarity.

\item
In Section \ref{section:stochasticCase} we prove our second main result:  local convergence of the low-rank MEG method with \textit{stochastic} updates with rate $O(1/\sqrt{t})$, under strict complementarity.

\item
In Section \ref{section:experiments} we present numerical experiments in support of our theoretical findings. These experiments support the plausibility of Assumption \ref{ass:strictcomp}, demonstrate the efficient convergence of our low-rank MEG method under Assumption \ref{ass:strictcomp}, and finally, suggest that Assumption \ref{ass:strictcomp} might be necessary for the efficient and reliable convergence of our low-rank MEG method.
\end{itemize}

\section{Preliminaries}\label{sec:perm}

\subsection{Notation}

Throughout this work we use the following notations. 
For real matrices we let $\Vert\cdot\Vert_2$ denote the spectral norm (i.e., the largest singular value), we let $\Vert\cdot\Vert_*$ denote the nuclear norm (i.e., the sum of all singular values),
and we let we let $\Vert\cdot\Vert_F$ denote the Frobenius norm.
For any $\X,\Y\in\mathbb{S}^{n}$, we denote the standard matrix inner product as $\langle\X,\Y\rangle=\X\bullet\Y=\trace(\X\Y)$.
For a real symmetric matrix $\X\in\mathbb{S}^{n}$, we let
$\lambda_i(\X)$ denote its $i$th largest eigenvalue. 
We denote $\mathbb{S}^n_{++}$ to be the set of all symmetric $n\times n$ positive-definite matrices.
For a matrix $\X\in\mathbb{S}^n_{++}$ with an eigen-decomposition $\X=\V \mathrm{diag}(\lambda_1,\ldots,\lambda_n)\V^{\top}$ we define the matrix logarithm as $\log(\X)=\V \mathrm{diag}(\log(\lambda_1),\ldots,\log(\lambda_n))\V^{\top}$ and the matrix exponential as $\exp(\X)=\V \mathrm{diag}(\exp(\lambda_1),\ldots,\exp(\lambda_n))\V^{\top}$. 

\subsection{Bregman distances, the von Neumann entropy and the Matrix Exponentiated Gradient algorithm}

\begin{definition}[Bregman distance]
Let $\omega$ be a real-valued and proper function over a nonempty, closed and convex subset of the parameter domain that is continuously differentiable and $\alpha$-strongly convex w.r.t. some norm, for some $\alpha > 0$.
Then the Bregman distance is defined by: $B_{\omega}(\x,\y)=\omega(\x)-\omega(\y)-\left<\x-\y,\nabla\omega(\y)\right>$.
\end{definition}

For the problem under consideration in this paper, Problem \eqref{eq:Model}, where the parameter domain is the set of symmetric positive definite matrices, the standard strongly convex function that is considered is based on the von Neumann entropy, and is given by 
\begin{align*}
\omega(\X)=\trace(\X\log(\X)-\X).
\end{align*}

The Bregman distance corresponding to the von Neumann entropy, has the form of 
\begin{align*}
B_{\omega}(\X,\Y)& = \trace(\X\log(\X)-\X\log(\Y)-\X+\Y).
\end{align*}

In this paper, we are interested in matrices in $\mathcal{S}_n$, for which the trace is equal to $1$. Therefore, for any $\X,\Y\in\mathcal{S}_n$, the Bregman distance reduces to
\begin{align} \label{def:bregmanInSpectrahedron}
B_{\omega}(\X,\Y)& = \trace(\X\log(\X)-\X\log(\Y)).
\end{align}
In order for this definition to include symmetric positive semi-definite matrices, we use the convention $0\log(0):=0$.

For simplicity, from now on we will denote the Bregman distance corresponding to the von Neumann entropy as $\breg(\X,\Y):=B_{\omega}(\X,\Y)$.

It is known that  the von Neumann entropy is $1$-strongly convex with respect to the nuclear-norm over $\mS_n$ (see for instance \cite{strongconvexity}). This implies that
\begin{align} \label{ineq:strongConvexityOfBregmanDistance}
\breg(\X,\Y)\ge\frac{1}{2}\Vert \X-\Y\Vert_*^2.
\end{align}

Another important property of Bregman distances is the following three point identity (see \cite{beckMD}):
\begin{align} \label{lemma:threePointLemma}
B_{\omega}(\X,\Y)+B_{\omega}(\Y,\Z)-B_{\omega}(\X,\Z)=\langle \nabla\omega(\Z)-\nabla\omega(\Y),\X-\Y\rangle.
\end{align}

Bregman distances are central to Mirror-Decent algorithms, where the update step for some $\Z\in\mathcal{S}_n$ can be written, in our case, as:
\begin{align} \label{eq:MDGeneralUpdate}
 \Z_{+}&=\argmin_{\X\in\mathcal{S}_n}\left\{\langle\eta\nabla f(\Z),\X\rangle+\breg(\X,\Z)\right\} \nonumber \\
 &=\argmin_{\X\in\mathcal{S}_n}\left\{\langle\eta\nabla f(\Z)-\nabla\omega(\Z),\X\rangle+\omega(\X)\right\},
\end{align}
where $\eta > 0$ is the step-size.

In the case of the von Neumann entropy, this update is equivalent to an update of the following form (see \cite{BregmanMatrices}):
\begin{align} \label{eq:MDvonNeumannUpdate}
\Z_{+}=\frac{1}{b}\exp(\log(\Z)-\eta\nabla f(\Z)),
\end{align} 
where $b:=\trace(\exp(\log(\Z)-\eta\nabla f(\Z)))$.

For stochastic optimization, the gradient is replaced with an unbiased estimator of the gradient, $\widehat{\nabla}\in\mathbb{S}^n$, resulting in the following update:
  \begin{align} \label{eq:MDvonNeumannStochasticUpdate}
\Z_{+}=\frac{1}{b}\exp(\log(\Z)-\eta\widehat{\nabla}),
\end{align} 
where $b:=\trace(\exp(\log(\Z)-\eta\widehat{\nabla}))$.

The update rules \eqref{eq:MDvonNeumannUpdate} and \eqref{eq:MDvonNeumannStochasticUpdate} are simply the Matrix Exponentiated Gradient (MEG) method and its stochastic counterpart \cite{BregmanMatrices}, which are given in complete form in Algorithm \ref{alg:exactMEG}.

\begin{algorithm}
	\caption{Matrix Exponentiated Gradient for Problem \eqref{eq:Model}}\label{alg:MEG}
	\label{alg:exactMEG}
	\begin{algorithmic}
		\STATE \textbf{Input:} $\{\eta_t\}_{t\geq 1}$ - sequence of (positive) step-sizes
		\STATE $\Z_1$ - an arbitrary point in $\mS_{n}\cap\mathbb{S}^n_{++}$
		\FOR{$t = 1,2,...$} 
            \STATE \ $ \Y_t=\exp(\log (\Z_{t})-\eta_{t}\nabla f(\Z_{t}))$ (for deterministic optimization)
            \\ \OR
            \STATE \ $ \Y_t=\exp(\log (\Z_{t})-\eta_{t}\widehat{\nabla}_t)$ (for stochastic optimization)
			\STATE $b_{t}=\trace(\Y_t)$       
			\STATE $ \Z_{t+1} = \Y_{t}/b_t$
        \ENDFOR
	\end{algorithmic}
\end{algorithm}

The following result is standard and states the convergence rates of Algorithm \ref{alg:exactMEG} with either full or stochastic gradients (see for example \cite{beckOptimizationBook,sebastienMD}).
\begin{theorem}[Convergence of Algorithm \ref{alg:exactMEG}]
When used with deterministic gradients and fixed step-size $\eta = 1/\beta$, the iterates $\{\X_t\}_{t\geq 1}$ of Algorithm \ref{alg:exactMEG} satisfy: $f(\X_t) -f^* = O\left({\frac{\beta\log{n}}{t}}\right)$ for all $t\geq 1$. Alternatively, when used with stochastic gradients and step-size $\eta_t = \frac{\sqrt{\log{n}}}{G\sqrt{t}}$, the ergodic sequence $\{\bar{\X}_t\}_{t\geq 1}$, where $\bar{\X}_t := \frac{1}{t}\sum_{i=1}^t\X_t$, satisfy: $\E[f(\bar{\X}_t) - f^*] = O\left({\frac{G\sqrt{\log{n}}}{\sqrt{t}}}\right)$ for all $t\geq 1$, where $G \geq \max_{t}\Vert{\widehat{\nabla}_t}\Vert_2$.
\end{theorem}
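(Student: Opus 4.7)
The plan is to prove both statements via the standard Bregman mirror-descent analysis, exploiting the equivalence of the MEG update \eqref{eq:MDvonNeumannUpdate} with the proximal form \eqref{eq:MDGeneralUpdate}. Writing $\g_t$ for either $\nabla f(\Z_t)$ (deterministic) or $\widehat{\nabla}_t$ (stochastic), the first-order optimality condition for the argmin defining $\Z_{t+1}$, combined with the three-point identity \eqref{lemma:threePointLemma}, yields for every $\X\in\mS_n$ the workhorse one-step inequality
$$\eta_t\langle \g_t,\Z_{t+1}-\X\rangle \leq B(\X,\Z_t) - B(\X,\Z_{t+1}) - B(\Z_{t+1},\Z_t).$$
Both parts of the theorem then reduce to adding $\eta_t\langle \g_t,\Z_t-\Z_{t+1}\rangle$ to both sides, bounding the two resulting residuals differently in the two regimes, telescoping the Bregman terms, and invoking the standard initialization $\Z_1 = \Id/n$, for which a direct computation with \eqref{def:bregmanInSpectrahedron} gives $B(\X^*,\Z_1)\leq \log n$.

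For the deterministic smooth case with $\eta_t = 1/\beta$ I would invoke the descent lemma induced by $\beta$-smoothness w.r.t.\ $\|\cdot\|_*$, namely $f(\Z_{t+1})\leq f(\Z_t)+\langle\nabla f(\Z_t),\Z_{t+1}-\Z_t\rangle+\tfrac{\beta}{2}\|\Z_{t+1}-\Z_t\|_*^2$, and use strong convexity \eqref{ineq:strongConvexityOfBregmanDistance} in the form $-B(\Z_{t+1},\Z_t)\leq -\tfrac12\|\Z_{t+1}-\Z_t\|_*^2$. Setting $\X=\X^*$ in the workhorse inequality and applying convexity $\langle\nabla f(\Z_t),\Z_t-\X^*\rangle\geq f(\Z_t)-f^*$, the two quadratic terms in $\|\Z_{t+1}-\Z_t\|_*^2$ cancel exactly, and one obtains
$$f(\Z_{t+1})-f^*\leq \beta\bigl(B(\X^*,\Z_t)-B(\X^*,\Z_{t+1})\bigr).$$
Choosing $\X=\Z_t$ instead shows monotonicity $f(\Z_{t+1})\leq f(\Z_t)$, so telescoping and averaging over $t=1,\dots,T$ yields $f(\Z_T)-f^* \leq \beta B(\X^*,\Z_1)/T = O(\beta\log n/T)$.

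For the stochastic case I would instead bound the second residual by Fenchel--Young on the dual pair $(\|\cdot\|_*,\|\cdot\|_2)$, $\langle \g_t,\Z_t-\Z_{t+1}\rangle \leq \tfrac{\eta_t}{2}\|\g_t\|_2^2 + \tfrac{1}{2\eta_t}\|\Z_t-\Z_{t+1}\|_*^2$, so that once again the $\tfrac12\|\Z_{t+1}-\Z_t\|_*^2$ coming from strong convexity is cancelled. Setting $\X=\X^*$, using $\|\g_t\|_2\leq G$, taking expectations with $\E[\widehat{\nabla}_t\mid\Z_t]=\nabla f(\Z_t)$ and invoking convexity of $f$ on the resulting linear term gives
$$\eta_t\E[f(\Z_t)-f^*] \leq \E[B(\X^*,\Z_t)-B(\X^*,\Z_{t+1})] + \frac{\eta_t^2 G^2}{2}.$$
Summing, using Jensen's inequality to pass to the ergodic iterate $\bar{\X}_T$, and plugging in $\eta_t = \sqrt{\log n}/(G\sqrt{t})$ delivers the claimed $O(G\sqrt{\log n/T})$ rate. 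The only subtle point in the whole argument is verifying that the constants in strong convexity \eqref{ineq:strongConvexityOfBregmanDistance} and in Fenchel--Young match so that the two $\|\Z_{t+1}-\Z_t\|_*^2$ residuals cancel exactly, and (in the stochastic case) being careful with the averaging weights when converting the weighted inequality into a bound on the uniform ergodic mean; once these are in place, the rest is routine telescoping.
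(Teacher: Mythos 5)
The paper does not actually prove this theorem: it is invoked as a standard result with citations, so the natural benchmark is the machinery the paper itself deploys for the inexact variants (Theorems \ref{thm:converganceSmooth} and \ref{thm:StochasticConvergence}), which is exactly the machinery you use --- optimality of the proximal form \eqref{eq:MDGeneralUpdate}, the three-point identity \eqref{lemma:threePointLemma}, and the $1$-strong convexity \eqref{ineq:strongConvexityOfBregmanDistance} to cancel $\breg(\Z_{t+1},\Z_t)$ against the smoothness (resp.\ H\"older/Fenchel--Young) residual, together with $\breg(\X^*,\tfrac{1}{n}\I)\le\log n$. Your deterministic half is correct; moreover, by adding the monotonicity step (taking $\X=\Z_t$ in the workhorse inequality) you obtain the per-iterate guarantee $f(\Z_t)-f^*=O(\beta\log n/t)$ that the statement actually claims, which the averaging argument alone (as in Theorem \ref{thm:converganceSmooth} specialized to exact updates) would only give for $\min_{s\le t}f(\Z_s)$.

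The stochastic half, however, has a genuine gap at precisely the point you flag but do not resolve. Your one-step bound is $\eta_t\,\E[f(\Z_t)-f^*]\le\E[\breg(\X^*,\Z_t)-\breg(\X^*,\Z_{t+1})]+\tfrac12\eta_t^2G^2$ with the \emph{time-varying} $\eta_t=\sqrt{\log n}/(G\sqrt{t})$. Summing and ``using Jensen'' does not produce a bound on the uniform ergodic mean $\bar{\X}_T=\tfrac1T\sum_{t}\X_t$: the left-hand side is an $\eta_t$-weighted sum, so at best you bound the $\eta_t$-weighted average of the errors, and since $\sum_t\eta_t^2=\Theta(\log n\,\log T/G^2)$ even that carries an extra $\log T$ factor. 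The standard repair --- divide by $\eta_t$ before summing so the Bregman terms telescope up to $\sum_t(\eta_t^{-1}-\eta_{t-1}^{-1})\breg(\X^*,\Z_t)$ --- requires a uniform bound $\sup_t\breg(\X^*,\Z_t)\le D^2$, and this is exactly what the von Neumann entropy does not supply on $\mS_n$: its Bregman ``diameter'' is infinite, and the only a priori control on the iterates ($\lambda_n(\Z_{t+1})\ge e^{-2\eta_t G}\lambda_n(\Z_t)$, via Weyl and Golden--Thompson) lets the available bound on $-\trace(\X^*\log\Z_t)$ grow like $2G\sum_{s<t}\eta_s=\Theta(\sqrt{t\log n})$, which makes that correction sum grow linearly in $T$ and the resulting bound vacuous. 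The clean fix, consistent both with the references cited for this theorem and with the paper's own Theorem \ref{thm:StochasticConvergence}, is a fixed horizon-tuned step $\eta=\sqrt{2\log n}/(G\sqrt{T})$: then your cancellation and telescoping go through verbatim and Jensen on the uniform average yields $\E[f(\bar{\X}_T)]-f^*\le\sqrt{2}\,G\sqrt{\log n}/\sqrt{T}$. With $\eta_t\propto 1/\sqrt{t}$ as literally stated one needs suffix/weighted averaging or a doubling trick; the claimed rate does not follow from the argument as you wrote it.
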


It is important to note that from a computational perspective, the most expensive step in Algorithm \ref{alg:exactMEG} is the computation of the matrix exponential in the update of the matrix variable $\Y_t$ on each iteration. Computing a matrix exponential requires in worst case a full-rank SVD of the input matrix.

\subsection{The challenge of low-rank MEG updates}\label{sec:challenges}

Since our work is mostly inspired by the recent works \cite{garberLowRankSmooth, garberStochasticLowRank} which developed similar local convergence results for Problem \eqref{eq:Model} using Euclidean gradient methods that only apply low-rank SVD computations, we first discuss the differences between the Euclidean and our non-Euclidean Bregman setups, and we emphasize and clarify the difficulty of extending the results \cite{garberLowRankSmooth, garberStochasticLowRank} to the von Neumann entropy-based Bregman setup.

We first recall the Euclidean projected gradient mapping for the spectrahedron $\mathcal{S}_n$, given by $\Pi_{\mathcal{S}_n}[\X-\eta\nabla{}f(\X)]$ for some $\X\in\mathcal{S}_n$, where $\Pi_{\mathcal{S}_n}[\cdot]$ denotes the Euclidean projection onto $\mathcal{S}_n$. Let $\Y=\sum_{i=1}^n\lambda_i v_i v_i^{\top}$ denote the eigen-decomposition of the matrix $\Y=\X-\eta\nabla{}f(\X)$. Its Euclidean projection onto the spectrahedron is given by
\begin{align}\label{eq:euclidProj}
\Pi_{\mathcal{S}_n}[\Y]=\sum_{i=1}^n\max\{0,\lambda_i-\lambda\} v_i v_i^{\top},
\end{align}
where $\lambda\in\reals$ is the unique scalar satisfying  $\sum_{i=1}^n\max\{0,\lambda_i-\lambda\}=1$. 

We can see that the Euclidean projection has a thresholding effect on the eigenvalues, i.e., all eigenvalues below or equal to $\lambda$ become zero after the projection operation. This unique property is central to the analysis in \cite{garberLowRankSmooth, garberStochasticLowRank} and is used to establish that in the proximity of an optimal solution which satisfies an eigen-gap assumption such as the strict complementarity condition (Assumption \ref{ass:strictcomp}), the Euclidean projected gradient mapping always results in a low-rank matrix, and thus, as evident from \eqref{eq:euclidProj}, can be computed exactly using only a low-rank SVD computation.

This unique property and consequence of the Euclidean projected gradient mapping does not hold anymore for the von Neumann entropy-based MEG mapping given in \eqref{eq:MDvonNeumannUpdate}. In particular we see that almost everywhere in $\mS_n$ (except for the points for which this mapping results in a low-rank matrix, such as low-rank optimal solutions), due to the matrix logarithm and exponent, this mapping is defined only for full-rank matrices and also results in full-rank matrices. Thus, in stark contrast to the Euclidean case, we cannot expect that in any proximity of a low-rank optimal solution that this mapping could be computed exactly via a low-rank SVD. Thus, our last hope is that the MEG mapping \eqref{eq:MDvonNeumannUpdate} could be sufficiently approximated (in a suitable sense) using only a low-rank SVD computation.

Thus, when attempting to derive results in the spirit of  \cite{garberLowRankSmooth, garberStochasticLowRank} for the MEG method, our challenge is two-folded: i. we need to suggest a way to approximate the steps of MEG, which generally requires to store and manipulate full-rank matrices, using only low-rank SVD computations, and ii. we need to be able to establish that the resulting approximation errors can indeed be properly controlled to guarantee the convergence of the new approximated updates.

\section{The Approximated MEG Method with Low-Rank Updates}\label{sec:lowrankMEG}

Our approach of replacing the exact MEG updates in Algorithm \ref{alg:MEG} with approximated low-rank updates is fairly straightforward. Instead of computing the mapping $\Y = \exp(\log\X-\eta\nabla{}f(\X))$ for some $\X\in\mS_n$ (e.g., in case of deterministic updates), we compute explicitly only its best rank-$r$ approximation $\Y^r$ for some parameter $r$, which is simply given by the top $r$ components in the eigen-decomposition of $\Y$, and then slightly perturb it in a way that sets all eigenvalues in the places $r+1$,\dots,$n$ to some small fixed value, so that we end up with a positive definite matrix. We first show that such consecutive updates could indeed be performed by computing only a rank-$r$ SVD and that the iterates of the proposed method could be stored in memory in the form of low-rank matrices.

Concretely, focusing on deterministic updates (the stochastic case follows the same reasoning with the obvious changes), we define a low-rank MEG update in the following way.
\begin{definition}[Low-rank MEG update] \label{def:lowRankMEGUpdate}
The (deterministic) low-rank MEG update to some $\X\in\mathcal{S}_n$ with step-size $\eta >0$ is given by
\begin{align} \label{eq:LowRankMDvonNeumannUpdate}
\X_+:=(1-\varepsilon)\frac{\Y^r}{a}+\frac{\varepsilon}{n-r}(\I-\V^r{\V^r}^{\top})
\end{align}
for some $\varepsilon\in[0,1]$, where $\Y:=\exp(\log(\X)-\eta\nabla f(\X))$ admits the  eigen-decomposition $\Y=\V\Lambda\V^{\top}$, and $\Y^r=\V^r\Lambda^r{\V^r}^{\top}$ is its rank-$r$ approximation, i.e.,  the top $r$ components in the eigen-decomposition, and $a:=\sum_{i=1}^{r} \lambda_i(\Y)$ is its trace.
\end{definition} 

The complete description of our low-rank matrix exponentiated gradient algorithm which is based on the low-rank updates in Definition \ref{def:lowRankMEGUpdate}, is given below as Algorithm \ref{alg:LRMD}.

We now discuss how, perhaps excluding gradient computations, the steps of Algorithm  \ref{alg:LRMD}, i.e., the update in  \eqref{eq:LowRankMDvonNeumannUpdate}, could be computed efficiently in terms of storage and runtime.

We first note that given the eigen-decomposition of $\Y^r$ in \eqref{eq:LowRankMDvonNeumannUpdate}, i.e., the matrix $\V^r\in\reals^{n\times r}$ and the diagonal matrix $\Lambda^r\in\reals^{r\times r}$ (note that only the diagonal elements are needed),  in order to store $\X_+$ in memory (in factored form), indeed the only dense matrix that needs to be stored is $\V^r$. 

Thus, it remains to discuss how $\V^r,\Lambda^r$ could be computed efficiently in terms of runtime and storage. There are various approaches for this task. In particular, efficient methods for high-dimensional matrices rely on iterative approximation algorithms (whose error could be made arbitrarily small) which are standard procedures in numerical linear algebra and are used in numerous algorithms and research papers. Accounting for the exact runtimes of such methods and for the errors incurred by the resulting approximations is beyond the scope of this paper. Below we outline one such popular and simple approach and explain its principled efficient implementation.

Since the exponentiation of a symmetric matrix only changes the eigenvalues (takes their exponent) and leaves the eigenvectors unchanged, in order to compute $\V^r,\Lambda^r$, it suffices to compute the matrices $\V^r,\Sigma^r$ which store the top $r$ components in the eigen-decomposition of the matrix $\M=\log(\X)-\eta\nabla{}f(\X)$. One approach towards computing $\V^r,\Sigma^r$ is as follows. The main step is to compute some $\W\in\reals^{n\times r}$ with orthonormal columns such that $\W\W^{\top} \approx \V^r\V^{r\top}$. Then, we can approximate $\V^r,\Sigma^r$ by computing the eigenvalues and eigenvectors of $\W\W^{\top}\M\W\W^{\top}\approx\V^r\V^{r\top}\M\V^r\V^{r\top} = \V^r\Sigma^r\V^{r\top}$ via the classical Rayleigh--Ritz method (see for instance \cite{jia2001analysis}). That is, we can apply the following steps:
\begin{enumerate}
\item
Compute $\W\in\reals^{n\times r}$ with orthonormal columns such that $\W\W^{\top} \approx \V^r\V^{r\top}$.
\item
Compute the eigenvalues and eigenvectros of $\W^{\top}\M\W\in\reals^{r\times r}$, $\{(\lambda_i,\mathbf{v}_i)\}_{i=1}^r\subset\reals\times\reals^r$.
\item
Return the the eigenvalues and eigenvectors of $\W\W^{\top}\M\W\W^{\top}$, $\{(\lambda_i,\W\mathbf{v}_i)\}_{i=1}^r$.
\end{enumerate}
Given $\W^{\top}\M\W$, step 2 requires an eigen-decomposition of a $r\times r$ matrix and thus can be done in $O(r^3)$ time. Step 3 then requires additional $O(r^2n)$ time to compute the products $\W\mathbf{v}_i, i=1,\dots,r$. It thus remains to be discussed how to compute such matrix $\W$ and  the corresponding matrix $\W^{\top}\M\W$ efficienatly.

Let us recall that $\M=\log(\X)-\eta\nabla{}f(\X)$. Now, suppose $\X$ itself is given in factored form such as in \eqref{eq:LowRankMDvonNeumannUpdate}, i.e., 
$\X=(1-\varepsilon_{-})\widetilde{\X}+\frac{\varepsilon_{-}}{n-r}(\I-\V_{-}^r{\V_{-}^r}^{\top})$,
where $\widetilde{\X}\in\mathcal{S}_n$ is such that $\rank(\widetilde{\X})=r$ and $\varepsilon_{-}\in[0,1]$, and the eigen-decomposition of $\widetilde{\X}$ is $\widetilde{\X}=\V_{-}^r\Lambda_{-}^r{\V_{-}^r}^{\top}$, where $\V_-^r\in\reals^{n\times r}, \Lambda_-^r\in\reals^{r\times r}$. Then, the eigen-decomposition of $\X$ can be written as
\begin{align}\label{eq:eigenFastUpdate} 
\X&=(1-\varepsilon_{-})\V_{-}^r\Lambda_{-}^r{\V_{-}^r}^{\top}+\frac{\varepsilon_{-}}{n-r}\left(\I-\V_{-}^r{\V_{-}^r}^{\top}\right). 
\end{align}
Thus,
\begin{align*}
\log(\X)=\V_{-}^r\log\left((1-\varepsilon_{-})\Lambda_{-}^r\right){\V_{-}^r}^{\top}+\log\left(\frac{\varepsilon_{-}}{n-r}\right)\left(\I-\V_{-}^r{\V_{-}^r}^{\top}\right). 
\end{align*}

Note that given the factorization of  the rank-$r$ initialization $\X_0$ to Algorithm \ref{alg:LRMD}, the first iterate of the Algorithm, $\X_1$ can also be written in similar form to \eqref{eq:eigenFastUpdate}.
Thus, given $\X$ in the form \eqref{eq:eigenFastUpdate} and  some $\W\in\reals^{n\times r}$ as discussed above, computing $\W^{\top}\log(\X)\W$ requires $O(r^2n)$ time. Now, if the gradient $\nabla{}f(\X)$ is a dense matrix without particular structure, then computing $\W^{\top}\M\W=\W^{\top}(\log(\X)-\eta\nabla{}f(\X))\W$ requires $O(rn^2)$ time. However, in many applications the gradient admits favorable properties (e.g., it is the sum of a low-rank matrix given in factored form plus a sparse matrix), and then computing $\W^{\top}\nabla{}f(\X)\W$ could be much more efficient, and so the overall time to compute $\W^{\top}\M\W$ explicitly can be much better than the worst case $O(rn^2)$.

We now discuss the remaining first step of computing some $\W\in\reals^{n\times r}$ with orthonormal columns such that $\W\W^{\top}\approx\V^r\V^{r\top}$, where we recall that $\V^r$ stores as columns the top $r$ eigenvectors of $\M = \log(\X) - \eta\nabla{}f(\X)$. This could be carried out via fast iterative methods such as subspace iteration \cite{saad2011numerical} (aka orthogonal iteration \cite{golub2013matrix}), or even faster Lanczos-type methods (see for instance \cite{Musco15} for a recent study of the such methods). The subspace iteration method for example, which is perhaps the simplest, starts with some orthonormal $\W\in\reals^{n\times r}$ and repeatedly apply the steps: 1. compute the product $\M\W$ and 2. compute a QR-factorization of $\M\W$ to obtain  a new matrix $\W$ with orthonormal columns, that will be used in the next iteration. Thus, each such iteration takes $O(n^2r)$ time in worst-case, where again as discussed above, this could be improved if multiplying  $\M$ with $\W$ could be performed faster than $O(n^2r)$ (in particular, computing the QR-factorization of $\M\W$ requires only $O(r^2n$) time). The number of iterations to reach $\gamma$-accuracy, for instance in the sense that $\Vert{\W\W^{\top}-\V^r\V^{r\top}}\Vert \leq \gamma$, for a given $\gamma >0$, is proportional to $1/\gamma$ in worst-case (and in particular not explicitly dependent on the dimension $n$), but can be significantly faster when the eigengap $\lambda_{r}(\M) - \lambda_{r+1}(\M)$ is sufficiently large, see for instance  \cite{Musco15, golub2013matrix}. 

Thus, to conclude this part, aside from gradient computations, Algorithm \ref{alg:LRMD} could be implemented so that it requires only $O(nr)$ memory, and (treating the approximation error $\gamma$ in the computation of $\W$ above as constant for simplicity)  runtime per iteration proportional to $r^2n$ plus the time to multiply the gradient $\nabla{}f$ with a $n\times r$ matrix, which  in worst-case amounts to $O(rn^2)$.

\begin{algorithm}
	\caption{Low Rank Matrix Exponentiated Gradient for Problem \eqref{eq:Model}}
	\label{alg:LRMD}
	\begin{algorithmic}
		\STATE \textbf{Input:} $\{\eta_t\}_{t\ge1}$ - sequence of (positive) step-sizes, $\lbrace\varepsilon_t\rbrace_{t\ge 0}\subset[0,1]$ - sequence of approximation parameters, $r\in\{1,2,\dots,n-1\}$ - SVD rank parameter, initialization matrix -  $\X_0\in\mS_n$ such that $\rank(\X_0) = r$
		\STATE \textbf{Initialization:} $\X_1=(1-\varepsilon_0)\X_0+\frac{\varepsilon_0}{n}\I$
		\FOR{$t = 1,2,...$} 
            \STATE \ $ \Y_t=\exp(\log (\X_{t})-\eta_{t}\nabla f(\X_{t}))$ (for deterministic optimization; not to be explicitly computed)
            \\ \OR
            \STATE \ $ \Y_t=\exp(\log (\X_{t})-\eta_{t}\widehat{\nabla}_{t})$ (for stochastic optimization;  not to be explicitly computed) \COMMENT{$\widehat{\nabla}_{t}$ is unbiased estimator for  $\nabla{}f(\X_t)$}
            \STATE $\Y_t^r=\V^r\Lambda^r{\V^r}^{\top}$ - rank-$r$ eigen-decomposition of $\Y_t$
			\STATE $a_{t}=\trace(\Y_t^r)$
            \STATE $\X_{t+1}=(1-\varepsilon_t)\frac{\Y_t^r}{a_t}+\frac{\varepsilon_t}{n-r}(\I-\V^r{\V^r}^{\top})$
        \ENDFOR
	\end{algorithmic}
\end{algorithm}

\subsection{Convergence of approximated sequences and computing certificates}\label{section:localConvergence}
As a starting point for our convergence analysis of Algorithm \ref{alg:LRMD}, we first state and prove the most general convergence results which do not rely on any additional assumption (such as strict complementarity or ``warm-start'' initialization). These results, which will also serve as the basis for all of our following theoretical derivations, naturally depend on how well the low-rank updates applied in Algorithm \ref{alg:LRMD} approximate their exact counterparts in Algorithm \ref{alg:exactMEG}. While without further assumptions it is not possible to a-priori guarantee much for these approximations, we conclude this section by providing computable certificates that can ensure during runtime that these approximations are indeed properly bounded, and hence, can be used to certify the correct convergence of the method in practice.

We note that the following two convergence theorems are quite generic and apply to any inexact update sequence $\lbrace \X_t\rbrace_{t\ge 1}$ which approximates the steps of the exact MEG method (Algorithm \ref{alg:exactMEG}), and do not directly rely on the updates applied in Algorithm \ref{alg:LRMD}. 
In particular, the first term in the RHS of the bound in Theorem \ref{thm:converganceSmooth} is simply the standard convergence rate of the mirror-decent method when $f(\cdot)$ is smooth (see Section 10.9 in \cite{beckOptimizationBook}), and the first two terms in the RHS of the bound in Theorem \ref{thm:StochasticConvergence} correspond to the standard rate of the stochastic mirror-decent method (see \cite{sebastienMD}). The additional terms in both bounds  are the error terms which arise from the inexact updates.

\begin{theorem}\label{thm:converganceSmooth}[Convergence with deterministic updates] 
Let $\lbrace \X_t\rbrace_{t\geq 1}$ be a sequence of points in $\mS_n\cap\mbS^n_{++}$. Let $\{\Z_t\}_{t\geq 1}$ be a sequence such that $\Z_1=\X_1$, and for all $t\geq 1$, $\Z_{t+1}$ is the accurate MEG update to $\X_t$ as defined in \eqref{eq:MDvonNeumannUpdate} with step-size $\eta_t=\eta\le\frac{1}{\beta}$. Then, for any $T\geq 1$ we have that 
\begin{align*}
\min_{1\le t\le {T}}f(\X_t) -f(\X^*) & \le \frac{1}{\eta T}\breg(\X^*,\X_1)+\frac{1}{\eta T}\sum_{t=1}^{T-1}\left(\breg(\X^*,\X_{t+1})-\breg(\X^*,\Z_{t+1})\right)
\\ & \ \ \ \ +\frac{2}{T}\sum_{t=1}^{T-1}\Vert\nabla f(\X_{t+1})\Vert_{2}\sqrt{\breg(\Z_{t+1},\X_{t+1})}.
\end{align*}
\end{theorem}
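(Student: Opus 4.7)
The plan is to combine the standard smooth mirror-descent one-step analysis for the ``virtual'' exact iterate $\Z_{t+1}$ with a convexity-based perturbation bound that transfers the function-value guarantee from $\Z_{t+1}$ to the actual inexact iterate $\X_{t+1}$, and then to telescope in a way that isolates the inexactness gap as a correction term.

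First I would carry out the usual smooth mirror-descent one-step bound, viewing $\X_t$ as the current iterate and $\Z_{t+1}$ as the next (exact) MEG step from $\X_t$. Using $\beta$-smoothness of $f$ in the spectral/nuclear norm pair together with convexity of $f$ at $\X_t$ gives
\begin{align*}
f(\Z_{t+1})-f(\X^*)\le \langle\nabla f(\X_t),\Z_{t+1}-\X^*\rangle+\frac{\beta}{2}\Vert\Z_{t+1}-\X_t\Vert_*^2.
\end{align*}
The first-order optimality of the mirror-descent subproblem \eqref{eq:MDGeneralUpdate} defining $\Z_{t+1}$, combined with the three-point identity \eqref{lemma:threePointLemma}, yields
\begin{align*}
\eta\langle\nabla f(\X_t),\Z_{t+1}-\X^*\rangle\le \breg(\X^*,\X_t)-\breg(\X^*,\Z_{t+1})-\breg(\Z_{t+1},\X_t),
\end{align*}
and the $1$-strong convexity inequality \eqref{ineq:strongConvexityOfBregmanDistance} together with the choice $\eta\le 1/\beta$ absorbs the smoothness term, producing the clean one-step bound $f(\Z_{t+1})-f(\X^*)\le \tfrac{1}{\eta}\bigl(\breg(\X^*,\X_t)-\breg(\X^*,\Z_{t+1})\bigr)$.

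Next I would bridge from $f(\Z_{t+1})$ to $f(\X_{t+1})$ using convexity: $f(\X_{t+1})-f(\Z_{t+1})\le\langle\nabla f(\X_{t+1}),\X_{t+1}-\Z_{t+1}\rangle$, and by the spectral/nuclear norm duality this is at most $\Vert\nabla f(\X_{t+1})\Vert_2\Vert\X_{t+1}-\Z_{t+1}\Vert_*$. Inequality \eqref{ineq:strongConvexityOfBregmanDistance} converts the nuclear-norm distance into a Bregman distance via $\Vert\X_{t+1}-\Z_{t+1}\Vert_*\le\sqrt{2\breg(\Z_{t+1},\X_{t+1})}\le 2\sqrt{\breg(\Z_{t+1},\X_{t+1})}$. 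Adding the two estimates gives, for each $t\ge 1$,
\begin{align*}
f(\X_{t+1})-f(\X^*)\le\frac{1}{\eta}\bigl(\breg(\X^*,\X_t)-\breg(\X^*,\Z_{t+1})\bigr)+2\Vert\nabla f(\X_{t+1})\Vert_2\sqrt{\breg(\Z_{t+1},\X_{t+1})}.
\end{align*}

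Finally I would sum this for $t=1,\ldots,T-1$ and apply the telescoping decomposition
\begin{align*}
\breg(\X^*,\X_t)-\breg(\X^*,\Z_{t+1}) = \bigl(\breg(\X^*,\X_t)-\breg(\X^*,\X_{t+1})\bigr)+\bigl(\breg(\X^*,\X_{t+1})-\breg(\X^*,\Z_{t+1})\bigr).
\end{align*}
The first parenthetical telescopes to $\breg(\X^*,\X_1)-\breg(\X^*,\X_T)\le\breg(\X^*,\X_1)$, while the second is exactly the inexactness correction appearing in the stated bound. Dividing by $T$ and lower-bounding the left-hand side by $\min_{1\le t\le T}(f(\X_t)-f(\X^*))$ then gives the claim. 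I expect the only real subtlety to be this telescoping decomposition, which is what permits cleanly separating the exact-method ``$\breg(\X^*,\X_1)/\eta T$'' term from the inexactness correction without leaving any residual coupling between $\X_t$ and $\Z_{t+1}$; everything else is standard MEG bookkeeping essentially identical to the analysis of Algorithm \ref{alg:exactMEG}.
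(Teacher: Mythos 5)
Your proposal follows essentially the same route as the paper's proof: the one-step bound from the optimality condition of \eqref{eq:MDGeneralUpdate} plus the three-point identity \eqref{lemma:threePointLemma}, smoothness and convexity to pass from $f(\Z_{t+1})$ to $f(\X^*)$ and then to $f(\X_{t+1})$ via the gradient inequality and H\"older, strong convexity \eqref{ineq:strongConvexityOfBregmanDistance} with $\eta\le 1/\beta$ to absorb the $\breg(\Z_{t+1},\X_t)$ term and to convert $\Vert\X_{t+1}-\Z_{t+1}\Vert_*$ into $2\sqrt{\breg(\Z_{t+1},\X_{t+1})}$, and the same add-and-subtract of $\breg(\X^*,\X_{t+1})$ to telescope. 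The only (harmless) discrepancy is the final bookkeeping, where summing the per-step bound over $t=1,\dots,T-1$ produces $T-1$ function-value terms while the stated bound divides by $T$ --- a looseness the paper's own averaging step shares and dismisses via $\Z_1=\X_1$.
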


\begin{proof}
By the definition of $\Z_{t+1}$ in \eqref{eq:MDGeneralUpdate},
\[ \Z_{t+1}=\argmin_{\X\in\mathcal{S}_n}\{\langle\eta_t\nabla f(\X_t)-\nabla\omega(\X_t),\X\rangle+\omega(\X)\}. \]

Therefore, by the optimality condition for $\Z_{t+1}$, $\forall \X\in\mathcal{S}_n$, 
\[ \langle \eta_t\nabla f(\X_t)-\nabla\omega(\X_t)+\nabla\omega(\Z_{t+1}),\X-\Z_{t+1}\rangle\ge0. \]

Rearranging and using the three point lemma in \eqref{lemma:threePointLemma}, we get that $\forall \X\in\mathcal{S}_n$, 
\begin{align} \label{ineq:smoothOptZ(t+1)}
\eta_t\langle\nabla f(\X_t),\Z_{t+1}-\X\rangle & \le \langle\nabla\omega(\X_t)-\nabla\omega(\Z_{t+1}),\Z_{t+1}-\X\rangle \nonumber 
\\ & = \breg(\X,\X_t)-\breg(\X,\Z_{t+1})-\breg(\Z_{t+1},\X_t).
\end{align}

From the gradient inequality we have that
\begin{align*}
f(\Z_{t+1}) & \ge f(\X_{t+1})-\langle\nabla f(\X_{t+1}),\X_{t+1}-\Z_{t+1}\rangle.
\end{align*} 

In addition, using the $\beta$-smoothness of $f$, the gradient inequality and the inequality in \eqref{ineq:strongConvexityOfBregmanDistance}, we obtain
\begin{align*}
f(\Z_{t+1}) & \le f(\X_{t})+\langle\nabla f(\X_t),\Z_{t+1}-\X_t\rangle+\frac{\beta}{2}\Vert\Z_{t+1}-\X_t\Vert_*^2
\\ & \le f(\X^*)+\langle\nabla f(\X_t),\Z_{t+1}-\X^*\rangle+\frac{\beta}{2}\Vert\Z_{t+1}-\X_t\Vert_*^2
\\ & \le f(\X^*)+\langle\nabla f(\X_t),\Z_{t+1}-\X^*\rangle+\beta\breg(\Z_{t+1},\X_t).
\end{align*} 

Combining the last two inequalities gives us
\begin{align*}
f(\X_{t+1}) & \le f(\X^*)+\langle\nabla f(\X_t),\Z_{t+1}-\X^*\rangle+\beta\breg(\Z_{t+1},\X_t)+\langle\nabla f(\X_{t+1}),\X_{t+1}-\Z_{t+1}\rangle.
\end{align*} 

Plugging in \eqref{ineq:smoothOptZ(t+1)} with $\X=\X^*$ we get
\begin{align*}
f(\X_{t+1}) & \le f(\X^*)+\frac{1}{\eta_t}\left(\breg(\X^*,\X_t)-\breg(\X^*,\Z_{t+1})-\breg(\Z_{t+1},\X_t)\right)
\\ & \ \ \ \ +\beta\breg(\Z_{t+1},\X_t)+\langle\nabla f(\X_{t+1}),\X_{t+1}-\Z_{t+1}\rangle
\\ & \le f(\X^*)+\frac{1}{\eta_t}\left(\breg(\X^*,\X_t)-\breg(\X^*,\Z_{t+1})\right)+\left(\beta-\frac{1}{\eta_t}\right)\breg(\Z_{t+1},\X_t)
\\ & \ \ \ \ +\Vert\nabla f(\X_{t+1})\Vert_{2}\cdot\Vert\X_{t+1}-\Z_{t+1}\Vert_*
\\ & = f(\X^*)+\frac{1}{\eta_t}\left(\breg(\X^*,\X_t)-\breg(\X^*,\X_{t+1})\right)+\left(\beta-\frac{1}{\eta_t}\right)\breg(\Z_{t+1},\X_t)
\\ & \ \ \ \ +\frac{1}{\eta_t}\left(\breg(\X^*,\X_{t+1})-\breg(\X^*,\Z_{t+1})\right)+\Vert\nabla f(\X_{t+1})\Vert_{2}\cdot\Vert\X_{t+1}-\Z_{t+1}\Vert_*
\\ & \le f(\X^*)+\frac{1}{\eta_t}\left(\breg(\X^*,\X_t)-\breg(\X^*,\X_{t+1})\right)+\left(\beta-\frac{1}{\eta_t}\right)\breg(\Z_{t+1},\X_t)
\\ & \ \ \ \ +\frac{1}{\eta_t}\left(\breg(\X^*,\X_{t+1})-\breg(\X^*,\Z_{t+1})\right)+2\Vert\nabla f(\X_{t+1})\Vert_{2}\sqrt{\breg(\Z_{t+1},\X_{t+1})},
\end{align*}
where the last inequality follows from \eqref{ineq:strongConvexityOfBregmanDistance}.

Averaging over $t=0,\ldots,T-1$ and taking $\eta_t=\eta\le\frac{1}{\beta}$ we reach,
\begin{align*}
\min_{1\le t\le {T}}f(\X_t) -f(\X^*) & \le  \frac{1}{T}\sum_{t=0}^{T-1} f(\X_{t+1}) -f(\X^*) 
\\ & \le \frac{1}{\eta T}\breg(\X^*,\X_1)+\frac{1}{\eta T}\sum_{t=0}^{T-1}\left(\breg(\X^*,\X_{t+1})-\breg(\X^*,\Z_{t+1})\right)
\\ & \ \ \ \ +\frac{2}{T\alpha}\sum_{t=0}^{T-1}\Vert\nabla f(\X_{t+1})\Vert_{2}\sqrt{\breg(\Z_{t+1},\X_{t+1})}
\\ & = \frac{1}{\eta T}\breg(\X^*,\X_1)+\frac{1}{\eta T}\sum_{t=1}^{T-1}\left(\breg(\X^*,\X_{t+1})-\breg(\X^*,\Z_{t+1})\right)
\\ & \ \ \ \ +\frac{2}{T\alpha}\sum_{t=1}^{T-1}\Vert\nabla f(\X_{t+1})\Vert_{2}\sqrt{\breg(\Z_{t+1},\X_{t+1})},
\end{align*}
where the last equality follows since $\Z_1=\X_1$.

\end{proof}

\begin{remark}Note that with the standard choice of step-size $\eta = 1/\beta$, whenever the sum $\sum_{t=1}^{T-1}\max\{B(\X^*,\X_{t+1})-B(\X^*,\Z_{t+1}), \sqrt{B(\Z_{t+1},\X_{t+1})}\}$ grows sublinearly in $T$, Theorem \ref{thm:converganceSmooth} indeed implies the convergence of Algorithm \ref{alg:LRMD} with deterministic gradients. In particular, when this sum is bounded by  a constant the rate is $O(1/T)$.
\end{remark}

\begin{theorem} \label{thm:StochasticConvergence}[Convergence with stochastic updates]
Let $\lbrace \X_t\rbrace_{t\geq 1}$ be a sequence of points in $\mS_n\cap\mbS^n_{++}$.
Let $\{\Z_t\}_{t\geq 1}$ be a sequence such that $\Z_1 = \X_1$, and for all $t\geq 1$, $\Z_{t+1}$ is the accurate stochastic MEG update to $\X_t$ as defined in \eqref{eq:MDvonNeumannStochasticUpdate} with fixed step-size $\eta>0$. Then, after $T$ iterations, letting $\bar{\X} \sim \textrm{Uni}\{\X_1,\dots,\X_T\}$, it holds that 
\begin{align*}
\E\left[f(\bar{\X})\right]-f(\X^*) & \le \frac{\breg(\X^*,\X_1)+\frac{G^2}{2}T\eta^2+\sum_{t=1}^T \E\left[\breg(\X^*,\X_{t+1})-\breg(\X^*,\Z_{t+1})\right]}{T\eta},
\end{align*}
where $G \geq \max_{t}\Vert{\widehat{\nabla}_t}\Vert_2$.
\end{theorem}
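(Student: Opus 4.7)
The plan is to adapt the deterministic proof of Theorem \ref{thm:converganceSmooth} to the stochastic setting, making two modifications: replace $\nabla f(\X_t)$ with the stochastic gradient $\widehat{\nabla}_t$ wherever it enters through the optimality condition for $\Z_{t+1}$, and absorb the "gradient error" term via Young's inequality rather than through smoothness. Concretely, I would first invoke the optimality condition for the exact stochastic MEG update combined with the three-point identity \eqref{lemma:threePointLemma} to obtain, for any $\X \in \mS_n$,
\begin{align*}
\eta \langle \widehat{\nabla}_t, \Z_{t+1} - \X \rangle \le \breg(\X, \X_t) - \breg(\X, \Z_{t+1}) - \breg(\Z_{t+1}, \X_t),
\end{align*}
and then specialize to $\X = \X^*$.

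Next, I would decompose
\begin{align*}
\eta\langle \widehat{\nabla}_t, \X_t - \X^* \rangle = \eta\langle \widehat{\nabla}_t, \Z_{t+1} - \X^* \rangle + \eta\langle \widehat{\nabla}_t, \X_t - \Z_{t+1} \rangle,
\end{align*}
and bound the last term by the dual norm inequality followed by Young's inequality $ab \le \tfrac{1}{2}a^2 + \tfrac{1}{2}b^2$ with $a = \eta\Vert\widehat{\nabla}_t\Vert_2$ and $b = \Vert\X_t - \Z_{t+1}\Vert_*$, which together with the strong convexity bound \eqref{ineq:strongConvexityOfBregmanDistance} yields $\eta \langle \widehat{\nabla}_t, \X_t - \Z_{t+1}\rangle \le \tfrac{\eta^2 G^2}{2} + \breg(\Z_{t+1}, \X_t)$. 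This exactly cancels the unfavourable $-\breg(\Z_{t+1},\X_t)$ term from the optimality bound, leaving
\begin{align*}
\eta\langle \widehat{\nabla}_t, \X_t - \X^*\rangle \le \breg(\X^*,\X_t) - \breg(\X^*,\Z_{t+1}) + \tfrac{\eta^2 G^2}{2}.
\end{align*}

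Finally, I would take expectations conditional on the history through iteration $t$ (so that $\X_t$ is fixed), use unbiasedness $\E[\widehat{\nabla}_t\mid\X_t] = \nabla f(\X_t)$ and convexity $f(\X_t) - f(\X^*) \le \langle \nabla f(\X_t), \X_t - \X^*\rangle$, then take total expectation. Adding and subtracting $\breg(\X^*,\X_{t+1})$ on the right to isolate the inexactness term $\breg(\X^*,\X_{t+1}) - \breg(\X^*,\Z_{t+1})$, summing over $t=1,\ldots,T$ makes the $\breg(\X^*,\X_t)-\breg(\X^*,\X_{t+1})$ differences telescope to at most $\breg(\X^*,\X_1)$ (dropping the non-negative tail $\breg(\X^*,\X_{T+1})\ge 0$). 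Dividing by $T\eta$ and recognizing $\E[f(\bar{\X})] = \tfrac{1}{T}\sum_{t=1}^T\E[f(\X_t)]$ from the uniform sampling of $\bar{\X}$ yields the claim.

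The main obstacle is the bookkeeping of randomness: $\X_{t+1}$ is generally a function of $\widehat{\nabla}_t$ in the inexact scheme, so the term $\breg(\X^*,\X_{t+1}) - \breg(\X^*,\Z_{t+1})$ cannot be simplified and must be kept inside an outer expectation. Beyond this, the only non-trivial analytic step is the Young's-inequality trick that replaces the smoothness-based cancellation used in the deterministic case with a $\tfrac{\eta^2 G^2}{2}$ overhead; everything else is a direct stochastic-mirror-descent calculation.
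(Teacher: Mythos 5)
Your proposal is correct and follows essentially the same route as the paper's proof: the optimality condition plus the three-point identity \eqref{lemma:threePointLemma}, absorbing the cross term $\eta\langle\widehat{\nabla}_t,\X_t-\Z_{t+1}\rangle$ against $-\breg(\Z_{t+1},\X_t)$ via H\"{o}lder and strong convexity (your Young's-inequality phrasing is equivalent to the paper's maximization of $-\tfrac{1}{2}a^2+a\eta\Vert\widehat{\nabla}_t\Vert_2$), then adding and subtracting $\breg(\X^*,\X_{t+1})$, telescoping, and using unbiasedness together with convexity under the tower property. The only difference is the order in which you sum and take expectations, which is immaterial.
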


\begin{proof}

%
%

Following the same arguments used to derive Eq. \eqref{ineq:smoothOptZ(t+1)}, but replacing the exact gradient with the stochastic gradient $\widehat{\nabla}_t$, we have that
\begin{align} \label{ineq:StochasticOptZ(t+1)}
\eta_t\langle\widehat{\nabla}_t,\Z_{t+1}-\X\rangle & \le \breg(\X,\X_t)-\breg(\X,\Z_{t+1})-\breg(\Z_{t+1},\X_t).
\end{align}

Adding $\eta_t\langle \widehat{\nabla}_t,\X_t-\Z_{t+1}\rangle$ to both sides of \eqref{ineq:StochasticOptZ(t+1)} with $\X=\X^*$, we obtain
\begin{align} \label{ineq:StochasticOptZ(t+1)_2}
\eta_t\langle\widehat{\nabla}_t,\X_{t}-\X^*\rangle & \le \breg(\X^*,\X_t)-\breg(\X^*,\Z_{t+1})-\breg(\Z_{t+1},\X_t)+\eta_t\langle \widehat{\nabla}_t,\X_t-\Z_{t+1}\rangle.
\end{align}

Using H\"{o}lder's inequality, and \eqref{ineq:strongConvexityOfBregmanDistance} since $\breg(\cdot,\cdot)$ is $1$-strongly convex, we have that
\begin{align*}
-\breg(\Z_{t+1},\X_t)+\eta_t\langle \widehat{\nabla}_t,\X_t-\Z_{t+1}\rangle & \le -\frac{1}{2}\Vert \Z_{t+1}-\X_t\Vert_*^2 + \eta_t\Vert\widehat{\nabla}_t\Vert_{2}\cdot\Vert\Z_{t+1}-\X_t\Vert_*
\\ & \le \max_{a\in\reals}\left\{-\frac{1}{2}a^2+a\eta_t\Vert\widehat{\nabla}_t\Vert_{2}\right\}=\frac{1}{2}\eta_t^2\Vert\widehat{\nabla}_t\Vert_{2}^2.
\end{align*}

Plugging this into \eqref{ineq:StochasticOptZ(t+1)_2} we get
\begin{align*}
\eta_t\langle\widehat{\nabla}_t,\X_{t}-\X^*\rangle & \le \breg(\X^*,\X_t)-\breg(\X^*,\Z_{t+1})+\frac{1}{2}\eta_t^2\Vert\widehat{\nabla}_t\Vert_{2}^2
\\ & = \breg(\X^*,\X_t)-\breg(\X^*,\X_{t+1})+\frac{1}{2}\eta_t^2\Vert\widehat{\nabla}_t\Vert_{2}^2 +\breg(\X^*,\X_{t+1})-\breg(\X^*,\Z_{t+1}).
\end{align*}

Summing over $t=1,...,T$ and dividing by $T$ we get
\begin{align*}
\frac{1}{T}\sum_{t=1}^T \eta_t \langle\widehat{\nabla}_t,\X_{t}-\X^*\rangle & \le \frac{1}{T}[\breg(\X^*,\X_1)-\breg(\X^*,\X_{T+1})]+\frac{1}{2T}\sum_{t=1}^T \eta_t^2\Vert\widehat{\nabla}_t\Vert_{2}^2
\\ & \ \ \ \ +\frac{1}{T}\sum_{t=1}^T \left[\breg(\X^*,\X_{t+1})-\breg(\X^*,\Z_{t+1})\right]
\\ & \le \frac{1}{T}\breg(\X^*,\X_1)+\frac{1}{2T}\sum_{t=1}^T \eta_t^2\Vert\widehat{\nabla}_t\Vert_{2}^2+\frac{1}{T}\sum_{t=1}^T \left[\breg(\X^*,\X_{t+1})-\breg(\X^*,\Z_{t+1})\right]. 
\end{align*}

Taking expectation on both sides, we obtain
\begin{align*}
\E\left[\frac{1}{T}\sum_{t=1}^T \eta_t \langle\widehat{\nabla}_t,\X_{t}-\X^*\rangle\right] \le \frac{1}{T}\breg(\X^*,\X_1)+\frac{G^2}{2T}\sum_{t=1}^T \eta_t^2+\frac{1}{T}\sum_{t=1}^T \E\left[\breg(\X^*,\X_{t+1})-\breg(\X^*,\Z_{t+1})\right]. 
\end{align*}

It addition, using the law of total expectation, the gradient inequality, and taking $\eta_t=\eta$,
\begin{align*}
\E\left[\frac{1}{T}\sum_{t=1}^T \eta \langle\widehat{\nabla}_t,\X_{t}-\X^*\rangle\right] & = \E\left[\frac{1}{T}\sum_{t=1}^T \eta \langle\E_{t}[\widehat{\nabla}_t\vert\X_t],\X_{t}-\X^*\rangle\right] = \E\left[\frac{1}{T}\sum_{t=1}^T \eta \langle\nabla f(\X_t),\X_{t}-\X^*\rangle\right]
\\ & \ge \E\left[\frac{1}{T}\sum_{t=1}^T \eta (f(\X_t)-f(\X^*))\right] = \eta \left(\E\left[\frac{1}{T}\sum_{t=1}^T f(\X_t)\right]-f(\X^*)\right). 
\end{align*}

Noticing that $\E\left[\frac{1}{T}\sum_{t=1}^T f(\X_t)\right]=\E\left[f(\bar{\X})\right]$ and combining the last two inequalities we obtain
\begin{align*}
\E\left[f(\bar{\X})\right]-f(\X^*) & \le \frac{\breg(\X^*,\X_1)+\frac{G^2}{2}T\eta^2+\sum_{t=1}^T \E\left[\breg(\X^*,\X_{t+1})-\breg(\X^*,\Z_{t+1})\right]}{T\eta}. 
\end{align*}
\end{proof}

\begin{remark}
Note that with a standard step-size of the form $\eta = C/\sqrt{T}$, for a suitable constant $C>0$, whenever the sum $\sum_{t=1}^{T}\E[B(\X^*,\X_{t+1})-B(\X^*,\Z_{t+1})]$ grows sublinearly in $\sqrt{T}$, Theorem \ref{thm:StochasticConvergence} indeed implies the convergence of Algorithm \ref{alg:LRMD} with stochastic gradients. In particular, when this sum is bounded by a constant the rate is $O(1/\sqrt{T})$.
\end{remark}

\subsubsection{Computing certificates for convergence}\label{sec:certificate}

As can be seen in Theorems \ref{thm:converganceSmooth} and \ref{thm:StochasticConvergence}, the convergence of our Algorithm \ref{alg:LRMD} naturally depends on the approximation errors $\left(\breg(\X^*,\X_{t+1})-\breg(\X^*,\Z_{t+1})\right)$ and $\breg(\Z_{t+1},\X_{t+1})$. Thus, naturally, a significant portion of our analysis will be devoted to bounding these errors. Our first step is the following lemma.

\begin{lemma} \label{lemma:sufficientConvegenceCondition}
Let $\X\in\mathcal{S}_n$, $\X\succ 0$. Let $\Z_{+}$ be the accurate MEG update to $\X$ as defined in \eqref{eq:MDvonNeumannUpdate} or \eqref{eq:MDvonNeumannStochasticUpdate}, and $\X_{+}$ be the corresponding low-rank MEG update as defined in \eqref{eq:LowRankMDvonNeumannUpdate}.
Then, for any $\varepsilon\in(0,3/4]$,
\begin{align*}
\max\lbrace\breg(\X^*,\X_{+})-\breg(\X^*,\Z_{+}),\breg(\Z_{+},\X_{+})\rbrace \le \max\left\lbrace 2\varepsilon, \log\left(\frac{(n-r)\lambda_{r+1}(\Y)}{\varepsilon \sum_{i=1}^n \lambda_i(\Y)}\right)\right\rbrace,
\end{align*}
where $\Y =\exp(\log (\X)-\eta\nabla{}f(\X))$ for deterministic gradients or $\Y =\exp(\log (\X)-\eta\widehat{\nabla})$ for stochastic gradients with $\widehat{\nabla}$ being an unbiased estimator for $\nabla{}f(\X)$.
\end{lemma}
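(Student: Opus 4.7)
The plan is to exploit the crucial fact that $\Z_+$ and $\X_+$ are simultaneously diagonalizable in the eigen-basis $\V=[\v_1,\ldots,\v_n]$ of $\Y$, reducing everything to scalar comparisons of their respective eigenvalues. Writing $\lambda_i=\lambda_i(\Y)$, $b=\sum_{i=1}^n\lambda_i$, and $a=\sum_{i=1}^r\lambda_i$, the eigenvalues of $\Z_+=\Y/b$ are $q_i=\lambda_i/b$, while those of $\X_+$ are $p_i=(1-\varepsilon)\lambda_i/a$ for $i\le r$ and $p_i=\varepsilon/(n-r)$ for $i>r$. A direct computation from the definition of the matrix logarithm then yields
\[
\log\Z_+-\log\X_+=\log\left(\frac{a}{b(1-\varepsilon)}\right)\V^r{\V^r}^{\top}+\sum_{i=r+1}^n\log\left(\frac{(n-r)\lambda_i}{b\varepsilon}\right)\v_i\v_i^{\top}.
\]
Throughout I will abbreviate $L:=\log\bigl((n-r)\lambda_{r+1}/(b\varepsilon)\bigr)$ and repeatedly invoke the elementary inequality $-\log(1-\varepsilon)\le 2\varepsilon$, which is valid precisely on $(0,3/4]$ and explains the range assumed in the lemma.

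For the first quantity I compute $\breg(\X^*,\X_+)-\breg(\X^*,\Z_+)=\trace(\X^*(\log\Z_+-\log\X_+))$ by substituting the displayed identity. Setting $\pi=\trace(\X^*\V^r{\V^r}^{\top})\in[0,1]$ and $\pi_i=\v_i^{\top}\X^*\v_i\ge 0$ for $i>r$, which are nonnegative since $\X^*\succeq 0$ and satisfy $\pi+\sum_{i>r}\pi_i=\trace(\X^*)=1$, the trace collapses to $\pi\log(a/(b(1-\varepsilon)))+\sum_{i>r}\pi_i\log((n-r)\lambda_i/(b\varepsilon))$. Using $a\le b$ gives $\log(a/(b(1-\varepsilon)))\le-\log(1-\varepsilon)\le 2\varepsilon$; using $\lambda_i\le\lambda_{r+1}$ for $i>r$ gives $\log((n-r)\lambda_i/(b\varepsilon))\le L$ for each $i$ (regardless of the sign of $L$). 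This produces the mixture bound $\pi\cdot 2\varepsilon+(1-\pi)L$, which a small case analysis on the sign of $L$ (splitting at $L\ge 2\varepsilon$, $0\le L<2\varepsilon$, and $L<0$) shows is at most $\max\{2\varepsilon,L\}$.

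For the second quantity I use that common diagonalizability of $\Z_+$ and $\X_+$ reduces $\breg(\Z_+,\X_+)$ to the scalar KL-divergence $\sum_i q_i\log(q_i/p_i)$. On the top-$r$ block the ratio $q_i/p_i=a/(b(1-\varepsilon))$ is independent of $i$, so that block contributes $\alpha\log(\alpha/(1-\varepsilon))$ with $\alpha:=a/b$. The bottom-$(n-r)$ block I rewrite as $\sum_{i>r}q_i\log q_i+(1-\alpha)\log((n-r)/\varepsilon)$ and bound via the key monotonicity estimate $\sum_{i>r}q_i\log q_i\le(1-\alpha)\log q_{r+1}$, which holds because $0<q_i\le q_{r+1}$ implies $\log q_i\le\log q_{r+1}$ and the inequality is preserved under multiplication by the positive weights $q_i$. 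Combining these observations with $\alpha\log\alpha\le 0$ and $-\alpha\log(1-\varepsilon)\le 2\varepsilon$ yields $\breg(\Z_+,\X_+)\le 2\varepsilon\cdot\alpha+(1-\alpha)L$, which by the same case analysis is again at most $\max\{2\varepsilon,L\}$.

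The main obstacle is that neither $\log(a/(b(1-\varepsilon)))$ nor $L$ has a definite sign ($L$ becomes arbitrarily negative when $\lambda_{r+1}$ is tiny relative to $b\varepsilon/(n-r)$), so one cannot simply discard negative contributions or naively add two positive bounds. The argument goes through because, in both quantities, the controlling upper bound appears as a \emph{convex combination} with weights $(\pi,1-\pi)$ (respectively $(\alpha,1-\alpha)$) of $2\varepsilon$ and $L$; a mixture of two scalars is always dominated by their maximum, which is exactly the form asserted by the lemma.
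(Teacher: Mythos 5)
Your proof is correct and follows essentially the same route as the paper: both arguments rest on the simultaneous diagonalizability of $\Z_+$ and $\X_+$ in the eigenbasis of $\Y$, the explicit per-eigenvalue formulas $\log\left(\frac{a}{b(1-\varepsilon)}\right)$ (top block) and $\log\left(\frac{(n-r)\lambda_i(\Y)}{\varepsilon b}\right)$ (bottom block), and the elementary bounds $-\log(1-\varepsilon)\le 2\varepsilon$ and $\lambda_i(\Y)\le\lambda_{r+1}(\Y)$. The only cosmetic difference is that the paper bounds both Bregman quantities by $\lambda_1(\log(\Z_+)-\log(\X_+))$ via the von Neumann trace inequality, whereas you expand the traces directly as convex combinations of the same eigenvalues (and treat $\breg(\Z_+,\X_+)$ as a scalar KL divergence), which is a marginally sharper but equivalent reduction.
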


\begin{proof}

The von Neumann inequality claims that for any $\X,\Y\in\mathbb{S}^n$, it holds that $\langle \X,\Y\rangle \le \sum_{i=1}^n \lambda_i(\X)\lambda_i(\Y)$. Therefore,
\begin{align} \label{bound1ErrorTerm}
\breg(\X^*,\X_{+})-\breg(\X^*,\Z_{+})  & = \trace(\X^*\log(\X^*)-\X^*\log(\X_{+}))-\trace(\X^*\log(\X^*)-\X^*\log(\Z_{+})) \nonumber
\\ & = \trace\left(\X^*\left[\log(\Z_{+})-\log(\X_{+})\right]\right) \nonumber
\\ & \le \sum_{i=1}^n \lambda_i(\X^*)\lambda_i(\log(\Z_{+})-\log(\X_{+}) \nonumber
\\ & \le \lambda_1(\log(\Z_{+})-\log(\X_{+})).
\end{align}

Using similar arguments, we get the bound
\begin{align} \label{bound2ErrorTerm}
\breg(\Z_{+},\X_{+}) & = \trace(\Z_{+}\log(\Z_{+})-\Z_{+}\log(\X_{+})) \nonumber
\\ & \le \sum_{i=1}^n \lambda_i(\Z_{+})\lambda_i(\log(\Z_{+})-\log(\X_{+})
 \le \lambda_1(\log(\Z_{+})-\log(\X_{+})),
\end{align}
where the last inequality holds since $\Z_{t+1}\in\mathcal{S}_n$.

Combining \eqref{bound1ErrorTerm} and \eqref{bound2ErrorTerm}, we obtain the following bound:
\begin{align*} 
\max\lbrace\breg(\X^*,\X_{+})-\breg(\X^*,\Z_{+}),\breg(\Z_{+},\X_{+})\rbrace
 \le \lambda_1(\log(\Z_{+})-\log(\X_{+})).
\end{align*}

Let $\Y=\V\Lambda\V^{\top}$ denote the eigen-decomposition of $\Y$. Therefore, its rank-$r$ approximation is $\Y^r=\V^r\Lambda^r{\V^r}^{\top}$. $\Z_{+}$ and $\X_{+}$ have the same eigen-vectors, and therefore $\log(\Z_{+})-\log(\X_{+})=\V\left[\log\left(\frac{\Lambda}{b}\right)-\log\left(\begin{array}{cc}(1-\varepsilon)\frac{\Lambda^r}{a} & 0 \\ 0 & \frac{\varepsilon}{n-r}\I\end{array}\right)\right]\V^{\top}$, where $a:=\sum_{i=1}^{r} \lambda_i(\Y)$ and $b:=\sum_{i=1}^{n} \lambda_i(\Y)$.
Denote $\D=\log\left(\frac{\Lambda}{b}\right)-\log\left(\begin{array}{cc}(1-\varepsilon)\frac{\Lambda^r}{a} & 0 \\ 0 & \frac{\varepsilon}{n-r}\I\end{array}\right)$.
It holds that
\begin{align}
\forall j\in[r] &:\ \lambda_j(\D) = \log\left(\frac{\frac{\lambda_j(\Y)}{b}}{(1-\varepsilon)\frac{\lambda_j(\Y)}{a}}\right); \label{StochasticBigEig} 
\\ \forall j>r & :\ \lambda_j(\D) = \log\left(\frac{\frac{\lambda_j(\Y)}{b}}{\frac{\varepsilon}{n-r}}\right). \label{StochasticLittleEig}
\end{align}
It is important to note that $\lambda_1(\D),\ldots,\lambda_n(\D)$ are not necessarily ordered in a non-increasing order.

From \eqref{StochasticBigEig}, for all $j\le r$ it holds that
\begin{align*}
\lambda_j(\D) & = \log\left(\frac{\lambda_j(\Y)\sum_{i=1}^{r}\lambda_i(\Y)}{(1-\varepsilon)\lambda_j(\Y)\sum_{i=1}^n\lambda_i(\Y)}\right)
= \log\left(\frac{\sum_{i=1}^{r}\lambda_i(\Y)}{\sum_{i=1}^n\lambda_i(\Y)}\right)+ \log\left(\frac{1}{1-\varepsilon}\right)\\
&\le \log\left(\frac{1}{1-\varepsilon}\right) \underset{(a)}{\le} 2\varepsilon,
\end{align*}
where (a) holds for any $\varepsilon\le\frac{3}{4}$.

For $j\ge r+1$, note that using the definition of the eigenvalues of $\D$ in \eqref{StochasticLittleEig}, it holds that 
\begin{align*} 
\lambda_j(\D) & = \log\left(\frac{(n-r)\lambda_j(\Y)}{\varepsilon \sum_{i=1}^n \lambda_i(\Y)}\right)
\le \log\left(\frac{(n-r)\lambda_{r+1}(\Y)}{\varepsilon \sum_{i=1}^n \lambda_i(\Y)}\right) = \lambda_{r+1}(\D).
\end{align*}

The last two inequalities put together give us the desired result.  

\end{proof}

Lemma \ref{lemma:sufficientConvegenceCondition} gives us a sufficient condition so that the errors due to the low rank MEG updates in Algorithm \ref{alg:LRMD} do not become too significant.
As seen, a sufficient condition to guarantee that $\max\lbrace\breg(\X^*,\X_{+})-\breg(\X^*,\Z_{+}),\breg(\Z_{+},\X_{+})\rbrace\le 2\varepsilon$ is that 
\begin{equation} \label{convergenceCondition}
\log\left(\frac{(n-r)\lambda_{r+1}(\Y)}{\varepsilon \sum_{i=1}^n \lambda_i(\Y)}\right)\le 2\varepsilon.
\end{equation}
 
Unfortunately, the condition in \eqref{convergenceCondition} is useless as it involves computing the full matrix of $\Y$ in order to calculate the scalar $b=\sum_{i=1}^n {\lambda_i(\Y)}$, which is exactly what we are striving to avoid. Instead, $b$ can be approximated with some $b_k:=\sum_{i=1}^k {\lambda_i(\Y)}$ for some $k\le n$.
Thus, instead of the full eigen-decomposition of $\Y$, in order to check if our Algorithm \ref{alg:LRMD} is converging with a tolerable error, we can check the weaker condition of $\log\left(\frac{(n-r)\lambda_{r+1}(\Y)}{\varepsilon b_k}\right)\le 2\varepsilon$, which is likewise a sufficient condition since $b_k \leq b$, and can be computed much more efficiently for $k\ll n$. In particular, computing $b_{r+1}$, where $r$ is the SVD rank parameter in Algorithm \ref{alg:LRMD} requires to increase the rank of the SVD computations in Algorithm \ref{alg:LRMD} by only one.\footnote{Since $\lambda_{r+1}(\Y)$ is also required in \eqref{convergenceCondition} there is no point in computing $b_k$ for $k< r+1$ which will result in a worse approximation of the scalar $b$.}\footnote{We note that an alternative approach which we do not pursue here, could be to approximate $b=\trace(\Y)$ using the expectation $\trace(\Y)=\E_{\u\sim\mathcal{N}(0,I)}\u^{\top}\Y\u$. This expectation could be in principle estimated using random sampling and  fast approximated matrix exponential-vector products using the Lanczos algorithm \cite{musco2018stability}.} In Section \ref{section:experiments} we present numerical evidence which demonstrate that in practice the accurate certificate computed using $b=b_n$ does not give any additional benefit over the use of $b_{r+1}$. Moreover, in the experiments we demonstrate that this computationally-cheap certificate indeed holds true from the very early stages of the run (already from the first iteration in most cases), and hence, may indeed be appealing for practical uses.

\section{Provable Bounds on Low-Rank Approximations under Strict Complementarity and Warm-Start Initialization}\label{sec:main_tools}
In this section we provide two of the technical foundations towards obtaining provable convergence results for our Algorithm \ref{alg:LRMD}  from a ``warm-start" initialization under the strict complementarity condition (Assumption \ref{ass:strictcomp}).

In the following lemma we present our main technical result: for any point sufficiently close to an optimal solution $\X^*$ for which Assumption \ref{ass:strictcomp} holds, the error in using low rank MEG updates is indeed guaranteed to be properly bounded.

\begin{lemma} \label{lemma:StochasticRadius}
Let $\X^*\in\mX^*$ be an optimal solution for which Assumption \ref{ass:strictcomp} holds with $\delta >0$. Let $\rank(\X^*):=r^*$ and let $n\not=r\ge r^*$ be the SVD rank parameter. Let $\X\in\mathcal{S}_n,\X\succ 0$ be a matrix such that $\lambda_{r^*+1}(\X)\le\frac{\varepsilon_{-}}{n-r}$ for some $\varepsilon_{-}\in[0,1]$ and let $\widetilde{\nabla}\in\mbS^n$ be such that $\Vert\widetilde{\nabla}-\nabla f(\X)\Vert\le\xi$ for some $\xi\ge0$ (for deterministic updates $\xi=0$). Denote $G=\sup_{\X\in\mathcal{S}_n}\Vert \nabla f(\X)\Vert_2$. \ If 
\begin{equation} \label{eq:StochasticRadiusOfconvergence}
\sqrt{\breg(\X^*,\X)} \le \frac{1}{\sqrt{2}}\left[2\beta+\left(1+\frac{2\sqrt{2r^*}}{\lambda_{r^*}(\X^*)}\right)G\right]^{-1}\left(\delta-\frac{1}{\eta}\log\left(\frac{\varepsilon_{-}}{\varepsilon}\right)+\frac{2\varepsilon}{\eta}-2\xi\right), 
\end{equation}
then, for any $\eta>0$ the matrix $\Y = \exp\left({\log(\X)-\eta\widetilde{\nabla}}\right)$ and scalar $b=\sum_{i=1}^n\lambda_i(\Y)$ satisfy:
\begin{align}
\log\left(\frac{(n-r)\lambda_{r+1}(\Y)}{\varepsilon b}\right) \le 2\varepsilon.
\end{align}
As a result, for $\Z_+ := \Y/b$ and $\X_+$ as defined in Eq. \eqref{eq:LowRankMDvonNeumannUpdate}, it holds that 
\begin{align*}
\max\lbrace\breg(\X^*,\X_{+})-\breg(\X^*,\Z_{+}),\breg(\Z_{+},\X_{+})\rbrace \le 2\varepsilon.
\end{align*}
\end{lemma}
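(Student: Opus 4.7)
The plan is to reduce the inequality to a spectral gap on $\M := \log\X - \eta\widetilde{\nabla}$. Since $\Y = \exp(\M)$, we have $\lambda_i(\Y) = e^{\lambda_i(\M)}$ and in particular $b = \sum_{i=1}^n e^{\lambda_i(\M)} \ge e^{\lambda_{r^*}(\M)}$. Therefore
\begin{equation*}
\log\frac{(n-r)\lambda_{r+1}(\Y)}{\varepsilon\, b} \;\le\; \log\frac{n-r}{\varepsilon} \;-\; \bigl(\lambda_{r^*}(\M) - \lambda_{r+1}(\M)\bigr),
\end{equation*}
so it suffices to prove the eigenvalue gap $\lambda_{r^*}(\M) - \lambda_{r+1}(\M) \ge \log((n-r)/\varepsilon) - 2\varepsilon$.

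Next, I estimate both eigenvalues via Courant--Fischer on subspaces aligned with the eigenvectors of $\X$ (where $\log\X$ is diagonal). Let $\W^{r^*}$ denote the top $r^*$ eigenvectors of $\X$, let $\V^*$ denote the top $r^*$ eigenvectors of $\X^*$, and set $\P^* := \V^*(\V^*)^\top$. For the upper bound on $\lambda_{r+1}(\M)$, take any $(n-r)$-dimensional subspace of $\mathrm{span}(\W^{r^*})^\perp$; there $u^\top\log\X\,u \le \log\lambda_{r^*+1}(\X) \le \log(\varepsilon_-/(n-r))$ by the tail assumption on $\X$. For the gradient term, Lemma~\ref{lemma:opt-grad} lets me write $\nabla f(\X^*) = \lambda_n(\nabla f(\X^*))\I + \Delta'$, where $\Delta'\succeq 0$ annihilates $\V^*$ and has minimum eigenvalue $\ge\delta$ on $\mathrm{span}(\V^*)^\perp$ by strict complementarity; decomposing $u = \P^* u + (\I-\P^*) u$ then yields $u^\top\nabla f(\X^*)u \ge \lambda_n(\nabla f(\X^*)) + \delta(1 - \|\P^* u\|^2)$. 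For the lower bound on $\lambda_{r^*}(\M)$, I use the $r^*$-dimensional subspace $\mathrm{span}(\W^{r^*})$; on it $u^\top\log\X\,u \ge \log\lambda_{r^*}(\X)$ and the same decomposition gives $u^\top\nabla f(\X^*) u \le \lambda_n(\nabla f(\X^*)) + \|\Delta'\|_2\|(\I-\P^*)u\|^2$.

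The three error quantities remaining are bounded by standard tools. By Weyl, $\lambda_{r^*}(\X) \ge \lambda_{r^*}(\X^*) - \|\X-\X^*\|_2$, and $\|\X-\X^*\|_* \le \sqrt{2\breg(\X^*,\X)}$ by the $1$-strong convexity~\eqref{ineq:strongConvexityOfBregmanDistance} of the Bregman distance. By $\beta$-smoothness of $f$ and the triangle inequality, $\|\widetilde{\nabla} - \nabla f(\X^*)\|_2 \le \xi + \beta\|\X-\X^*\|_* \le \xi + \beta\sqrt{2\breg(\X^*,\X)}$. The eigenvector misalignment is controlled by the Davis--Kahan $\sin\Theta$ theorem: since $\lambda_{r^*+1}(\X^*) = 0$, the relevant eigen-gap of $\X^*$ is $\lambda_{r^*}(\X^*)$, giving $\|\sin\Theta(\V^*,\W^{r^*})\|_F \le \sqrt{2\breg(\X^*,\X)}/\lambda_{r^*}(\X^*)$, which in turn bounds both $\|\P^* u\|$ for $u\in\mathrm{span}(\W^{r^*})^\perp$ and $\|(\I-\P^*) u\|$ for $u\in\mathrm{span}(\W^{r^*})$. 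Substituting these estimates (with $\|\Delta'\|_2 \le 2G$, and absorbing the higher-order $\|\sin\Theta\|^2$ contributions via $\|\sin\Theta\|\le 1$ on the admissible ball) into the gap formula and collecting the terms linear in $\sqrt{\breg(\X^*,\X)}$ yields precisely a rearrangement of hypothesis~\eqref{eq:StochasticRadiusOfconvergence}. The final claim $\max\{\breg(\X^*,\X_+) - \breg(\X^*,\Z_+),\, \breg(\Z_+,\X_+)\} \le 2\varepsilon$ is then immediate from Lemma~\ref{lemma:sufficientConvegenceCondition}.

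The main obstacle is that the matrix logarithm is not Lipschitz near zero and does not commute with $\widetilde{\nabla}$, so $u^\top\log\X\,u$ cannot be bounded cleanly for arbitrary test vectors $u$; the choice of Courant--Fischer subspaces aligned with $\W^{r^*}$ resolves this, at the cost of a Davis--Kahan misalignment between $\W^{r^*}$ and $\V^*$, and this misalignment is exactly what produces the $1/\lambda_{r^*}(\X^*)$ factor in the radius condition~\eqref{eq:StochasticRadiusOfconvergence}.
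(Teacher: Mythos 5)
Your reduction has a genuine gap in how you lower-bound the normalization constant $b$. You discard all but one eigenvalue via $b=\sum_{i=1}^n e^{\lambda_i(\M)}\ge e^{\lambda_{r^*}(\M)}$ and then aim for the spectral-gap target $\lambda_{r^*}(\M)-\lambda_{r+1}(\M)\ge\log\left({(n-r)}/{\varepsilon}\right)-2\varepsilon$. Tracing your own estimates through (Courant--Fischer on $\mathrm{span}(\W^{r^*})$ and its orthocomplement, Davis--Kahan, smoothness), the best lower bound available for $\lambda_{r^*}(\M)$ is of the form $\log\lambda_{r^*}(\X)-\eta\lambda_n(\nabla f(\X^*))-\eta(\xi+\beta\Vert\X-\X^*\Vert_*)-\eta\Vert\Delta'\Vert_2 s^2$, so after combining with the upper bound on $\lambda_{r+1}(\M)$ your sufficient condition becomes
\begin{align*}
\eta\delta \;\ge\; \log\left(\frac{\varepsilon_-}{\varepsilon}\right)-2\varepsilon+2\eta\xi+\log\left(\frac{1}{\lambda_{r^*}(\X)}\right)+\eta\cdot(\text{terms linear in }\sqrt{\breg(\X^*,\X)}),
\end{align*}
i.e.\ it carries an extra additive penalty $\log(1/\lambda_{r^*}(\X))\ge\log r^*$, which can be arbitrarily large when the smallest nonzero eigenvalue of $\X^*$ is small. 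This penalty does not scale with $\sqrt{\breg(\X^*,\X)}$ and is simply absent from hypothesis \eqref{eq:StochasticRadiusOfconvergence}: for $\X$ arbitrarily close to $\X^*$ the hypothesis only requires $\delta-\frac1\eta\log(\varepsilon_-/\varepsilon)+\frac{2\varepsilon}{\eta}-2\xi\ge 0$, which does not imply $\eta\delta\gtrsim\log(1/\lambda_{r^*}(\X^*))$. So your argument proves a strictly weaker statement and cannot close under the stated radius. (There is also a secondary bookkeeping issue: your linear-in-$\sqrt{\breg}$ coefficient, which involves $(\delta+\Vert\Delta'\Vert_2)/\lambda_{r^*}(\X^*)$ after absorbing $s^2\le s$, does not obviously fit under the constant $2\beta+(1+2\sqrt{2r^*}/\lambda_{r^*}(\X^*))G$ in \eqref{eq:StochasticRadiusOfconvergence}, e.g.\ for $r^*=1$; but that is a constants matter, not the core obstruction.)

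The missing idea is to lower-bound $\log b$ using the full weight structure of $\X$ rather than a single eigenvalue. Since $\sum_i\lambda_i(\X)=1$ and $\log$ is concave, Jensen gives $-\log\big(\sum_i\lambda_i(\Y)\big)\le-\sum_i\lambda_i(\X)\log\big(\lambda_i(\Y)/\lambda_i(\X)\big)$, and then the von Neumann inequality yields $-\log b\le-\langle\X,\M\rangle+\trace(\X\log\X)=\eta\langle\X,\widetilde{\nabla}\rangle$, in which the entropy term $\trace(\X\log\X)$ cancels exactly; comparing $\widetilde{\nabla}$ with $\nabla f(\X^*)$ and invoking Lemma \ref{lemma:opt-grad} then gives $-\log b\le\eta\lambda_n(\nabla f(\X^*))+\eta(G+\beta)\Vert\X-\X^*\Vert_*+\eta\xi$ with no $\log(1/\lambda_{r^*})$ loss. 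Your treatment of the numerator term $\lambda_{r+1}(\Y)$ (test space equal to the top-$r^*$ eigenspace of $\X$, strict complementarity through the decomposition $\nabla f(\X^*)=\lambda_n(\nabla f(\X^*))\I+\Delta'$, Davis--Kahan for the misalignment, $\beta$-smoothness, and \eqref{ineq:strongConvexityOfBregmanDistance}) is essentially the same mechanism the paper uses there, and the final appeal to Lemma \ref{lemma:sufficientConvegenceCondition} is fine; it is only the handling of $b$ that must be replaced as above for the proof to go through with the stated constant.
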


\begin{proof} 

It holds that
\begin{align} \label{ineq:StochasticLittleEigBound}
\log\left(\frac{(n-r)\lambda_{r^*+1}(\Y)}{\varepsilon \sum_{i=1}^n \lambda_i(\Y)}\right) & = \log\left(\frac{n-r}{\varepsilon}\right) + \log(\lambda_{r+1}(\Y))-\log\left( \sum_{i=1}^n \lambda_i(\Y)\right) \nonumber
\\ & \le \log\left(\frac{n-r}{\varepsilon}\right) + \log(\lambda_{r^*+1}(\Y))-\log\left( \sum_{i=1}^n \lambda_i(\Y)\right).
\end{align}
We will now separately bound the last two terms.

\begin{align}  \label{ineq:StochasticInProofLambdaRPlus1}
\log(\lambda_{r^*+1}(\Y)) & = \lambda_{r^*+1}(\log(\Y)) \underset{(a)}{\le} \lambda_{r^*+1}(\log(\X)-\eta\nabla f(\X^*)) + \eta\lambda_1(\nabla f(\X^*)-\widetilde{\nabla}) \nonumber
\\ & \le \sum_{i=1}^{r^*+1}\lambda_{i}(\log(\X)-\eta\nabla f(\X^*))-\sum_{i=1}^{r^*}\lambda_{i}(\log(\X)-\eta\nabla f(\X^*))+\eta\Vert\widetilde{\nabla}-\nabla f(\X^*)\Vert_2 \nonumber
\\ & \underset{(b)}{\le} \sum_{i=1}^{r^*+1}\lambda_{i}(\log(\X))-\eta\sum_{i=1}^{r^*+1}\lambda_{n-i+1}(\nabla f(\X^*))-\sum_{i=1}^{r^*}\lambda_{i}(\log(\X)-\eta\nabla f(\X^*)) \nonumber \\ & \ \ \ +\eta\Vert\widetilde{\nabla}-\nabla f(\X^*)\Vert_2 \nonumber
\\ & \underset{(c)}{=} \sum_{i=1}^{r^*}\lambda_{i}(\log(\X))+\log\left(\lambda_{r^*+1}(\X)\right)-\eta r^*\lambda_n(\nabla f(\X^*))-\eta \lambda_{n-r^*}(\nabla f(\X^*))\nonumber \\ & \ \ \ -\sum_{i=1}^{r^*}\lambda_{i}(\log(\X)-\eta\nabla f(\X^*))+\eta\Vert\widetilde{\nabla}-\nabla f(\X^*)\Vert_2,
\end{align}
where (a) follows from Weyl's inequality (see for instance\cite{Weyl}), (b) follows from Ky Fan's inequality for eigenvalues (see for instance \cite{kyFan}) and (c) follows from Lemma \ref{lemma:opt-grad}.

Since $f$ is $\beta$-smooth,
\begin{align} \label{ineq:StochasticNorm-nablat-gradopt}
\Vert\widetilde{\nabla}-\nabla f(\X^*)\Vert_2 \le \Vert\widetilde{\nabla}-\nabla f(\X)\Vert_2+\Vert\nabla f(\X)-\nabla f(\X^*)\Vert_2 \le \xi+\beta\Vert\X-\X^*\Vert_*.
\end{align}

Denote $\log(\X)=\W\D\W^{\top}$ to be the eigen-decomposition of $\log(\X)$ and denote $\W_{r^*}$ to be the matrix with the $r^*$ first columns of $\W$.
\begin{align} \label{ineq:inProofSumReigs}
-\sum_{i=1}^{r^*}\lambda_{i}(\log(\X)-\eta\nabla f(\X^*)) & \le -\W_{r^*}\W_{r^*}^{\top}\bullet(\log(\X)-\eta\nabla f(\X^*)) \nonumber
\\ & = -\sum_{i=1}^{r^*}\lambda_i(\log(\X))+\eta\W_{r^*}\W_{r^*}^{\top}\bullet\nabla f(\X^*).
\end{align}

Let $\X^*=\V^*\Lambda^*{\V^*}^{\top}$ denote the eigen-decomposition of $\X^*$. From Lemma \ref{lemma:opt-grad} this implies that $\V^*{\V^*}^{\top}\bullet\nabla f(\X^*)=r^*\lambda_{n}(\nabla f(\X^*))$. Therefore,
\begin{align} \label{ineq:inProofInnerProd}
\W_{r^*}\W_{r^*}^{\top}\bullet\nabla f(\X^*) & = \V^*{\V^*}^{\top}\bullet\nabla f(\X^*)+\W_{r^*}\W_{r^*}^{\top}\bullet\nabla f(\X^*)-\V^*{\V^*}^{\top}\bullet\nabla f(\X^*) \nonumber
\\ & \le \V^*{\V^*}^{\top}\bullet\nabla f(\X^*)+\Vert\W_{r^*}\W_{r^*}^{\top}-\V^*{\V^*}^{\top}\Vert_{*}\Vert\nabla f(\X^*)\Vert_{2} \nonumber
\\ & = r^*\lambda_{n}(\nabla f(\X^*))+\Vert\W_{r^*}\W_{r^*}^{\top}-\V^*{\V^*}^{\top}\Vert_{*}\Vert\nabla f(\X^*)\Vert_{2} \nonumber
\\ & \le r^*\lambda_{n}(\nabla f(\X^*))+\sqrt{2r^*}\Vert\W_{r^*}\W_{r^*}^{\top}-\V^*{\V^*}^{\top}\Vert_{F}\Vert\nabla f(\X^*)\Vert_{2} \nonumber
\\ & \underset{(a)}{\le} r^*\lambda_{n}(\nabla f(\X^*))+\frac{2\sqrt{2r^*}\Vert\X-\X^*\Vert_{F}}{\lambda_{r^*}(\X^*)}\Vert\nabla f(\X^*)\Vert_{2} \nonumber
\\ & \le r^*\lambda_{n}(\nabla f(\X^*))+\frac{2\sqrt{2r^*}\Vert\X-\X^*\Vert_{*}}{\lambda_{r^*}(\X^*)}\Vert\nabla f(\X^*)\Vert_{2}.
\end{align}
Here, (a)  follows using the Davis-Kahan $\sin(\theta)$ theorem --- see Lemma \ref{lemma:DavisKahan} in the appendix.

Plugging \eqref{ineq:StochasticNorm-nablat-gradopt} and the bounds \eqref{ineq:inProofSumReigs} and \eqref{ineq:inProofInnerProd} into \eqref{ineq:StochasticInProofLambdaRPlus1}, we get
\begin{align} 
\log(\lambda_{r^*+1}(\Y)) & \le \log\left(\lambda_{r^*+1}(\X)\right)-\eta \lambda_{n-r^*}(\nabla f(\X^*))+\eta\frac{2\sqrt{2r^*}\Vert\X-\X^*\Vert_{*}}{\lambda_{r^*}(\X^*)}\Vert\nabla f(\X^*)\Vert_{2} \nonumber
\\ & \ \ \ +\eta\xi+\eta\beta\Vert\X-\X^*\Vert_* \label{ineq:StochasticInProofSecondTermWithoutAssumption}
\\ & \le \log\left(\frac{\varepsilon_{-}}{n-r}\right)-\eta \lambda_{n-r^*}(\nabla f(\X^*))+\eta\frac{2\sqrt{2r^*}\Vert\X-\X^*\Vert_{*}}{\lambda_{r^*}(\X^*)}\Vert\nabla f(\X^*)\Vert_{2} \nonumber
\\ & \ \ \ +\eta\xi+\eta\beta\Vert\X-\X^*\Vert_*. \label{ineq:StochasticInProofSecondTerm}
\end{align}
where the second inequality holds from the assumption that $\lambda_{r^*+1}(\X)\le\frac{\varepsilon_{-}}{n-r}$.

For the last term of \eqref{ineq:StochasticLittleEigBound}, 
\begin{align}  
-\log\left(\sum_{i=1}^n\lambda_i(\Y)\right) & = -\log\Big(\sum_{i=1}^n\lambda_i\left(\exp\left(\log(\X)-\eta_t\widetilde{\nabla}\right)\right)\Big) \nonumber
\\ & \underset{(a)}{\le} -\sum_{i=1}^n\lambda_i(\X)\cdot\log\left(\frac{\lambda_i\left(\exp\left(\log(\X)-\eta\widetilde{\nabla}\right)\right)}{\lambda_i(\X)}\right) \nonumber
\\ & = -\sum_{i=1}^n\lambda_i(\X)\cdot\left[\log\left(\lambda_i\left(\exp\left(\log(\X)-\eta\widetilde{\nabla}\right)\right)\right)-\log(\lambda_i(\X)\right] \nonumber
\\ & = -\sum_{i=1}^n\lambda_i(\X)\cdot\left[\lambda_i\left(\log(\X)-\eta\widetilde{\nabla}\right)-\log(\lambda_i(\X)\right] \nonumber
\\ & \underset{(b)}{\le} -\langle\X,\log(\X)-\eta\widetilde{\nabla}\rangle +\sum_{i=1}^n\lambda_i(\X)\lambda_i(\log(\X)) \nonumber
\\ & \underset{(c)}{=} -\langle\X,\log(\X)-\eta\widetilde{\nabla}\rangle +\sum_{i=1}^n\lambda_i(\X\log(\X)) \nonumber
\\ & = -\trace(\X\log(\X))+\trace(\X\log(\X))+\eta\langle\X,\widetilde{\nabla}\rangle \label{ineq:middleOfProof1}
\\ & = \eta\langle\X^*,\nabla f(\X^*)\rangle + \eta\left(\langle\X-\X^*,\nabla f(\X^*)\rangle+\langle\X,\widetilde{\nabla}-\nabla f(\X^*)\rangle\right) \nonumber
\\ & \underset{(d)}{=} \eta\lambda_{n}(\nabla f(\X^*)) + \eta\left(\langle\X-\X^*,\nabla f(\X^*)\rangle+\langle\X,\widetilde{\nabla}-\nabla f(\X^*)\rangle\right) \nonumber
\\ & \le \eta\lambda_{n}(\nabla f(\X^*)) + \eta\Vert\X-\X^*\Vert_*\Vert\nabla f(\X^*)\Vert_2+\eta\Vert\X\Vert_*\Vert\widetilde{\nabla}-\nabla f(\X^*)\Vert_2 \nonumber
\\ & \underset{(e)}{\le} \eta\lambda_{n}(\nabla f(\X^*)) + \eta\Vert\X-\X^*\Vert_*\Vert\nabla f(\X^*)\Vert_2+\eta(\xi+\beta\Vert\X-\X^*\Vert_*) \label{ineq:StochasticInProofThirdTerm}
\end{align} 
where (a) follows since $-\log(\cdot)$ is convex, (b) follows from the von Neumann inequality, (c) follows since $\X$ and $\log(\X)$ have the same eigen-vectors, (d) follows from Lemma \ref{lemma:opt-grad}, and (e) follows from \eqref{ineq:StochasticNorm-nablat-gradopt} and since $\Vert\X\Vert_*=1$.

Plugging \eqref{ineq:StochasticInProofSecondTerm} and \eqref{ineq:StochasticInProofThirdTerm} into \eqref{ineq:StochasticLittleEigBound}, we get
\begin{align*}
\log\left(\frac{(n-r)\lambda_{r+1}(\Y)}{\varepsilon \sum_{i=1}^n \lambda_i(\Y)}\right)
& \le -\eta\delta +\log\left(\frac{\varepsilon_{-}}{\varepsilon}\right)+ 2\eta\xi \\
&~+ \eta\left(2\beta+\left(1+\frac{2\sqrt{2r^*}}{\lambda_{r^*}(\X^*)}\right)\Vert\nabla f(\X^*)\Vert_{2}\right)\Vert\X-\X^*\Vert_*
\\ & \le -\eta\delta +\log\left(\frac{\varepsilon_{-}}{\varepsilon}\right)+ 2\eta\xi + \sqrt{2}\eta\left(2\beta+\left(1+\frac{2\sqrt{2r^*}}{\lambda_{r^*}(\X^*)}\right)G\right)\sqrt{\breg(\X^*,\X)},
\end{align*}
where the last inequality follows from \eqref{ineq:strongConvexityOfBregmanDistance} due to the $1$-strong convexity of $\breg(\cdot,\cdot)$ and the definition of $G$.

Therefore, if $\X$ is a matrix such that
\begin{equation*} 
\sqrt{\breg(\X^*,\X)} \le \frac{1}{\sqrt{2}}\left[2\beta+\left(1+\frac{2\sqrt{2r^*}}{\lambda_{r^*}(\X^*)}\right)G\right]^{-1}\left(\delta-\frac{1}{\eta}\log\left(\frac{\varepsilon_{-}}{\varepsilon}\right)+\frac{2\varepsilon}{\eta}-2\xi\right),
\end{equation*}
then
\begin{align*}
\log\left(\frac{(n-r)\lambda_{r+1}(\Y)}{\varepsilon \sum_{i=1}^n \lambda_i(\Y)}\right)\le 2\varepsilon.
\end{align*}

\end{proof}

Since our provable convergence results only apply from a ``warm-start" initialization, we present the following lemma which provides certain conditions on the initialization parameters $(\X_0,\varepsilon_0)$ in Algorithm \ref{alg:LRMD}, which are used to produce the first iterate $\X_1$, so that it is close enough to an optimal solution of interest. Unfortunately, without further assumptions on the objective function $f(\cdot)$, we do not have a simple procedure that can provably generate initialization parameters that satisfy these conditions.

\begin{lemma}[Warm-start Initialization]\label{lem:warmstart}
Let $\X^*\in\mX^*$ be an optimal solution to Problem \eqref{eq:Model}. Let $\rank(\X^*):=r^*$, and let $\X\in\mS_n$ be such that $\rank(\X) \geq \rank(\X^*)$. Let $\varepsilon\in(0,3/4]$ and fix some $R>0$. Suppose
\begin{align}\label{eq:init:highRank}
\trace(\X^*\log(\X^*) - \X^*\V_{\X}\log(\Lambda_{\X})\V_{\X}^{\top}) +  \lambda_1(\X^*)\log\left({\frac{n}{\varepsilon}}\right)(r^*-\Vert{\V_{\X^*}^{\top}\V_{\X}}\Vert_F^2)  + 4\varepsilon \leq R^2
\end{align}
holds, where $\V_{\X}\Lambda_{\X}\V_{\X}^{\top}$ denotes the compact-form eigen-decomposition\footnote{The compact-form eigen-decomposition of a rank-$r$ matrix $\M$ is written as $\M=\V_r\Lambda_r{\V_r}^{\top}$, where $\V_r\in\reals^{n\times{}r}$ stores the eigenvectors of $\M$ associated with nonzero eigenvalues, and $\Lambda_r\in\reals^{r\times{}r}$ is a diagonal matrix whose diagonal entries are exactly the nonzero eigenvalues of $\M$.}  of $\X$ (i.e., $\Lambda_{\X}\succ\mathbf{0}$), and similarly $\V_{\X^*}\Lambda_{\X^*}\V_{\X^*}^{\top}$ denotes the compact-form eigen-decomposition of $\X^*$. Then, the matrix $\W = (1-\varepsilon)\X + \frac{\varepsilon}{n }\I$ satisfies $\breg(\X^*,\W) \leq R^2$. 

Moreover, if $\rank(\X) = \rank(\X^*)$ Then, the condition in \eqref{eq:init:highRank} can be replaced with
\begin{align}\label{eq:init:equalRank}
\trace(\X^*\log(\X^*) - \X^*\V_{\X}\log(\Lambda_{\X})\V_{\X}^{\top}) +  \frac{2\lambda_1(\X^*)}{\lambda_{r^*}(\X^*)^2}\log\left({\frac{n}{\varepsilon}}\right)\Vert{\X^*-\X}\Vert_F^2  + 4\varepsilon \leq R^2.
\end{align}
\end{lemma}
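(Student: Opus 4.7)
The plan is to derive $\breg(\X^*,\W)$ directly from a spectral decomposition of $\W$. Writing $k=\rank(\X)\ge r^*$ and extending $\V_{\X}\in\reals^{n\times k}$ to an orthonormal basis of $\reals^n$, the matrix $\W=(1-\varepsilon)\X+(\varepsilon/n)\I$ decomposes as
\[
\W = \V_{\X}\bigl((1-\varepsilon)\Lambda_{\X}+(\varepsilon/n)\I_{k}\bigr)\V_{\X}^{\top} + (\varepsilon/n)(\I-\V_{\X}\V_{\X}^{\top}),
\]
which immediately furnishes the eigendecomposition of $\W$ and hence
\[
\log\W = \V_{\X}\log\bigl((1-\varepsilon)\Lambda_{\X}+(\varepsilon/n)\I_{k}\bigr)\V_{\X}^{\top} + \log(\varepsilon/n)(\I-\V_{\X}\V_{\X}^{\top}).
\]
Plugging this into the definition $\breg(\X^*,\W)=\trace(\X^*\log\X^*-\X^*\log\W)$ and using $\trace(\X^*)=1$ splits $\breg(\X^*,\W)$ into an ``eigenvalue'' contribution involving $\log((1-\varepsilon)\Lambda_{\X}+(\varepsilon/n)\I_{k})$ plus a ``subspace'' contribution equal to $\log(n/\varepsilon)\cdot\trace(\X^*(\I-\V_{\X}\V_{\X}^{\top}))$.

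Next I would eliminate the additive shift $(\varepsilon/n)\I_{k}$ inside the logarithm via operator monotonicity: since $(1-\varepsilon)\Lambda_{\X}+(\varepsilon/n)\I_{k}\succeq(1-\varepsilon)\Lambda_{\X}$, we obtain $-\log((1-\varepsilon)\Lambda_{\X}+(\varepsilon/n)\I_{k})\preceq-\log(1-\varepsilon)\I_{k}-\log(\Lambda_{\X})$. Taking the trace inner product against the PSD matrix $\V_{\X}^{\top}\X^*\V_{\X}$ converts this into the target term $-\trace(\X^*\V_{\X}\log(\Lambda_{\X})\V_{\X}^{\top})$ at the cost of the residual $-\log(1-\varepsilon)\trace(\X^*\V_{\X}\V_{\X}^{\top})\le -\log(1-\varepsilon)\le 4\varepsilon$, where the last inequality uses $\varepsilon\in(0,3/4]$ together with $\trace(\X^*\V_{\X}\V_{\X}^{\top})\le\trace(\X^*)=1$.

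For the subspace contribution I would write $\X^*=\V_{\X^*}\Lambda_{\X^*}\V_{\X^*}^{\top}$ and use $\Lambda_{\X^*}\preceq\lambda_1(\X^*)\I_{r^*}$, together with positive semidefiniteness of $\V_{\X^*}^{\top}(\I-\V_{\X}\V_{\X}^{\top})\V_{\X^*}$, to obtain
\[
\trace(\X^*(\I-\V_{\X}\V_{\X}^{\top})) \le \lambda_1(\X^*)\bigl(r^*-\Vert\V_{\X^*}^{\top}\V_{\X}\Vert_F^2\bigr).
\]
Assembling these ingredients delivers exactly \eqref{eq:init:highRank}. For the equal-rank refinement \eqref{eq:init:equalRank}, I would employ the identity $r^*-\Vert\V_{\X^*}^{\top}\V_{\X}\Vert_F^2 = \tfrac{1}{2}\Vert\V_{\X}\V_{\X}^{\top}-\V_{\X^*}\V_{\X^*}^{\top}\Vert_F^2$ (for $n\times r^*$ orthonormal matrices) and then apply the Davis--Kahan $\sin\theta$ theorem, exactly as already done in the proof of Lemma \ref{lemma:StochasticRadius}, to deduce $\Vert\V_{\X}\V_{\X}^{\top}-\V_{\X^*}\V_{\X^*}^{\top}\Vert_F\le 2\Vert\X-\X^*\Vert_F/\lambda_{r^*}(\X^*)$; the relevant spectral gap is $\lambda_{r^*}(\X^*)-\lambda_{r^*+1}(\X^*)=\lambda_{r^*}(\X^*)$ because both matrices have rank exactly $r^*$. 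The main nuisance I expect is the bookkeeping of the extended eigendecomposition when $\rank(\X)>r^*$ and verifying that the operator-monotone-log step leaks nothing beyond the $-\log(1-\varepsilon)\le 4\varepsilon$ slack already allocated.
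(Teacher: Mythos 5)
Your proposal is correct and follows essentially the same route as the paper's proof: the same eigendecomposition of $\W$, the same operator-monotonicity step to drop the $(\varepsilon/n)$ shift at the cost of $-\log(1-\varepsilon)\le 4\varepsilon$, the same bound $\trace(\X^*(\I-\V_{\X}\V_{\X}^{\top}))\le\lambda_1(\X^*)(r^*-\Vert\V_{\X^*}^{\top}\V_{\X}\Vert_F^2)$, and the same projection-distance identity plus Davis--Kahan argument for the equal-rank refinement.
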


\begin{proof}
From simple calculations we have
\begin{align*}
-\log(\W) &= -\log\left({(1-\varepsilon)\X + \frac{\varepsilon}{n}\I}\right)  \\
&= -\V_{\X}\log\left({(1-\varepsilon)\Lambda_{\X}+\frac{\varepsilon}{n}\I}\right)\V_{\X}^{\top} - \log\left({\frac{\varepsilon}{n}}\right)(\I-\V_{\X}\V_{\X}^{\top}) \\
&\preceq -\V_{\X}\log\left({(1-\varepsilon)\Lambda_{\X}}\right)\V_{\X}^{\top} - \log\left({\frac{\varepsilon}{n}}\right)(\I-\V_{\X}\V_{\X}^{\top})\\
&=-\V_{\X}\left({\log(1-\varepsilon)\I+\log(\Lambda_{\X})}\right)\V_{\X}^{\top} + \log\left({\frac{n}{\varepsilon}}\right)(\I-\V_{\X}\V_{\X}^{\top}) \\
&\preceq -\log(1-\varepsilon)\I - \V_{\X}\log(\Lambda_{\X})\V_{\X}^{\top} + \log\left({\frac{n}{\varepsilon}}\right)(\I-\V_{\X}\V_{\X}^{\top}).
\end{align*}

Thus,
\begin{align*}
B(\X^*,\W) &= \trace(\X^*\log(\X^*) - \X^*\log(\W)) \\
&\leq \trace\left({\X^*\log(\X^*)  -\X^*\V_{\X}\log(\Lambda_{\X})\V_{\X}^{\top}}\right)\\
&~ - \trace\left({\X^*\left({\log(1-\varepsilon)\I-\log\left({\frac{n}{\varepsilon}}\right)(\I-\V_{\X}\V_{\X}^{\top})}\right)}\right)\\
&=\trace(\X^*\log(\X^*) - \X^*\V_{\X}\log(\Lambda_{\X})\V_{\X}^{\top})+\log\left({\frac{n}{\varepsilon}}\right)\trace\left({\X^*(\I-\V_{\X}\V_{\X}^{\top})}\right)-\log(1-\varepsilon).
\end{align*}

We continue to bound the last two terms. 
\begin{align*}
\trace\left({\X^*(\I-\V_{\X}\V_{\X}^{\top})}\right) &= \trace\left({\V_{\X^*}\Lambda_{\X^*}\V_{\X^*}^{\top}(\I-\V_{\X}\V_{\X}^{\top})}\right)\\
&= \trace\left({\Lambda_{\X^*}(\I-\V_{\X^*}^{\top}\V_{\X}\V_{\X}^{\top}\V_{\X^*})}\right)\\
&\leq \lambda_1(\X^*)\trace\left({\I-\V_{\X^*}^{\top}\V_{\X}\V_{\X}^{\top}\V_{\X^*}}\right)\\
&=\lambda_1(\X^*)\left({r^*-\trace\left({\V_{\X^*}^{\top}\V_{\X}\V_{\X}^{\top}\V_{\X^*}}\right)}\right)\\
&= \lambda_1(\X^*)\left({r^*-\Vert{\V_{\X^*}^{\top}\V_{\X}}\Vert_F^2}\right).
\end{align*}

Also, for all $\varepsilon\in(0,3/4]$ we have $- \log(1-\varepsilon) \leq \frac{\varepsilon}{1-\varepsilon} \leq 4\varepsilon$.

Thus, we conclude that
\begin{align*}
B(\X^*,\W) &\leq\trace(\X^*\log(\X^*) - \X^*\V_{\X}\log(\Lambda_{\X})\V_{\X}^{\top}) +   \log\left({\frac{n}{\varepsilon}}\right)\lambda_1(\X^*)\left({r^*-\Vert{\V_{\X^*}^{\top}\V_{\X}}\Vert_F^2}\right)  + 4\varepsilon.
\end{align*}

In case $\rank(\X)=\rank(\X^*)$ we can replace the upper-bound on $\trace\left({\X^*(\I-\V_{\X}\V_{\X}^{\top})}\right)$ with the following:
\begin{align*}
\trace\left({\X^*(\I-\V_{\X}\V_{\X}^{\top})}\right) &= \trace\left({\V_{\X^*}\Lambda_{\X^*}\V_{\X^*}^{\top}(\I-\V_{\X}\V_{\X}^{\top})}\right)\\
&= \trace\left({\Lambda_{\X^*}(\I-\V_{\X^*}^{\top}\V_{\X}\V_{\X}^{\top}\V_{\X^*})}\right)\\
&\leq \lambda_1(\X^*)\trace\left({\I-\V_{\X^*}^{\top}\V_{\X}\V_{\X}^{\top}\V_{\X^*}}\right)\\
&= \lambda_1(\X^*)\left({r^*-\trace\left({\V_{\X^*}\V_{\X^*}^{\top}\V_{\X}\V_{\X}^{\top}}\right)}\right)\\
&= \frac{\lambda_1(\X^*)}{2}\Vert{\V_{\X^*}\V_{\X^*}^{\top}-\V_{\X}\V_{\X}^{\top}}\Vert_F^2\\
&\leq \frac{2\lambda_1(\X^*)}{\lambda_{r^*}(\X^*)^2}\Vert{\X^*-\X}\Vert_F^2,
\end{align*}
where the last inequality follows from Lemma \ref{lemma:DavisKahan}.

\end{proof}

\section{Provable Local Convergence with Deterministic Updates under Strict Complementarity} \label{section:smoothCase}

In this section we prove a convergence rate of $O(1/t)$ for our  Algorithm \ref{alg:LRMD} from ``warm-start" initialization when exact gradients are used and the strict complementarity condition holds.

In the following lemma we show that there is a decrease in the Bregman distance between the iterates of the algorithm and the optimal solution up to a small error. This step is important since as discussed in Section \ref{section:localConvergence} we can only provably control the errors due to the low rank updates in a certain ball around an optimal solution, and hence, we must show that when the algorithm is initialized inside this ball, the iterates do not leave it. 

\begin{lemma} \label{lemma:bregmanDecrease}
Let $\X^*\in\mX^*$ be an optimal solution for which Assumption \ref{ass:strictcomp} holds with $\delta >0$. Let $\{\X_t\}_{t\ge1}$ be the sequence generated by the low rank MEG algorithm, Algorithm \ref{alg:LRMD}, on Problem \eqref{eq:Model} with $n\not=r\ge r^*=\rank(\X^*)$, $\eta_t\le\frac{1}{\beta}$ and $\varepsilon_t\in(0,3/4]$ such that $\lambda_{r^*+1}(\X_t)\le\frac{\varepsilon_{t-1}}{n-r}$ for all $t\ge1$. Let $\Z_{t+1}$ be the standard MEG step update to $\X_t$, as defined in \eqref{eq:MDvonNeumannUpdate}. Then for any $t\ge1$, if $\X_t$ satisfies the condition in \eqref{eq:StochasticRadiusOfconvergence} with $\xi=0$, it holds that
\[ \breg(\X^*,\X_{t+1}) \le \breg(\X^*,\X_{t})+2\varepsilon_t. \]
\end{lemma}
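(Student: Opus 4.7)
The plan is to split
\[
\breg(\X^*,\X_{t+1}) \;=\; \breg(\X^*,\Z_{t+1}) \;+\; \bigl[\breg(\X^*,\X_{t+1}) - \breg(\X^*,\Z_{t+1})\bigr],
\]
and bound the two pieces separately. The first piece captures the decrease from one \emph{exact} MEG step, and the second piece captures the additional error introduced by replacing $\Z_{t+1}$ with its low-rank surrogate $\X_{t+1}$ defined in \eqref{eq:LowRankMDvonNeumannUpdate}.

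For the first piece I would mimic the beginning of the proof of Theorem \ref{thm:converganceSmooth}. The optimality condition for $\Z_{t+1}$ combined with the three-point identity \eqref{lemma:threePointLemma}, applied with $\X = \X^*$, gives \eqref{ineq:smoothOptZ(t+1)}, i.e.
\[
\eta_t\langle\nabla f(\X_t),\Z_{t+1}-\X^*\rangle \;\le\; \breg(\X^*,\X_t)-\breg(\X^*,\Z_{t+1})-\breg(\Z_{t+1},\X_t).
\]
Using $\beta$-smoothness together with \eqref{ineq:strongConvexityOfBregmanDistance} yields $f(\Z_{t+1}) \le f(\X_t) + \langle\nabla f(\X_t),\Z_{t+1}-\X_t\rangle + \beta\breg(\Z_{t+1},\X_t)$, and convexity of $f$ gives $f(\X_t) \le f(\X^*) + \langle\nabla f(\X_t),\X_t-\X^*\rangle$. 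Combining these three inequalities produces
\[
\eta_t\bigl(f(\Z_{t+1})-f(\X^*)\bigr) \;\le\; \breg(\X^*,\X_t) - \breg(\X^*,\Z_{t+1}) + (\eta_t\beta - 1)\breg(\Z_{t+1},\X_t).
\]
Since $\eta_t \le 1/\beta$ makes the last term non-positive and $f(\Z_{t+1}) \ge f(\X^*)$, I conclude $\breg(\X^*,\Z_{t+1}) \le \breg(\X^*,\X_t)$.

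For the second piece I would invoke Lemma \ref{lemma:StochasticRadius} with $\X = \X_t$, $\widetilde{\nabla} = \nabla f(\X_t)$ (so $\xi = 0$), and $\varepsilon_{-} = \varepsilon = \varepsilon_t$. The standing hypotheses of the present lemma are exactly the two hypotheses that Lemma \ref{lemma:StochasticRadius} requires: $\lambda_{r^*+1}(\X_t)\le \varepsilon_t/(n-r)$ is assumed directly, and the radius condition \eqref{eq:StochasticRadiusOfconvergence} is assumed as well (and in this parameter choice the $\log(\varepsilon_{-}/\varepsilon)$ term vanishes). Its conclusion gives
\[
\breg(\X^*,\X_{t+1}) - \breg(\X^*,\Z_{t+1}) \;\le\; 2\varepsilon_t.
\]
Adding the two displayed bounds yields the claim.

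There is essentially no technical obstacle here, since all the heavy lifting is absorbed into Lemma \ref{lemma:StochasticRadius}; the only care required is to (i) match the parameters $\varepsilon_{-}, \varepsilon, \xi$ when invoking that lemma, and (ii) verify that an exact MEG step with $\eta_t \le 1/\beta$ is non-expansive in the Bregman distance to $\X^*$, which is the standard descent-lemma calculation recalled above.
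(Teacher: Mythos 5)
Your proposal is correct and follows essentially the same route as the paper: the exact-step non-expansiveness $\breg(\X^*,\Z_{t+1})\le\breg(\X^*,\X_t)$ is derived from the optimality condition for $\Z_{t+1}$, the three-point identity, $\beta$-smoothness with \eqref{ineq:strongConvexityOfBregmanDistance}, convexity, and $\eta_t\le 1/\beta$ (the paper merely combines these same ingredients in a slightly different order), and the remaining gap $\breg(\X^*,\X_{t+1})-\breg(\X^*,\Z_{t+1})\le 2\varepsilon_t$ is obtained, exactly as in the paper, by invoking Lemma \ref{lemma:StochasticRadius} with $\xi=0$.
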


\begin{proof}
From the optimality of $\X^*$, $\beta$-smoothness of $f$ and plugging in \eqref{ineq:strongConvexityOfBregmanDistance} due to the $1$-strong convexity of $\breg(\cdot,\cdot)$, it holds that
\begin{align*}
f(\X^*) & \le f(\Z_{t+1}) \le f(\X_t)+\langle \nabla f(\X_t),\Z_{t+1}-\X_t\rangle+\frac{\beta}{2}\Vert\Z_{t+1}-\X_t\Vert_F^2
\\ & \le f(\X_t)+\langle \nabla f(\X_t),\Z_{t+1}-\X_t\rangle+\frac{\beta}{2}\Vert\Z_{t+1}-\X_t\Vert_*^2
\\ & \le f(\X_t)+\langle \nabla f(\X_t),\Z_{t+1}-\X_t\rangle+\beta\breg(\Z_{t+1},\X_{t}).
\end{align*}

Therefore,
\begin{align} \label{ineq:partOfFejerProof}
\langle \nabla f(\X_t),\Z_{t+1}-\X_t\rangle \ge f(\X^*)-f(\X_t)-\beta\breg(\Z_{t+1},\X_{t}).
\end{align}

By the definition of $\Z_{t+1}$ in \eqref{eq:MDGeneralUpdate}, 
\[ \Z_{t+1}=\argmin_{\Z\in\mathcal{S}_n}\{\langle\eta_t\nabla f(\X_t)-\nabla\omega(\X_t),\Z\rangle+\omega(\Z)\}. \]

Therefore, by the optimality condition for $\Z_{t+1}$ it holds that
\begin{align*}
0 & \ge \langle -\eta_t\nabla f(\X_t)+\nabla\omega(\X_t)-\nabla\omega(\Z_{t+1}),\X^*-\Z_{t+1}\rangle
\\ & = \langle \nabla\omega(\X_t)-\nabla\omega(\Z_{t+1}),\X^*-\Z_{t+1}\rangle + \langle \eta_t\nabla f(\X_t),\Z_{t+1}-\X_t\rangle + \langle \eta_t\nabla f(\X_t),\X_t-\X^*\rangle
\\ & \underset{(a)}{=} \breg(\X^*,\Z_{t+1})+\breg(\Z_{t+1},\X_{t})-\breg(\X^*,\X_{t}) + \langle \eta_t\nabla f(\X_t),\Z_{t+1}-\X_t\rangle + \langle \eta_t\nabla f(\X_t),\X_t-\X^*\rangle 
\\ & \underset{(b)}{\ge} \breg(\X^*,\Z_{t+1})+\breg(\Z_{t+1},\X_{t})-\breg(\X^*,\X_{t}) + \eta_t\left[f(\X^*)-f(\X_t)-\beta\breg(\Z_{t+1},\X_{t})\right]
\\ & \ \ \ + \langle \eta_t\nabla f(\X_t),\X_t-\X^*\rangle 
\\ & \underset{(c)}{\ge} \breg(\X^*,\Z_{t+1})-\breg(\X^*,\X_{t}) + \eta_t\left(f(\X^*)-f(\X_t) + \langle\nabla f(\X_t),\X_t-\X^*\rangle\right).
\end{align*}
Here (a) follows from the three point lemma in \eqref{lemma:threePointLemma}, (b) follows from plugging in \eqref{ineq:partOfFejerProof}, and (c) holds since $\eta_t\le\frac{1}{\beta}$.

From the gradient inequality $f(\X^*)-f(\X_t) + \langle\nabla f(\X_t),\X_t-\X^*\rangle\ge0$, and therefore, 
\[ \breg(\X^*,\Z_{t+1}) \le \breg(\X^*,\X_{t}). \]

If $\X_t$ satisfies the condition in \eqref{eq:StochasticRadiusOfconvergence} with $\xi=0$ and an exact gradient, then from Lemma \ref{lemma:StochasticRadius}
\begin{align*}
\breg(\X^*,\X_{t+1}) = \breg(\X^*,\Z_{t+1})+\breg(\X^*,\X_{t+1})-\breg(\X^*,\Z_{t+1}) \le \breg(\X^*,\X_{t})+2\varepsilon_t.
\end{align*}

\end{proof}

We can now finally derive a concrete convergence rate for Algorithm \ref{alg:LRMD} with exact gradients. For simplicity and ease of presentation, in the following theorem we consider the case in which the SVD rank parameter in Algorithm \ref{alg:LRMD} is set exactly to that of a corresponding optimal solution, i.e., $r=\rank(\X^*)$. In the sequel (see Theorem \ref{thm:smoothCaseGen}) we consider the more general case in which $r\geq \rank(\X^*)$.

\begin{theorem}\label{thm:smoothCaseDeminish}
Fix an optimal solution $\X^*$ to Problem \eqref{eq:Model} for which Assumption \ref{ass:strictcomp} holds with parameter $\delta>0$ and denote $r^*=\rank(\X^*)$.
Let $\{\X_t\}_{t\geq 1}$ be the sequence of iterates generated by Algorithm \ref{alg:LRMD} with deterministic updates and with a fixed step-size $\eta_t=\eta=1/\beta$, and with SVD rank parameter $r=r^*$. Suppose that for all $t\geq 0$: $\varepsilon_t=\frac{\tilde{\varepsilon}_0}{2\max\{G^2,1\}}\frac{1}{(t+1+c)^3}$ for some $\tilde{\varepsilon}_0\leq R_0^2$,
where
\begin{align*}
R_0 := \frac{1}{4}\left[2\beta+\left(1+\frac{2\sqrt{2r^*}}{\lambda_{r^*}(\X^*)}\right)G\right]^{-1}\delta,
\end{align*}
$c\ge 6\beta/\delta$, and $G\geq\sup_{\X\in\mathcal{S}_n}\Vert \nabla f(\X)\Vert_2$. Finally, assume the initialization matrix $\X_0$ satisfies $\rank(\X_0) = r^*$ and the parameters $\X_0,\X^*,\tilde{\varepsilon}_0,R_0$ satisfy the initialization condition in Eq. \eqref{eq:init:equalRank} (substituting $\X=\X_0$, $\varepsilon=\tilde{\varepsilon}_0$, $R=R_0$).
Then, for any $T\geq 1$ it holds that
\begin{align*}
\min_{1\le t\le {T}}f(\X_t) -f(\X^*) & \le \left(\left(1+\frac{1}{2\max\{G^2,1\}}\right)\beta{}R_0^2+4R_0\right)\frac{1}{T}.
\end{align*}
\end{theorem}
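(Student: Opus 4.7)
The plan is to combine the inexact-MEG convergence bound of Theorem \ref{thm:converganceSmooth} with the per-iteration approximation control in Lemmas \ref{lemma:StochasticRadius}--\ref{lemma:bregmanDecrease}, after first showing by induction that every iterate of Algorithm \ref{alg:LRMD} lies in a Bregman ball around $\X^*$ on which those bounds apply. The equal-rank warm-start hypothesis \eqref{eq:init:equalRank} together with Lemma \ref{lem:warmstart} (applied with $\X=\X_0$, $\varepsilon=\tilde\varepsilon_0$, $R=R_0$) immediately yields $\breg(\X^*,\X_1)\le R_0^2$. Moreover, the update \eqref{eq:LowRankMDvonNeumannUpdate} with $r=r^*$ forces $\lambda_{r^*+1}(\X_t)=\varepsilon_{t-1}/(n-r^*)$ for every $t\ge 2$ (and $\lambda_{r^*+1}(\X_1)=\varepsilon_0/n\le \varepsilon_0/(n-r^*)$), so the tail-eigenvalue precondition of Lemmas \ref{lemma:StochasticRadius}--\ref{lemma:bregmanDecrease} holds at every iterate.

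The inductive invariant I would carry is $\breg(\X^*,\X_t)\le 2R_0^2$ for all $t\ge 1$. The nontrivial point is verifying the radius condition \eqref{eq:StochasticRadiusOfconvergence}. With $\xi=0$, $\eta=1/\beta$, and $c\ge 6\beta/\delta$, one has
\[
\tfrac{1}{\eta}\log(\varepsilon_{t-1}/\varepsilon_t)=3\beta\log\!\bigl(1+\tfrac{1}{t+c}\bigr)\le\tfrac{3\beta}{c}\le\tfrac{\delta}{2},
\]
so the RHS of \eqref{eq:StochasticRadiusOfconvergence} is at least
\[
\tfrac{1}{\sqrt 2}\bigl[2\beta+(1+2\sqrt{2r^*}/\lambda_{r^*}(\X^*))G\bigr]^{-1}\!\cdot\tfrac{\delta}{2}=\sqrt 2\,R_0,
\]
which is exactly compatible with the inductive hypothesis $\sqrt{\breg(\X^*,\X_t)}\le\sqrt 2\,R_0$. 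Lemma \ref{lemma:bregmanDecrease} then gives $\breg(\X^*,\X_{t+1})\le\breg(\X^*,\X_t)+2\varepsilon_t$, and telescoping with the integral estimate $\sum_{s\ge 1}(s+1+c)^{-3}\le 1/(2(1+c)^2)$ yields
\[
\breg(\X^*,\X_{t+1})\le R_0^2+\frac{\tilde\varepsilon_0}{\max\{G^2,1\}(1+c)^2}\le 2R_0^2,
\]
which closes the induction (since $\tilde\varepsilon_0\le R_0^2$).

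With the invariant secured, Lemma \ref{lemma:StochasticRadius} delivers $\breg(\X^*,\X_{t+1})-\breg(\X^*,\Z_{t+1})\le 2\varepsilon_t$ and $\breg(\Z_{t+1},\X_{t+1})\le 2\varepsilon_t$ for every $t$. Plugging these, together with $\breg(\X^*,\X_1)\le R_0^2$, into Theorem \ref{thm:converganceSmooth} with $\eta=1/\beta$ gives
\[
\min_{1\le t\le T}f(\X_t)-f(\X^*)\le\frac{\beta R_0^2}{T}+\frac{2\beta}{T}\sum_{t=1}^{T-1}\varepsilon_t+\frac{2\sqrt 2\,G}{T}\sum_{t=1}^{T-1}\sqrt{\varepsilon_t}.
\]
Integral estimates on the two tails then give $\frac{2\beta}{T}\sum\varepsilon_t\le \beta R_0^2/(2T\max\{G^2,1\})$ and, using $\sum_{t\ge 1}(t+1+c)^{-3/2}\le 2$ together with $G/\sqrt{\max\{G^2,1\}}\le 1$, $\frac{2\sqrt 2\,G}{T}\sum\sqrt{\varepsilon_t}\le 4R_0/T$. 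Adding the three contributions produces the stated bound.

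The main obstacle is the joint calibration of $\varepsilon_t$: it must decay fast enough that $\sum\varepsilon_t$ and $\sum\sqrt{\varepsilon_t}$ are finite (so the accumulated approximation error contributes only $O(1/T)$), yet slowly enough that $\log(\varepsilon_{t-1}/\varepsilon_t)$ stays at most $\delta/(2\beta)$ so the radius of convergence in \eqref{eq:StochasticRadiusOfconvergence} never falls below $R_0$. The cubic decay $\varepsilon_t\propto(t+1+c)^{-3}$ with $c\ge 6\beta/\delta$ is precisely the threshold that reconciles both demands.
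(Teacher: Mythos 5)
Your proposal is correct and follows essentially the same route as the paper's proof: warm-start via Lemma \ref{lem:warmstart} to get $\breg(\X^*,\X_1)\le R_0^2$, an induction maintaining $\breg(\X^*,\X_t)\le 2R_0^2$ (where the calibration $c\ge 6\beta/\delta$ keeps $\tfrac{1}{\eta}\log(\varepsilon_{t-1}/\varepsilon_t)\le\delta/2$ so the radius in \eqref{eq:StochasticRadiusOfconvergence} stays at $\sqrt{2}R_0$, and Lemma \ref{lemma:bregmanDecrease} telescopes the accumulated $2\varepsilon_t$ errors), and finally Theorem \ref{thm:converganceSmooth} combined with Lemma \ref{lemma:StochasticRadius} and the integral estimates on $\sum\varepsilon_t$ and $\sum\sqrt{\varepsilon_t}$. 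The constants and the final bound match the paper's; no gaps.
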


\begin{proof}
We first observe that by our choice of the sequence $\{\varepsilon_t\}_{t\geq 0}$ we have that the sequence $\left\{-\frac{1}{\eta}\log\left(\frac{\varepsilon_{t-1}}{\varepsilon_{t}}\right)\right\}_{t\geq 1}$ is monotone non-decreasing and thus, for all $t\geq 1$:
\begin{align*}
 -\frac{1}{\eta}\log\left(\frac{\varepsilon_{t-1}}{\varepsilon_{t}}\right) &\ge -\frac{1}{\eta}\log\left(\frac{\varepsilon_{0}}{\varepsilon_{1}}\right)=-3\beta\log\left(\frac{c+2}{c+1}\right) \geq -3\beta\log\left(1 + \frac{1}{c}\right) \geq -\frac{3\beta}{c} \geq -\frac{\delta}{2},
\end{align*}
where the last inequality follows from plugging our choice for $c$.

Thus, we have that for all $t\geq 1$:
\begin{align}\label{eq:thm:smoothConv:1}
&\frac{1}{2\sqrt{2}}\left[2\beta+\left(1+\frac{2\sqrt{2r^*}}{\lambda_{r^*}(\X^*)}\right)G\right]^{-1}\delta \le \frac{1}{\sqrt{2}}\left[2\beta+\left(1+\frac{2\sqrt{2r^*}}{\lambda_{r^*}(\X^*)}\right)G\right]^{-1}\left(\delta-\frac{1}{\eta}\log\left(\frac{\varepsilon_{t-1}}{\varepsilon_{t}}\right)\right) .
\end{align}

Thus, in order to invoke Lemma \ref{lemma:StochasticRadius} for all $t\geq 1$ it suffices to prove that for all $t\geq 1$: i. $\lambda_{r^*+1}(\X_t) \leq \frac{\varepsilon_{t-1}}{n-r}$ and ii. $\sqrt{B(\X^*,\X_t)} \leq \textrm{LHS of } \eqref{eq:thm:smoothConv:1}$.

The requirement $\lambda_{r^*+1}(\X_t) \leq \frac{\varepsilon_{t-1}}{n-r}$ holds trivially by the design of the algorithm and since the SVD parameter satisfies $r=r^*$. 

We now prove by induction that indeed for all $t\geq 1$, $\sqrt{B(\X^*,\X_t)} \leq \textrm{LHS of } \eqref{eq:thm:smoothConv:1}$. The base case $t=1$ clearly holds due to the choice of initialization. Now, if the assumption holds for all $i\in\{1,...t\}$, then invoking Lemma \ref{lemma:bregmanDecrease} for all $i=1,...,t$ we have that
\begin{align} \label{ineq:recursionOfRadious}
\breg(\X^*,\X_{t+1}) & \le \breg(\X^*,\X_{t})+2\varepsilon_{t} \le \ldots \le \breg(\X^*,\X_{1})+2\sum_{i=1}^{t}\varepsilon_i \nonumber
\\ &  \leq \breg(\X^*,\X_{1})+\frac{\tilde{\varepsilon}_0}{\max\{G^2,1\}}\sum_{i=1}^{\infty}\frac{1}{(i+1)^3}  \le \breg(\X^*,\X_{1})+\tilde{\varepsilon}_0 \leq 2R_0^2,
\end{align}
where the last inequality follows from our initialization assumption and Lemma \ref{lem:warmstart}.

Thus, we obtain that 
\begin{align*}
\sqrt{\breg(\X^*,\X_{t+1})} & \le \sqrt{2}R_0  = \frac{1}{2\sqrt{2}}\left[2\beta+\left(1+\frac{2\sqrt{2r}}{\lambda_r(\X^*)}\right)G\right]^{-1}\delta,
\end{align*}
and the induction holds.

Invoking Lemma \ref{lemma:StochasticRadius} for all $t\geq 1$ guarantees that for all $t\ge 1$, $\breg(\Z_{t+1},\X_{t+1})\le 2\varepsilon_t$ and $\breg(\X^*,\X_{t+1})-\breg(\X^*,\Z_{t+1})\le 2\varepsilon_t$. Plugging-in $\eta=1/\beta$ and our initialization assumption, it holds from Theorem \ref{thm:converganceSmooth} that for all $T\geq 1$,
\begin{align*}
\min_{1\le t\le {T}}f(\X_t) -f(\X^*) & \le \frac{\beta{}R_0^2}{T}+\frac{2\beta}{T}\sum_{t=1}^{T-1}\varepsilon_t+\frac{2\sqrt{2}G}{T}\sum_{t=1}^{T-1}\sqrt{\varepsilon_t}.
\end{align*}

In order to bound the RHS of this inequality, we note that the following inequalities hold:
\begin{align*}
\sum_{t=1}^{T-1}\sqrt{\varepsilon_t} &\leq \sum_{t=1}^{T-1}\frac{R_0}{\sqrt{2}\max\{G,1\}}\frac{1}{(t+1+c)^{3/2}} < \frac{R_0}{\sqrt{2}\max\{G,1\}}\int_0^{\infty}\frac{1}{(t+1)^{3/2}}dt = \frac{\sqrt{2}R_0}{\max\{G,1\}}.\\
\sum_{t=1}^{T-1}\varepsilon_t &\leq \sum_{t=1}^{T-1}\frac{R_0^2}{2\max\{G^2,1\}}\frac{1}{(t+1+c)^{3}} < \frac{R_0^2}{2\max\{G^2,1\}}\int_0^{\infty}\frac{1}{(t+1)^{3}}dt = \frac{R_0^2}{4\max\{G^2,1\}}.
\end{align*}

Plugging-in these  bounds we obtain the rate in the theorem.
\end{proof}

One caveat of Theorem \ref{thm:smoothCaseDeminish} is that it requires exact knowledge of the rank of the optimal solution since it requires to set $r=\rank(\X^*)$ in Algorithm \ref{alg:LRMD}. We will now present an analysis that removes this limitation. 
  
Towards this end, we observe that from Lemma \ref{lemma:StochasticRadius}, in order for any $n\not=r>r^*$ to be suitable, it needs to satisfy the condition $\lambda_{r^*+1}(\X_t)\le\frac{\varepsilon_{t-1}}{n-r}$ (in the case of $r=r^*$ we have shown that it holds trivially). Thus,  we begin by upper-bounding $\lambda_{r^*+1}(\X_{t+1})$.
\begin{lemma}
Let $\X^*\in\mX^*$ be an optimal solution which satisfies Assumption \ref{ass:strictcomp} with some $\delta>0$ and denote $r^*=\rank(\X^*)$. Suppose the rank parameter in Algorithm \ref{alg:LRMD} satisfies $n\not=r>r^*$. Denote $G=\sup_{\X\in\mathcal{S}_n}\Vert \nabla f(\X)\Vert_2$. Then for any $t\ge1$ it holds that 
\begin{align} \label{ineq:boundOnLambda_rstarplus1}
\lambda_{r^*+1}(\X_{t+1}) \le \max\left\lbrace(1-\varepsilon_t)\lambda_{r^*+1}(\X_t)\exp\left(-\eta_t\delta+\eta_t \gamma \sqrt{\breg(\X^*,\X_t)}+\breg(\X^*,\X_t)\right),\frac{\varepsilon_t}{n-r}\right\rbrace,
\end{align}
where $\gamma=\sqrt{2}\left(2\beta+\frac{2\sqrt{2{r^*}}}{\lambda_{r^*}(\X^*)}G\right)$.

\end{lemma}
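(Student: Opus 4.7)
My plan is to follow the template of the proof of Lemma \ref{lemma:StochasticRadius}, with two main changes tailored to the fact that $\X_{t+1}$ is produced by the low-rank update \eqref{eq:LowRankMDvonNeumannUpdate} rather than the full MEG step: an initial combinatorial reduction exploiting the eigen-structure of $\X_{t+1}$, and a lower bound on the \emph{partial} trace $a_t=\sum_{i=1}^r\lambda_i(\Y_t)$ instead of the full trace $b_t=\trace(\Y_t)$ handled in Lemma \ref{lemma:StochasticRadius}.

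First, I would read off directly from \eqref{eq:LowRankMDvonNeumannUpdate} that the eigenvalues of $\X_{t+1}$ form the multiset $\{(1-\varepsilon_t)\lambda_i(\Y_t)/a_t\}_{i=1}^r$ together with $n-r$ copies of $\varepsilon_t/(n-r)$. Since $r\ge r^*+1$, a direct counting argument shows that at most $r^*$ of these values can strictly exceed $M:=\max\{(1-\varepsilon_t)\lambda_{r^*+1}(\Y_t)/a_t,\,\varepsilon_t/(n-r)\}$: among the top entries only those with index $\le r^*$ can exceed the $(r^*+1)$-st top entry, and no bottom entry can exceed $M$. Hence $\lambda_{r^*+1}(\X_{t+1})\le M$, and the task reduces to upper-bounding the ratio $\lambda_{r^*+1}(\Y_t)/a_t$.

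For the numerator $\lambda_{r^*+1}(\Y_t)$ I would re-run the Weyl--Ky Fan--Davis-Kahan chain already carried out in \eqref{ineq:StochasticInProofLambdaRPlus1}--\eqref{ineq:inProofInnerProd} in the proof of Lemma \ref{lemma:StochasticRadius}, specialized to $\widetilde{\nabla}=\nabla f(\X_t)$ (i.e.~$\xi=0$), obtaining
\[
\log\lambda_{r^*+1}(\Y_t)\le \log\lambda_{r^*+1}(\X_t)-\eta_t\lambda_{n-r^*}(\nabla f(\X^*))+\eta_t\left(\beta+\frac{2\sqrt{2r^*}}{\lambda_{r^*}(\X^*)}G\right)\|\X_t-\X^*\|_*.
\]
For the denominator I would exploit the exact identity $\log b_t = B(\X^*,\Z_{t+1})-B(\X^*,\X_t)-\eta_t\langle\X^*,\nabla f(\X_t)\rangle$, which follows from $\Z_{t+1}=\Y_t/b_t$ and the definition of the Bregman distance. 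Using $B(\X^*,\Z_{t+1})\ge 0$, $\langle\X^*,\nabla f(\X^*)\rangle=\lambda_n(\nabla f(\X^*))$ from Lemma \ref{lemma:opt-grad}, and $\beta$-smoothness then yields the clean bound $-\log b_t\le B(\X^*,\X_t)+\eta_t\lambda_n(\nabla f(\X^*))+\eta_t\beta\|\X_t-\X^*\|_*$, already containing the desired $+B$ term. To pass from $b_t$ to the partial trace $a_t$ I would invoke Mirsky's inequality $\sum_{i>r^*}\lambda_i(\Z_{t+1})\le \|\Z_{t+1}-\X^*\|_*$ (valid because $\lambda_i(\X^*)=0$ for $i>r^*\le r$), the Pinsker-type bound \eqref{ineq:strongConvexityOfBregmanDistance}, and the monotonicity $B(\X^*,\Z_{t+1})\le B(\X^*,\X_t)$ established inside the proof of Lemma \ref{lemma:bregmanDecrease}; together these give $a_t/b_t\ge 1-\sqrt{2B(\X^*,\X_t)}$ and a correspondingly small correction to $-\log a_t$.

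Adding the two bounds, the $-\eta_t\lambda_{n-r^*}(\nabla f(\X^*))$ and $+\eta_t\lambda_n(\nabla f(\X^*))$ contributions collapse into $-\eta_t\delta$ by Assumption \ref{ass:strictcomp}, the coefficient of $\|\X_t-\X^*\|_*$ becomes $\eta_t(2\beta+\frac{2\sqrt{2r^*}}{\lambda_{r^*}(\X^*)}G)$, and applying $\|\X_t-\X^*\|_*\le\sqrt{2B(\X^*,\X_t)}$ from \eqref{ineq:strongConvexityOfBregmanDistance} converts this into $\eta_t c\sqrt{B(\X^*,\X_t)}$, while the residual $B(\X^*,\X_t)$ provides exactly the additive $+B$ in the exponent of the lemma. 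Combining with the other branch of the max, $\varepsilon_t/(n-r)$, completes the proof. I expect the main obstacle to be the passage from $b_t$ to $a_t$: the Bregman identity for $\log b_t$ is very clean, but $\log(b_t/a_t)$ contributes an extra term of order $\sqrt{B(\X^*,\X_t)}$ whose absorption into the $\eta_t c\sqrt{B(\X^*,\X_t)}$ piece leans on the natural step-size choice $\eta_t=1/\beta$ (which makes $\eta_t c\ge 2\sqrt{2}$); for other step-sizes one would either slightly enlarge $c$ or leverage the specific low-rank structure of $\X_t$ inherited from the previous iterate.
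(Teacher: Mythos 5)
Your reduction of $\lambda_{r^*+1}(\X_{t+1})$ to $\max\{(1-\varepsilon_t)\lambda_{r^*+1}(\Y_t)/a_t,\ \varepsilon_t/(n-r)\}$ and your treatment of the numerator (re-running \eqref{ineq:StochasticInProofSecondTermWithoutAssumption} with $\xi=0$) coincide with the paper's argument. The gap is in the denominator. The lemma requires a lower bound on the \emph{partial} trace $a_t=\sum_{i=1}^r\lambda_i(\Y_t)$, and you obtain it by first bounding $-\log b_t$ through the Bregman identity and then correcting by $\log(b_t/a_t)\le-\log\bigl(1-\sqrt{2\breg(\X^*,\X_t)}\bigr)$ via Mirsky/Pinsker and $\breg(\X^*,\Z_{t+1})\le\breg(\X^*,\X_t)$. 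This correction term is the problem: (i) it is undefined (and your ratio bound vacuous) unless $2\breg(\X^*,\X_t)<1$, a restriction the lemma does not impose --- the statement holds for any $t\ge1$ with no locality or step-size assumption, whereas your monotonicity step additionally needs $\eta_t\le1/\beta$ from Lemma \ref{lemma:bregmanDecrease}; (ii) even in the local regime the term is of order $\sqrt{\breg(\X^*,\X_t)}$, not $\breg(\X^*,\X_t)$, and the budget $\eta_t c\sqrt{\breg(\X^*,\X_t)}$ with the lemma's exact constant $c=\sqrt{2}\bigl(2\beta+\tfrac{2\sqrt{2r^*}}{\lambda_{r^*}(\X^*)}G\bigr)$ is already fully consumed by the Weyl/Ky Fan/Davis--Kahan term and the two smoothness terms after applying \eqref{ineq:strongConvexityOfBregmanDistance}; there is no slack at $\eta_t=1/\beta$, so absorbing the extra piece forces a strictly larger $c$ (or an extra additive term and a smallness condition such as $2\breg(\X^*,\X_t)\le1/4$), i.e., you prove a weaker statement than \eqref{ineq:boundOnLambda_rstarplus1}.

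The paper avoids $b_t$ altogether. It writes $-\log a_t\le-\log\bigl(\sum_{i=1}^{r^*}\lambda_i(\Y_t)\bigr)$ and applies Jensen's inequality for $-\log$ with weights $\lambda_1(\X^*),\dots,\lambda_{r^*}(\X^*)$ --- which sum to one and are supported on exactly the top $r^*$ indices, so only the eigenvalues already inside the partial sum appear --- followed by the von Neumann inequality. This mirrors the third-term bound in Lemma \ref{lemma:StochasticRadius} but with the weights $\lambda_i(\X)$ replaced by $\lambda_i(\X^*)$, and it produces precisely the additive $\breg(\X^*,\X_t)$ in the exponent together with $\eta_t\lambda_n(\nabla f(\X^*))+\eta_t\beta\Vert\X_t-\X^*\Vert_*$, with no spurious $\sqrt{\breg}$ penalty and no positivity restriction. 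That substitution of weights is the missing idea; with it, your numerator bound and counting reduction combine to give the lemma exactly as stated.
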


\begin{proof}

We have seen in \eqref{ineq:StochasticInProofSecondTermWithoutAssumption} with $\xi=0$ that 
\begin{align*}
\log(\lambda_{r^*+1}(\Y_t)) \le \log\left(\lambda_{r^*+1}(\X_t)\right)-\eta_t \lambda_{n-r^*}(\nabla f(\X^*))+\eta_t\frac{2\sqrt{2r^*}G\Vert\X_t-\X^*\Vert_{*}}{\lambda_{r^*}(\X^*)} \nonumber
+\eta_t\beta\Vert\X_t-\X^*\Vert_*.
\end{align*}

In addition,
\begin{align*}
-\log\left(\sum_{i=1}^r\lambda_i(\Y_t)\right) & \le -\log\left(\sum_{i=1}^{r^*}\lambda_i(\Y_t)\right) = -\log\left(\sum_{i=1}^{r^*}\lambda_i\left(\exp\left(\log(\X_t)-\eta_t\nabla f(\X_t)\right)\right)\right)
\\ & \underset{(a)}{\le} -\sum_{i=1}^{r^*}\lambda_i(\X^*)\cdot\log\left(\frac{\lambda_i\left(\exp\left(\log(\X_t)-\eta_t\nabla f(\X_t)\right)\right)}{\lambda_i(\X^*)}\right)
\\ & = -\sum_{i=1}^{r^*}\lambda_i(\X^*)\cdot\left[\log\left(\lambda_i\left(\exp\left(\log(\X_t)-\eta_t\nabla f(\X_t)\right)\right)-\log(\lambda_i(\X^*)\right)\right]
\\ & = -\sum_{i=1}^{r^*}\lambda_i(\X^*)\lambda_i\left(\log(\X_t)-\eta_t\nabla f(\X_t)\right)+\sum_{i=1}^{r^*}\lambda_i(\X^*)\log(\lambda_i(\X^*))
\\ & = -\sum_{i=1}^{n}\lambda_i(\X^*)\lambda_i\left(\log(\X_t)-\eta_t\nabla f(\X_t)\right)+\sum_{i=1}^{r^*}\lambda_i(\X^*)\log(\lambda_i(\X^*))
\\ & \underset{(b)}{\le} -\langle\X^*,\log(\X_t)-\eta_t\nabla f(\X_t)\rangle +\sum_{i=1}^{r^*}\lambda_i(\X^*)\lambda_i(\log(\X^*))
\\ & \underset{(c)}{=} -\langle\X^*,\log(\X_t)-\eta_t\nabla f(\X_t)\rangle +\sum_{i=1}^{r^*}\lambda_i(\X^*\log(\X^*))
\\ & = -\trace(\X^*\log(\X_t))+\trace(\X^*\log(\X^*))+\eta_t\langle\X^*,\nabla f(\X_t)\rangle 
\\ & = \breg(\X^*,\X_t)+\eta_t\langle\X^*,\nabla f(\X^*)\rangle + \eta_t\langle\X^*,\nabla f(\X_t)-\nabla f(\X^*)\rangle
\\ & = \breg(\X^*,\X_t)+\eta_t\lambda_{n}(\nabla f(\X^*)) + \eta_t\langle\X^*,\nabla f(\X_t)-\nabla f(\X^*)\rangle
\\ & \le \breg(\X^*,\X_t)+\eta_t\lambda_{n}(\nabla f(\X^*)) +\eta_t\Vert\X^*\Vert_*\cdot\Vert\nabla f(\X_t)-\nabla f(\X^*)\Vert_2
\\ & \underset{(d)}{\le} \breg(\X^*,\X_t)+\eta_t\lambda_{n}(\nabla f(\X^*)) +\eta_t\Vert\nabla f(\X_t)-\nabla f(\X^*)\Vert_2
\\ & \le \breg(\X^*,\X_t)+\eta_t\lambda_{n}(\nabla f(\X^*))+\eta_t\beta\Vert\X_t-\X^*\Vert_*
\end{align*}
where (a) follows since $-\log(\cdot)$ is convex, (b) follows from the von Neumann inequality, (c) follows since $\X_t$ and $\log(\X_t)$ have the same eigen-vectors, and (d) follows since $\Vert\X^*\Vert_*=1$.

Then
\begin{align*}
\log\left(\frac{\lambda_{r^*+1}(\Y_t)}{a_t}\right) & = \log(\lambda_{r^*+1}(\Y_t))-\log\left(\sum_{i=1}^{r^*}\lambda_i(\Y_t)\right) 
\\ & \le \log\left(\lambda_{r^*+1}(\X_t)\right)-\eta_t \delta +2\eta_t\left(\beta+\frac{\sqrt{2r^*}G}{\lambda_{r^*}(\X^*)}\right)\Vert\X-\X^*\Vert_*+\breg(\X^*,\X_t)
\\ & \le \log\left(\lambda_{r^*+1}(\X_t)\right)-\eta_t\delta+\eta_t \gamma \sqrt{\breg(\X^*,\X_t)}+\breg(\X^*,\X_t),
\end{align*}
where the last inequality holds from \eqref{ineq:strongConvexityOfBregmanDistance} and $\gamma=\sqrt{2}\left(2\beta+\frac{2\sqrt{2{r^*}}}{\lambda_{r^*}(\X^*)}G\right)$.

Therefore,
\begin{align*}
\frac{\lambda_{r^*+1}(\Y_t)}{a_t} & \le \exp\left(\log\left(\lambda_{r^*+1}(\X_t)\right)-\eta_t\delta+\eta_t \gamma \sqrt{\breg(\X^*,\X_t)}+\breg(\X^*,\X_t)\right) 
\\ & = \lambda_{r^*+1}(\X_t)\exp\left(-\eta_t\delta+\eta_t \gamma \sqrt{\breg(\X^*,\X_t)}+\breg(\X^*,\X_t)\right).
\end{align*}

Using the definition of the update of $\X_{t+1}$,
\begin{align*}
\lambda_{r^*+1}(\X_{t+1}) & = \max\left\lbrace(1-\varepsilon_t)\frac{\lambda_{r^*+1}(\Y_t)}{a_t},\frac{\varepsilon_t}{n-r}\right\rbrace
\\ & \le \max\left\lbrace(1-\varepsilon_t)\lambda_{r^*+1}(\X_t)\exp\left(-\eta_t\delta+\eta_t \gamma \sqrt{\breg(\X^*,\X_t)}+\breg(\X^*,\X_t)\right),\frac{\varepsilon_t}{n-r}\right\rbrace.
\end{align*}

\end{proof}

\begin{theorem}\label{thm:smoothCaseGen}Fix an optimal solution $\X^*$ to Problem \eqref{eq:Model} for which Assumption \ref{ass:strictcomp} holds with parameter $\delta>0$ and denote $r^*=\rank(\X^*)$. Let $\{\X_t\}_{t\geq 1}$ be the sequence of iterates generated by Algorithm \ref{alg:LRMD} with deterministic updates and with a fixed step-size $\eta_t=\eta=1/\beta$, and with SVD rank parameter $n\not=r\geq{}r^*$. Suppose that for all $t\geq 0$: $\varepsilon_t=\frac{3\tilde{\varepsilon}_0}{2\max\{G^2,1\}}\frac{1}{(t+c+1)^3}$ for some $\tilde{\varepsilon}_0\leq R_0^2$,
where
\begin{align*}
R_0 &:=\min\left\lbrace\frac{1}{4}\left[2\beta+\left(1+\frac{2\sqrt{2r^*}}{\lambda_{r^*}(\X^*)}\right)G\right]^{-1}\delta,~\frac{G}{\beta}\right\rbrace,
\end{align*}
$c\ge\frac{6\max\{\beta,1\}}{\delta}$, and $G\geq\sup_{\X\in\mathcal{S}_n}\Vert \nabla f(\X)\Vert_2$. Finally, assume the initialization matrix $\X_0$ satisfies $\lambda_{r^*+1}(\X_0) \leq r\varepsilon_0/(n(n-r))$ and the parameters $\X_0,\X^*,\tilde{\varepsilon}_0,R_0$ satisfy the initialization condition in Eq. \eqref{eq:init:highRank} (substituting $\X=\X_0$, $\varepsilon=\tilde{\varepsilon}_0$, $R=R_0$).
Then, for any $T\geq 1$ it holds that
\begin{align*}
\min_{1\le t\le {T}}f(\X_t) -f(\X^*) & \le \left(\left(1+\frac{1}{\max\{G^2,1\}}\right)\beta R_0^2+4\sqrt{3}R_0\right)\frac{1}{T}.
\end{align*}
\end{theorem}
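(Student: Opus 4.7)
The plan is to mirror the proof of Theorem \ref{thm:smoothCaseDeminish}, but to replace its single inductive invariant (a bound on $\breg(\X^*,\X_t)$) by a joint invariant that tracks both quantities needed to apply Lemma \ref{lemma:StochasticRadius} at every step: the Bregman distance $\breg(\X^*,\X_t)\le 2R_0^2$, and the eigenvalue $\lambda_{r^*+1}(\X_t)\le\varepsilon_{t-1}/(n-r)$. When $r=r^*$ the second invariant was automatic from the construction of $\X_t$, but for $r>r^*$ it must be propagated by hand using the new recursion \eqref{ineq:boundOnLambda_rstarplus1}.

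For the base case, I would apply Lemma \ref{lem:warmstart} with the initialization hypothesis \eqref{eq:init:highRank} to obtain $\breg(\X^*,\X_1)\le R_0^2\le 2R_0^2$, and use the assumption $\lambda_{r^*+1}(\X_0)\le r\varepsilon_0/(n(n-r))$ together with $\X_1=(1-\varepsilon_0)\X_0+(\varepsilon_0/n)\I$ to conclude $\lambda_{r^*+1}(\X_1)\le(1-\varepsilon_0)\lambda_{r^*+1}(\X_0)+\varepsilon_0/n\le\varepsilon_0/(n-r)$. For the inductive step, assuming both invariants at step $t$, Lemma \ref{lemma:StochasticRadius} (with $\xi=0$, $\varepsilon=\varepsilon_t$, $\varepsilon_{-}=\varepsilon_{t-1}$) supplies $\max\{\breg(\X^*,\X_{t+1})-\breg(\X^*,\Z_{t+1}),\,\breg(\Z_{t+1},\X_{t+1})\}\le 2\varepsilon_t$, and Lemma \ref{lemma:bregmanDecrease} then gives the telescoping bound $\breg(\X^*,\X_{t+1})\le\breg(\X^*,\X_t)+2\varepsilon_t$. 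The choice of $\varepsilon_t$ makes $\sum_{s\ge 1}\varepsilon_s$ bounded by a constant multiple of $R_0^2$ (using $\sum_{s\ge 1}(s+c+1)^{-3}<\tfrac{1}{2}$), which closes the Bregman part of the induction.

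The main obstacle is propagating the eigenvalue bound. Plugging the inductive hypotheses into \eqref{ineq:boundOnLambda_rstarplus1}, one must verify
\[
(1-\varepsilon_t)\,\frac{\varepsilon_{t-1}}{n-r}\,\exp\!\bigl(-\eta\delta+\eta c\sqrt{2}\,R_0+2R_0^2\bigr)\le\frac{\varepsilon_t}{n-r},
\]
where $c$ is the constant appearing in \eqref{ineq:boundOnLambda_rstarplus1}. The first piece of the definition of $R_0$ is tuned exactly so that $\eta c\sqrt{2}\,R_0\le\eta\delta/2$; the second piece, $R_0\le G/\beta$, together with the earlier bound $R_0\le\delta/(8\beta)$, keeps $2R_0^2$ a small additive correction that is absorbed by a further fraction of $\eta\delta$. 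The remaining slack $\exp(-\Theta(\eta\delta))$ must dominate the ratio $\varepsilon_{t-1}/\varepsilon_t=\bigl((t+c+2)/(t+c+1)\bigr)^3\le 1+\Theta(1/(t+c+1))$; the hypothesis $c\ge 6\max\{\beta,1\}/\delta$ is precisely what forces this, exactly as the analogous inequality $-\tfrac{1}{\eta}\log(\varepsilon_{t-1}/\varepsilon_t)\ge-\delta/2$ was used in \eqref{eq:thm:smoothConv:1} of the proof of Theorem \ref{thm:smoothCaseDeminish}.

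Once both invariants hold for all $t\ge 1$, the conclusion follows the same route as in Theorem \ref{thm:smoothCaseDeminish}: Theorem \ref{thm:converganceSmooth} reduces the regret to the two error sums $\sum_{t=1}^{T-1}(\breg(\X^*,\X_{t+1})-\breg(\X^*,\Z_{t+1}))$ and $\sum_{t=1}^{T-1}\Vert\nabla f(\X_{t+1})\Vert_2\sqrt{\breg(\Z_{t+1},\X_{t+1})}$, which are bounded by $\sum_{t\ge 1}2\varepsilon_t$ and $G\sum_{t\ge 1}2\sqrt{2\varepsilon_t}$ respectively. The cubic decay of $\varepsilon_t$ (now with numerator $3\tilde\varepsilon_0/(2\max\{G^2,1\})$ instead of $\tilde\varepsilon_0/(2\max\{G^2,1\})$) yields the stated constant $(1+1/\max\{G^2,1\})\beta R_0^2+4\sqrt{3}R_0$, where the additional $\sqrt{3}$ relative to Theorem \ref{thm:smoothCaseDeminish} comes precisely from the enlarged numerator of $\varepsilon_t$ needed to give the eigenvalue recursion enough slack.
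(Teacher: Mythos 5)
Your proposal follows the paper's proof essentially verbatim in structure: the same joint induction on $\breg(\X^*,\X_t)\le 2R_0^2$ and on the eigenvalue bound $\lambda_{r^*+1}(\X_t)\le\varepsilon_{t-1}/(n-r)$, the same base case (Lemma \ref{lem:warmstart} plus $\lambda_{r^*+1}(\X_1)\le(1-\varepsilon_0)\lambda_{r^*+1}(\X_0)+\varepsilon_0/n\le\varepsilon_0/(n-r)$), the same use of Lemmas \ref{lemma:StochasticRadius} and \ref{lemma:bregmanDecrease} with the recursion \eqref{ineq:boundOnLambda_rstarplus1}, and the same final assembly via Theorem \ref{thm:converganceSmooth}.

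The one place where your write-up is too loose is the budget accounting in the eigenvalue recursion, which is precisely the new difficulty of this theorem. Writing $\tilde{c}:=\sqrt{2}\left(2\beta+\frac{2\sqrt{2r^*}}{\lambda_{r^*}(\X^*)}G\right)$ for the constant in \eqref{ineq:boundOnLambda_rstarplus1}, you must verify $\eta\tilde{c}\sqrt{2}R_0+2R_0^2+\log(\varepsilon_{t-1}/\varepsilon_t)\le\eta\delta$. The hypothesis on the schedule offset only gives $\log(\varepsilon_{t-1}/\varepsilon_t)\le 3\log(1+1/c)\le\delta/(2\max\{\beta,1\})\le\eta\delta/2$, so once you spend $\eta\delta/2$ on $\eta\tilde{c}\sqrt{2}R_0$ there is, under your split, nothing left for $2R_0^2$; and the absorption you invoke ($R_0\le G/\beta$ and $R_0\le\delta/(8\beta)$ give $2R_0^2\le\eta^2G\delta/4$) is a small fraction of $\eta\delta$ only when $\eta G=O(1)$, which is not assumed. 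The accounting does close with the stated constants, but only via the sharper combination the paper uses: bound $2R_0^2\le 2\eta G R_0$ (from $R_0\le G/\beta$) and fold it into the other term, so that $\eta\tilde{c}\sqrt{2}R_0+2R_0^2\le 2\eta R_0\left[2\beta+\left(1+\frac{2\sqrt{2r^*}}{\lambda_{r^*}(\X^*)}\right)G\right]\le\eta\delta/2$; the extra $+G$ inside the bracket of $R_0$'s first branch (absent from $\tilde{c}$) is exactly what pays for the additive Bregman term, leaving the other $\eta\delta/2$ for the ratio term. (Equivalently, the paper first replaces $\breg(\X^*,\X_{t-1})$ by $\sqrt{2}\eta G\sqrt{\breg(\X^*,\X_{t-1})}$ using the second branch of the min before comparing with $\delta$.) Finally, your explanation of the enlarged numerator of $\varepsilon_t$ is off: the numerator cancels in the ratio $\varepsilon_{t-1}/\varepsilon_t$ and plays no role in the eigenvalue recursion; it is merely a constant choice that still keeps $2\sum_{t\ge1}\varepsilon_t\le\tilde{\varepsilon}_0$ and yields the $4\sqrt{3}R_0$ term.
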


\begin{proof}

Similarly to \eqref{eq:thm:smoothConv:1}, it can be seen that
\begin{align}\label{eq:thm:smoothConv:4}
& \min\left\lbrace\frac{1}{\sqrt{2}}\left[2\beta+\left(1+\frac{2\sqrt{2r^*}}{\lambda_{r^*}(\X^*)}\right)G\right]^{-1}\left(\delta-\frac{1}{\eta}\log\left(\frac{\varepsilon_{t-1}}{\varepsilon_{t}}\right)\right),\sqrt{2}\eta G\right\rbrace \nonumber
\\ & \ge \min\left\lbrace\frac{1}{\sqrt{2}}\left[2\beta+\left(1+\frac{2\sqrt{2r^*}}{\lambda_{r^*}(\X^*)}\right)G\right]^{-1}\frac{\delta}{2},\sqrt{2}\eta G\right\rbrace. 
\end{align}

Thus, in order to invoke Lemma \ref{lemma:StochasticRadius} for all $t\geq 1$ it suffices to prove that for all $t\geq 1$: i. $\lambda_{r^*+1}(\X_t) \leq \frac{\varepsilon_{t-1}}{n-r}$ and ii. $\sqrt{B(\X^*,\X_t)} \leq \textrm{RHS of } \eqref{eq:thm:smoothConv:4}$.

We now prove by induction that indeed for all $t\geq 1$, $\sqrt{B(\X^*,\X_t)} \leq \textrm{RHS of } \eqref{eq:thm:smoothConv:4}$ and $\lambda_{r^*+1}(\X_{t})\le\frac{\varepsilon_{t-1}}{n-r}$. The base case holds due to our initialization, and by noticing that from the definition of $\X_1$, the condition $\lambda_{r^*+1}(\X_0) \leq r\varepsilon_0/(n(n-r))$ implies that $\lambda_{r^*+1}(\X_1)\le \frac{r\varepsilon_0}{n(n-r)}+\frac{\varepsilon_0}{n}=\frac{\varepsilon_0}{n-r}$. Now, if the assumptions holds for all $i\in\lbrace 1,\ldots,t-1\rbrace$, then from \eqref{ineq:boundOnLambda_rstarplus1} with $\gamma=\sqrt{2}\left(2\beta+\frac{2\sqrt{2{r^*}}}{\lambda_{r^*}(\X^*)}G\right)$, it holds that
\begin{align*} 
\lambda_{r^*+1}(\X_{t}) 
& \le \max\left\lbrace(1-\varepsilon_{t-1})\lambda_{r^*+1}(\X_{t-1})\exp\left(-\eta\delta+\eta \gamma \sqrt{\breg(\X^*,\X_{t-1})}+\breg(\X^*,\X_{t-1})\right),\frac{\varepsilon_{t-1}}{n-r}\right\rbrace
\\ & \le \max\left\lbrace\frac{\varepsilon_{t-2}}{n-r}\exp\left(-\eta\delta+\eta \gamma \sqrt{\breg(\X^*,\X_{t-1})}+\breg(\X^*,\X_{t-1})\right),\frac{\varepsilon_{t-1}}{n-r}\right\rbrace
\\ & \underset{(a)}{\le} \max\left\lbrace\frac{\varepsilon_{t-2}}{n-r}\exp\left(-\eta\delta+\eta \gamma \sqrt{\breg(\X^*,\X_{t-1})}+\sqrt{2}\eta G\sqrt{\breg(\X^*,\X_{t-1})}\right),\frac{\varepsilon_{t-1}}{n-r}\right\rbrace
\\ & \underset{(b)}{\le} \max\left\lbrace\frac{\varepsilon_{t-2}}{n-r}\exp\left(-\log\left(\frac{\varepsilon_{t-2}}{\varepsilon_{t-1}}\right)\right),\frac{\varepsilon_{t-1}}{n-r}\right\rbrace
 = \frac{\varepsilon_{t-1}}{n-r},
\end{align*}
where (a) holds due to the induction hypothesis which implies that $\sqrt{\breg(\X^*,\X_{t-1})}\le\sqrt{2}\eta G$, and (b) also holds due to the induction hypothesis  which implies that $$\sqrt{\breg(\X^*,\X_{t-1})}\le\frac{1}{\sqrt{2}}\left[2\beta+\left(1+\frac{2\sqrt{2r^*}}{\lambda_{r^*}(\X^*)}\right)G\right]^{-1}\left(\delta-\frac{1}{\eta}\log\left(\frac{\varepsilon_{t-2}}{\varepsilon_{t-1}}\right)\right).$$  

Invoking Lemma \ref{lemma:StochasticRadius}, it holds that $\breg(\X^*,\X_{i}) \le \breg(\X^*,\Z_{i})+2\varepsilon_{i-1}$ for all $i \in \lbrace1,\ldots,t\rbrace$. 
Invoking Lemma \ref{lemma:bregmanDecrease} and using recursion, we obtain
\begin{align*} 
\breg(\X^*,\X_{t}) & \le \breg(\X^*,\X_{t-1})+2\varepsilon_{t-1} \le \ldots \le \breg(\X^*,\X_{1})+2\sum_{i=1}^{t-1}\varepsilon_i \nonumber
\\ & \le \breg(\X^*,\X_{1})+\frac{3\tilde{\varepsilon}_0}{\max\{G^2,1\}}\sum_{i=1}^{T}\frac{1}{(t+c+1)^3} 
\\ & \le \breg(\X^*,\X_{1})+3\tilde{\varepsilon}_0\sum_{i=1}^{\infty}\frac{1}{(t+1)^3} \le \breg(\X^*,\X_{1})+\tilde{\varepsilon}_0\le 2R_0^2,
\end{align*} 
where the last inequality holds from our initialization assumption and Lemma \ref{lem:warmstart}.

Thus, we obtain that for all $t\ge1$:
\begin{align*}
& \sqrt{\breg(\X^*,\X_t)}\le \sqrt{2}R_0 = \min\left\lbrace\frac{1}{\sqrt{2}}\left[2\beta+\left(1+\frac{2\sqrt{2r^*}}{\lambda_{r^*}(\X^*)}\right)G\right]^{-1}\frac{\delta}{2},\sqrt{2}\eta G\right\rbrace,
\end{align*}
and the induction holds.

Invoking Lemma \ref{lemma:StochasticRadius} for all $t\ge1$ guarantees that for all $t\ge1$,
$\breg(\X^*,\X_{t+1}) - \breg(\X^*,\Z_{t+1}) \le 2\varepsilon_t$ and $\breg(\Z_{t+1},\X_{t+1})\le 2\varepsilon_t$.

In addition, we note that the following inequalities hold for any $T\ge 1$:
\[ \sum_{t=1}^{T-1}\sqrt{\varepsilon_t}=\frac{\sqrt{3\tilde{\varepsilon}_0}}{\sqrt{2}\max\{G,1\}}\sum_{t=1}^{T-1} \frac{1}{(t+c+1)^{1.5}} \le \frac{\sqrt{3\tilde{\varepsilon}_0}}{\sqrt{2}\max\{G,1\}}\sum_{t=1}^{\infty} \frac{1}{(t+1)^{1.5}} < \frac{\sqrt{6}R_0}{\max\{G,1\}}, \]
\[ \sum_{t=1}^{T-1}\varepsilon_t=\frac{3\tilde{\varepsilon}_0}{2\max\{G^2,1\}}\sum_{t=1}^{T-1} \frac{1}{(t+c+1)^3}\le \frac{3\tilde{\varepsilon}_0}{2\max\{G^2,1\}}\sum_{t=1}^{\infty} \frac{1}{(t+1)^3} < \frac{R_0^2}{2\max\{G^2,1\}}.\]

Plugging in the last two bounds and our choice $\eta=\frac{1}{\beta}$ into Theorem \ref{thm:converganceSmooth}, we obtain that for all $T\ge1$,
\begin{align*}
\min_{1\le t\le {T}}f(\X_t) -f(\X^*) & \le \frac{\beta\breg(\X^*,\X_{1})}{T}+\frac{2\beta}{T}\sum_{t=1}^{T-1}\varepsilon_t+\frac{2\sqrt{2}G}{T}\sum_{t=1}^{T-1}\sqrt{\varepsilon_t}
\\ & \le \frac{\beta R_0^2}{T}+\frac{\beta R_0^2}{\max\{G^2,1\}T}+\frac{4\sqrt{3}R_0}{T}.
\end{align*}

\end{proof}

\section{Provable Local Convergence with Stochastic Updates under Strict Complementarity} \label{section:stochasticCase}

In this section we turn to consider the stochastic setting in which $f(\cdot)$ is given by a first-order stochastic oracle that for any $\X\in\mathbb{S}^n$ it returns a random matrix $\widehat{\nabla}\in\mathbb{S}^n$ such that $\E[\widehat{\nabla}\vert\X]=\nabla f(\X)$, $\Vert\widehat{\nabla}\Vert\le G$ and $\var[\widehat{\nabla}\vert\X] = \E[\Vert{\widehat{\nabla}-\nabla{}f(\X)}\Vert^2|\X]\le\sigma^2$, for some $G>0$ and $\sigma>0$. 

Our main result for this section is the proof that when initialized with a ``warm-start" point and suitable choice of parameters and assuming strict complementarity holds, Algorithm \ref{alg:LRMD} with mini-batches of stochastic gradients converges in expectation  to an optimal solution with rate $O(1/\sqrt{t})$. 

Using Algorithm \ref{alg:LRMD} with mini-batches is crucial to the application of Lemma \ref{lemma:StochasticRadius}, in order to bound the gradient error parameter $\xi$ (at least with some positive probability). Thus, throughout this section we consider Algorithm \ref{alg:LRMD} with stochastic updates and with a fixed mini-batch size $L$, where the stochastic gradient on iteration $t$ is given by $\widehat{\nabla}_t = \frac{1}{L}\sum_{i=1}^L\widehat{\nabla}_t^{(i)}$, such that $\widehat{\nabla}_t^{(1)},\dots,\widehat{\nabla}_t^{(L)}$ denote $L$ i.i.d. queries to the stochastic oracle with the current iterate $\X_t$. Naturally, this is important since the variance of the mini-batched gradient satisfies $\var[\widehat{\nabla}_t|\X_t] = \sigma^2/L$.

In the following lemma we bound the decrease in the Bregman distance between an iterate and the expectation of its standard stochastic MEG update step computed upon it.

\begin{lemma} \label{lemma:StochasticBregmanDecrease}
Let $\{\X_t\}_{t\ge1}$ be the sequence generated by Algorithm \ref{alg:LRMD} with stochastic updates, and let $\X^*$ be an optimal solution. For all $t\geq 1$, let $\Z_{t+1}$ be the standard stochastic MEG update to $\X_t$, as defined in \eqref{eq:MDvonNeumannStochasticUpdate}. Then for any $t\ge1$,  
it holds that
\[ \E[\breg(\X^*,\Z_{t+1})\vert\X_t] \le \breg(\X^*,\X_{t})+\frac{1}{2}\eta_t^2G^2. \]
\end{lemma}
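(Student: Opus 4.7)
The plan is to essentially replay the single-iteration portion of the proof of Theorem~\ref{thm:StochasticConvergence}, conditioning on $\X_t$ instead of summing and taking unconditional expectation. Since $\Z_{t+1}$ is the exact stochastic MEG update computed from $\X_t$ with stochastic gradient $\widehat{\nabla}_t$, it is the minimizer of the mirror-descent subproblem in \eqref{eq:MDGeneralUpdate} with $\widehat{\nabla}_t$ in place of $\nabla f(\X_t)$. Its first-order optimality condition, evaluated at $\X=\X^*$, together with the three-point identity \eqref{lemma:threePointLemma}, gives
\begin{align*}
\eta_t\langle\widehat{\nabla}_t,\Z_{t+1}-\X^*\rangle \;\le\; \breg(\X^*,\X_t) - \breg(\X^*,\Z_{t+1}) - \breg(\Z_{t+1},\X_t),
\end{align*}
exactly as in inequality \eqref{ineq:StochasticOptZ(t+1)} of the earlier proof.

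Next, I would add $\eta_t\langle\widehat{\nabla}_t,\X_t-\Z_{t+1}\rangle$ to both sides so that the left-hand side becomes $\eta_t\langle\widehat{\nabla}_t,\X_t-\X^*\rangle$. The two ``cross'' terms on the right-hand side, $-\breg(\Z_{t+1},\X_t) + \eta_t\langle\widehat{\nabla}_t,\X_t-\Z_{t+1}\rangle$, can be bounded using H\"older's inequality (with the dual pair spectral / nuclear norm) and the $1$-strong convexity of $\breg(\cdot,\cdot)$ in \eqref{ineq:strongConvexityOfBregmanDistance} by $\tfrac{1}{2}\eta_t^2\Vert\widehat{\nabla}_t\Vert_2^2 \le \tfrac{1}{2}\eta_t^2 G^2$, using the bound $\Vert\widehat{\nabla}_t\Vert_2 \leq G$ assumed on the stochastic oracle (this is the same calculation that appears right after \eqref{ineq:StochasticOptZ(t+1)_2}). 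Rearranging produces the deterministic inequality
\begin{align*}
\breg(\X^*,\Z_{t+1}) \;\le\; \breg(\X^*,\X_t) + \tfrac{1}{2}\eta_t^2 G^2 - \eta_t\langle\widehat{\nabla}_t,\X_t-\X^*\rangle.
\end{align*}

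Finally, I would take conditional expectation $\E[\cdot\mid\X_t]$ on both sides. Unbiasedness of the stochastic oracle gives $\E[\widehat{\nabla}_t\mid\X_t]=\nabla f(\X_t)$, so the last term becomes $-\eta_t\langle\nabla f(\X_t),\X_t-\X^*\rangle$, which is nonpositive by the gradient inequality for the convex function $f$ at $\X^*$, since $\langle\nabla f(\X_t),\X_t-\X^*\rangle \ge f(\X_t)-f(\X^*)\ge 0$. Dropping this nonpositive term yields exactly $\E[\breg(\X^*,\Z_{t+1})\mid\X_t] \le \breg(\X^*,\X_t) + \tfrac{1}{2}\eta_t^2 G^2$.

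I do not expect any genuinely hard step here; the lemma is a ``per-iteration'' version of material already developed. The only point that requires a little care is ensuring that the strong-convexity / H\"older step is performed with respect to the correct norm pair (nuclear norm for $\Vert\Z_{t+1}-\X_t\Vert_*$ and spectral norm for $\Vert\widehat{\nabla}_t\Vert_2$), which is why the $G$ in the bound is the spectral-norm bound rather than a Frobenius- or nuclear-norm bound.
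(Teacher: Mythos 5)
Your proposal is correct and follows essentially the same route as the paper's proof: optimality of $\Z_{t+1}$ plus the three-point identity, the H\"older/strong-convexity step yielding the $\tfrac{1}{2}\eta_t^2\Vert\widehat{\nabla}_t\Vert_2^2$ term, and then conditional expectation with unbiasedness, the gradient inequality, and optimality of $\X^*$ to drop the remaining term. The only (immaterial) difference is bookkeeping order — the paper inserts the optimality of $\X^*$ before taking expectation, while you do it after.
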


\begin{proof}

By the definition of $\Z_{t+1}$ in \eqref{eq:MDGeneralUpdate}, where the exact gradient is replaced with a stochastic gradient $\widehat{\nabla}_t$,
\[ \Z_{t+1}=\argmin_{\Z\in\mathcal{S}_n}\{\langle\eta_t\widehat{\nabla}_t-\nabla\omega(\X_t),\Z\rangle+\omega(\Z)\}. \]

Therefore, by the optimality condition for $\Z_{t+1}$, it holds that
\begin{align*}
0 & \ge \langle -\eta_t\widehat{\nabla}_t+\nabla\omega(\X_t)-\nabla\omega(\Z_{t+1}),\X^*-\Z_{t+1}\rangle
\\ & = \langle \nabla\omega(\X_t)-\nabla\omega(\Z_{t+1}),\X^*-\Z_{t+1}\rangle + \langle \eta_t\widehat{\nabla}_t,\Z_{t+1}-\X_t\rangle + \langle \eta_t\widehat{\nabla}_t,\X_t-\X^*\rangle
\\ & \underset{(a)}{=} \breg(\X^*,\Z_{t+1})+\breg(\Z_{t+1},\X_{t})-\breg(\X^*,\X_{t}) + \langle \eta_t\widehat{\nabla}_t,\Z_{t+1}-\X_t\rangle + \langle \eta_t\widehat{\nabla}_t,\X_t-\X^*\rangle 
\\ & \underset{(b)}{\ge} \breg(\X^*,\Z_{t+1})-\breg(\X^*,\X_{t}) +\langle \eta_t\widehat{\nabla}_t,\X_t-\X^*\rangle +\frac{1}{2}\Vert\Z_{t+1}-\X_t\Vert^2_* - \eta_t\Vert\widehat{\nabla}_t\Vert_2\Vert\Z_{t+1}-\X_t\Vert_*
\\ & \ge \breg(\X^*,\Z_{t+1})-\breg(\X^*,\X_{t}) +\langle \eta_t\widehat{\nabla}_t,\X_t-\X^*\rangle + \min_{a\in\reals}\left\{\frac{1}{2}a^2-a\eta_t\Vert\widehat{\nabla}_t\Vert_{2}\right\}
\\ & = \breg(\X^*,\Z_{t+1})-\breg(\X^*,\X_{t}) +\langle \eta_t\widehat{\nabla}_t,\X_t-\X^*\rangle -\frac{1}{2}\eta_t^2\Vert\widehat{\nabla}_t\Vert_{2}^2
\\ & \underset{(c)}{\ge} \breg(\X^*,\Z_{t+1})-\breg(\X^*,\X_{t}) + \eta_t\left(f(\X^*)-f(\X_t) + \langle\widehat{\nabla}_t,\X_t-\X^*\rangle\right)-\frac{1}{2}\eta_t^2\Vert\widehat{\nabla}_t\Vert_{2}^2.
\end{align*}
where (a) holds from the three point lemma in \eqref{lemma:threePointLemma}, (b) follows from $1$-strong convexity of the Bregman distance as in \eqref{ineq:strongConvexityOfBregmanDistance} and H\"{o}lder's inequality, and (c) follows from the optimality of $\X^*$.

Taking expectation with respect to $\X_t$, we get
\begin{align*}
\E[\breg(\X^*,\Z_{t+1})\vert\X_t] \le \breg(\X^*,\X_{t})-\eta_t\left(f(\X^*)-f(\X_t) + \langle\nabla f(\X_t),\X_t-\X^*\rangle\right)+\frac{1}{2}\eta_t^2\E[\Vert\widehat{\nabla}_t\Vert_{2}^2\vert\X_t]. 
\end{align*}

Using the gradient inequality, this implies that 
\[ \E[\breg(\X^*,\Z_{t+1})\vert\X_t] \le \breg(\X^*,\X_{t})+\frac{1}{2}\eta_t^2G^2. \]

\end{proof}

In our analysis of the deterministic case we used Lemma \ref{lemma:bregmanDecrease} to show that the Bregman distance between the iterates and the optimal solution does not increase over time, up to some small easily controlled error resulting from the inexact low-rank computations. This property does not hold anymore  when using stochastic updates. Therefore, towards obtaining a convergence rate for the stochastic setting, we  introduce a martingale argument to prove that with high probability the iterates of Algorithm \ref{alg:LRMD} remain inside the ball around the optimal solution inside-which the convergence of the algorithm could be guaranteed.

\begin{lemma} \label{lemma:concentrationBound}
Fix $p\in(0,1)$. Let $\lbrace \X_t\rbrace_{t\in [T]}$ be the sequence generated by $T$ iterations of Algorithm \ref{alg:LRMD} with stochastic updates and with fixed mini-batch size $L>0$, and suppose that $\lbrace\eta_t\rbrace_{t\in [T-1]}$ is a non-increasing sequence of non-negative scalars. For any $t\ge1$ let $\Z_{t+1}$ be the standard stochastic MEG update to $\X_t$ as defined in \eqref{eq:MDvonNeumannStochasticUpdate} and $\Z_1=\X_1$. Then for any $\X^*\in\mathcal{X}^*$, if the inequality
\begin{align} \label{ineq:conditionOfConcentrationBoundLemma}
\sqrt{\sum_{i=1}^{T-1}\eta_{i}^2\sigma^2/L}\ge \frac{1}{3}\eta_1G\sqrt{2\log\left(\frac{T}{p}\right)}
\end{align}
holds, then with probability at least $1-p$, it holds  for all $t\in[T]$ that
\begin{align*}
\breg(\X^*,\X_t) & \le \breg(\X^*,\X_1)+\frac{1}{2}G^2\sum_{i=1}^{t-1}\eta_i^2+\sum_{i=1}^{t}(\breg(\X^*,\X_{i})-\breg(\X^*,\Z_{i})) \\
&~+\sqrt{16\sigma^2/L\sum_{i=1}^{T-1}\eta_{i}^2}\sqrt{\log\left(\frac{T}{p}\right)}.
\end{align*}
\end{lemma}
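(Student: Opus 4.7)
The plan is to reduce the statement to a single martingale tail inequality layered on top of a pathwise recursion. First I would revisit the first half of the proof of Theorem \ref{thm:StochasticConvergence}: starting from $\eta_i \langle \widehat{\nabla}_i,\X_i-\X^*\rangle \le \breg(\X^*,\X_i) - \breg(\X^*,\Z_{i+1}) + \tfrac{1}{2}\eta_i^2\|\widehat{\nabla}_i\|_2^2$, I would add $\breg(\X^*,\X_{i+1})-\breg(\X^*,\Z_{i+1})$ to both sides, split $\widehat{\nabla}_i = \nabla f(\X_i) - (\nabla f(\X_i)-\widehat{\nabla}_i)$, drop the nonnegative convexity term $\eta_i\langle \nabla f(\X_i),\X_i-\X^*\rangle\ge 0$, and bound $\|\widehat{\nabla}_i\|_2 \le G$. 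This produces the one-step inequality $\breg(\X^*,\X_{i+1}) \le \breg(\X^*,\X_i) + \Delta_i + \tfrac{1}{2}\eta_i^2 G^2 + (\breg(\X^*,\X_{i+1})-\breg(\X^*,\Z_{i+1}))$ with $\Delta_i := \eta_i\langle \nabla f(\X_i)-\widehat{\nabla}_i,\X_i-\X^*\rangle$. Telescoping from $i=1$ to $t-1$ and using $\Z_1=\X_1$ gives exactly the bound stated in the lemma, except with the random partial sum $M_t := \sum_{i=1}^{t-1}\Delta_i$ in place of the final concentration term.

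Since $\widehat{\nabla}_i$ is an unbiased estimator given $\X_i$, $\{M_t\}$ is a martingale with respect to the natural filtration. H\"older's inequality $|\langle A,B\rangle|\le \|A\|_2\|B\|_*$, the triangle bound $\|\X_i-\X^*\|_*\le 2$, and the mini-batch variance reduction $\E[\|\widehat{\nabla}_i-\nabla f(\X_i)\|_2^2\mid \X_i]\le \sigma^2/L$ together yield the conditional second moment estimate $\E[\Delta_i^2\mid\X_i]\le 4\eta_i^2\sigma^2/L$ and the almost-sure jump bound $|\Delta_i|\le 4\eta_i G\le 4\eta_1 G$, where the last step exploits monotonicity of $\{\eta_i\}$. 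I would then apply Freedman's inequality to both $\{M_t\}$ and $\{-M_t\}$ to obtain $\Pr(\max_{t\in[T]}|M_t|\ge \lambda)\le 2\exp(-\lambda^2/(2(V+b\lambda/3)))$ with $V=4\sigma^2/L\sum_{i=1}^{T-1}\eta_i^2$ and $b=4\eta_1 G$. Choosing $\lambda := \sqrt{16\sigma^2/L\sum_{i=1}^{T-1}\eta_i^2}\sqrt{\log(T/p)}$, hypothesis \eqref{ineq:conditionOfConcentrationBoundLemma} is precisely the inequality needed to force $b\lambda/3\le V$, which collapses the Freedman denominator to $2V$ and leaves a pure Hoeffding-type tail whose exponent is at most $-\log(T/p)$. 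Combining the resulting high-probability bound on $\max_{t\in[T]}|M_t|$ with the pathwise recursion from the previous paragraph yields the claim.

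The main obstacle is the bookkeeping of constants so that \eqref{ineq:conditionOfConcentrationBoundLemma} certifies the variance regime of Freedman's inequality at the prescribed value of $\lambda$. Concretely, I need the almost-sure bound on the jumps to be uniform in $i$ (hence monotonicity of $\{\eta_i\}$, used to pass from $\eta_i$ to $\eta_1$), and the variance bound must carry the factor of $4$ coming from $\|\X_i-\X^*\|_*\le 2$; the specific constant $\tfrac{\sqrt{2}}{3}$ appearing in \eqref{ineq:conditionOfConcentrationBoundLemma} is then engineered to sit exactly at the threshold where the predictable quadratic variation dominates the maximum-jump contribution in Freedman's bound. Once this algebra is verified, nothing further is needed beyond combining the two ingredients.
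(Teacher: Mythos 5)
Your proposal is correct, but it takes a genuinely different route from the paper's proof. The paper builds a supermartingale directly on the Bregman distance to the exact update, $W_t=\breg(\X^*,\Z_t)-\frac{1}{2}G^2\sum_{i<t}\eta_i^2-\sum_{i<t}(\breg(\X^*,\X_i)-\breg(\X^*,\Z_i))$, using Lemma \ref{lemma:StochasticBregmanDecrease} for the drift; the technical heart is then controlling the fluctuations of $\breg(\X^*,\Z_t)$ itself, which forces bounds on the log-normalizer $\log b_{t-1}$ via the Golden--Thompson and von Neumann trace inequalities, both for the bounded-differences term and for the conditional variance. You instead never touch the fluctuations of $\log b_t$: you re-derive the one-step inequality from the optimality/three-point argument of Theorem \ref{thm:StochasticConvergence}, peel off the convexity term $\eta_i\langle\nabla f(\X_i),\X_i-\X^*\rangle\ge 0$, telescope (using $\Z_1=\X_1$ to start the error sum at $i=1$), and apply Freedman's inequality to the explicit linear-noise martingale $M_t=\sum_{i<t}\eta_i\langle\nabla f(\X_i)-\widehat{\nabla}_i,\X_i-\X^*\rangle$, whose jump bound $4\eta_1G$ and conditional variance $4\eta_i^2\sigma^2/L$ follow from H\"older, $\Vert\X_i-\X^*\Vert_*\le 2$, and the mini-batch variance alone. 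This is more elementary (no matrix trace inequalities), and since Freedman already controls $\max_{t\le T}M_t$, the union bound over $t$ that the paper needs becomes optional; also note only the one-sided tail is needed, so the two-sided factor $2$ you introduce is unnecessary. Remarkably, both routes land on the identical constants ($b=4\eta_1G$, per-step variance $4\eta_i^2\sigma^2/L$), hence the identical final expression. One caveat: your claim that \eqref{ineq:conditionOfConcentrationBoundLemma} sits exactly at the threshold $b\lambda/3\le V$ does not check out at face value --- with $V=4\sigma^2/L\sum_{i=1}^{T-1}\eta_i^2$, $b=4\eta_1G$ and $\lambda=2\sqrt{V\log(T/p)}$, one needs $\sqrt{\sum_{i=1}^{T-1}\eta_i^2\sigma^2/L}\ge\frac{4}{3}\eta_1G\sqrt{\log(T/p)}$ rather than the stated $\frac{1}{3}\eta_1G\sqrt{2\log(T/p)}$; however the paper's own proof asserts precisely the same ``equivalence'' with the same $2\sqrt{2}$ discrepancy at the corresponding step, so this is a shared constant-bookkeeping slip (absorbed downstream by the choice of $L$ and $T$) rather than a gap specific to your argument.
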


\begin{proof}

For all $t\in[T]$ define the random variable 
\[ W_t:=\breg(\X^*,\Z_t)-\frac{1}{2}G^2\sum_{i=1}^{t-1}\eta_i^2-\sum_{i=1}^{t-1}(\breg(\X^*,\X_{i})-\breg(\X^*,\Z_{i})). \] 

$W_1,\ldots,W_T$ form a sub-martingale sequence with respect to the sequence $\X_1,\ldots,\X_{T-1}$, since for all $t\in[T-1]$ it holds that
\begin{align*}
\E[W_{t+1}\vert \X_1,\ldots,\X_{t}] & = \E[\breg(\X^*,\Z_{t+1})\vert\X_{t}]-\frac{1}{2}G^2\sum_{i=1}^{t}\eta_i^2-\sum_{i=1}^{t}\E[\breg(\X^*,\X_{i})-\breg(\X^*,\Z_{i})\vert\X_i]
\\ & = \E[\breg(\X^*,\Z_{t+1})\vert\X_{t}]-\frac{1}{2}G^2\sum_{i=1}^{t}\eta_i^2-\sum_{i=1}^{t}(\breg(\X^*,\X_{i})-\breg(\X^*,\Z_{i}))
\\ & \underset{(a)}{\le} \breg(\X^*,\X_{t})+\frac{1}{2}\eta_t^2G^2-\frac{1}{2}G^2\sum_{i=1}^{t}\eta_i^2-\sum_{i=1}^{t}(\breg(\X^*,\X_{i})-\breg(\X^*,\Z_{i}))
\\ & = \breg(\X^*,\Z_{t})-\frac{1}{2}G^2\sum_{i=1}^{t-1}\eta_i^2-\sum_{i=1}^{t-1}(\breg(\X^*,\X_{i})-\breg(\X^*,\Z_{i})) = W_t,
\end{align*}
where (a) follows from Lemma \ref{lemma:StochasticBregmanDecrease}.

We will show that the sub-martingale has bounded-differences to upper-bound its variance.
It holds for all $2\le t\le T$ that
\begin{align} \label{eq:boundedDifferences}
W_t-\E[W_{t}\vert W_1,\ldots,W_{t-1}] & = \breg(\X^*,\Z_{t})-\E[\breg(\X^*,\Z_{t})\vert\X_{t-1}].
\end{align}

First, note that for any $i\in[n]$ it holds that $$\lambda_i\left(\log\left(\frac{\Y_{t-1}}{b_{t-1}}\right)\right)=\log\left(\lambda_i\left(\frac{\Y_{t-1}}{b_{t-1}}\right)\right)=\log\left(\frac{1}{b_{t-1}}\lambda_i\left(\Y_{t-1}\right)\right)=\log\left(\lambda_i\left(\Y_{t-1}\right)\right)-\log(b_{t-1}).$$
Therefore, $\log\left(\frac{\Y_{t-1}}{b_{t-1}}\right)=\log\left(\Y_{t-1}\right)-\log(b_{t-1}\I)$. This implies that
\begin{align} \label{eq:traceYt/bt}
\trace\left(\X^*\log\left(\Z_t\right)\right) & =\trace\left(\X^*\log\left(\frac{\Y_{t-1}}{b_{t-1}}\right)\right) = \trace\left(\X^*\log\left(\Y_{t-1}\right)\right)-\trace\left(\X^*\log(b_{t-1}\I)\right) \nonumber
\\ & = \trace\left(\X^*\log\left(\Y_{t-1}\right)\right)-\log(b_{t-1}).
\end{align}

Now, by the definition of $\Z_t$ in \eqref{eq:MDvonNeumannStochasticUpdate}, we obtain
\begin{align} \label{ineq:boundedDifferences1}
& \breg(\X^*,\Z_{t})-\E[\breg(\X^*,\Z_{t})\vert\X_{t-1}] \nonumber
\\ & = \E[\trace(\X^*\log(\Z_t))\vert\X_{t-1}]-\trace(\X^*\log(\Z_t)) \nonumber
\\ & \underset{(a)}{=} \E[\trace(\X^*(\log(\X_{t-1})-\eta_{t-1}\widehat{\nabla}_{t-1}))\vert\X_{t-1}]-\trace(\X^*(\log(\X_{t-1})-\eta_{t-1}\widehat{\nabla}_{t-1})) \nonumber \\ & \ \ \ -\E[\log(b_{t-1})\vert\X_{t-1}]+\log(b_{t-1}) \nonumber
\\ & = \eta_{t-1}\trace(\X^*(\widehat{\nabla}_{t-1}-\nabla f(\X_{t-1})))-\E[\log(b_{t-1})\vert\X_{t-1}]+\log(b_{t-1}) \nonumber
\\ & \underset{(b)}{\le} \eta_{t-1}\Vert\X^*\Vert_*\Vert\widehat{\nabla}_{t-1}-\nabla f(\X_{t-1})\Vert_2-\E[\log(b_{t-1})\vert\X_{t-1}]+\log(b_{t-1}) \nonumber
\\ & = \eta_{t-1}\Vert\widehat{\nabla}_{t-1}-\nabla f(\X_{t-1})\Vert_2-\E[\log(b_{t-1})\vert\X_{t-1}]+\log(b_{t-1}),
\end{align}
where (a) follows from \eqref{eq:traceYt/bt}, and (b) follows from H\"{o}lder's inequality.

In addition,
\begin{align} \label{ineq:boundedDifferences2}
\log(b_{t-1}) & = \log\left(\trace(\exp(\log(\X_{t-1})-\eta_{t-1}\widehat{\nabla}_{t-1}))\right) \nonumber
\\ & \underset{(a)}{\le} \log\left(\trace(\exp(\log(\X_{t-1}))\exp(-\eta_{t-1}\widehat{\nabla}_{t-1}))\right) \nonumber
\\ & =\log\left(\trace(\X_{t-1}\exp(-\eta_{t-1}\widehat{\nabla}_{t-1}))\right) \nonumber
\\ & \underset{(b)}{\le} \log\left(\sum_{i=1}^{n}\lambda_i(\X_{t-1})\lambda_i(\exp(-\eta_{t-1}\widehat{\nabla}_{t-1}))\right) \nonumber
\\ & = \log\left(\sum_{i=1}^{n}\lambda_i(\X_{t-1})\exp(\lambda_i(-\eta_{t-1}\widehat{\nabla}_{t-1}))\right) \nonumber
\\ & \le \log\left(\sum_{i=1}^{n}\lambda_i(\X_{t-1})\exp(\eta_{t-1}\Vert\widehat{\nabla}_{t-1}\Vert_2)\right)= \eta_{t-1}\Vert\widehat{\nabla}_{t-1}\Vert_2 \le \eta_{t-1}G,
\end{align}
where (a) follows from the Golden-Thompson inequality\footnote{The Golden-Thompson inequality states that for any two matrices $\A,\B\in\mbS^n$ it holds that $\trace(\exp(\A+\B)) \leq \trace(\exp(\A)\exp(\B))$ \cite{GoldenThompson}.}, and (b) follows from the von Neumann inequality.

Similarly to the proof of \eqref{ineq:middleOfProof1} and using H\"{o}lder's inequality, we get
\begin{align} \label{ineq:boundedDifferences3}
\E[-\log(b_{t-1})\vert\X_{t-1}] & \le \E[\eta_{t-1}\langle \X_{t-1},\widehat{\nabla}_{t-1}\rangle\vert\X_{t-1}] \nonumber
\\ & \le \E[\eta_{t-1}\Vert\X_{t-1}\Vert_*\Vert\widehat{\nabla}_{t-1}\Vert_2\vert\X_{t-1}] \le \eta_{t-1}G.
\end{align}

Plugging \eqref{ineq:boundedDifferences1},\eqref{ineq:boundedDifferences2} and \eqref{ineq:boundedDifferences3} into \eqref{eq:boundedDifferences} we have
\begin{align*}
W_t-\E[W_{t}\vert W_1,\ldots,W_{t-1}] & \le \eta_{t-1}\Vert\widehat{\nabla}_{t-1}-\nabla f(\X_{t-1})\Vert_2+2\eta_{t-1}G
\le 4\eta_{t-1}G.
\end{align*}

We will now upper-bound the conditional variance. For any $2\le t\le T$ it holds that
\begin{align} \label{ineq:varBound}
\var[W_{t}\vert W_1,\ldots,W_{t-1}] & = \var[\breg(\X^*,\Z_{t})\vert \X_{t-1}] \nonumber
\\ & \underset{(a)}{=} \var[\trace(\X^*(\log(\X_{t-1})-\eta_{t-1}\widehat{\nabla}_{t-1}))-\log(b_{t-1})\vert \X_{t-1}] \nonumber
\\ & \underset{(b)}{\le} 2\var[\eta_{t-1}\trace(\X^*\widehat{\nabla}_{t-1})\vert \X_{t-1}]+2\var[\log(b_{t-1})\vert \X_{t-1}],
\end{align}
where (a) follows from \eqref{eq:traceYt/bt}, and (b) follows since $\var(X+Y)\le2\var(X)+2\var(Y)$.

\begin{align} \label{ineq:varBound1}
\var[\eta_{t-1}\trace(\X^*\widehat{\nabla}_{t-1})\vert \X_{t-1}] & = \var[\eta_{t-1}\trace(\X^*\widehat{\nabla}_{t-1})-\eta_{t-1}\trace(\X^*\nabla f(\X_{t-1}))\vert \X_{t-1}] \nonumber
\\ & \le \eta_{t-1}^2\E\left[\left(\trace(\X^*(\widehat{\nabla}_{t-1}-\nabla f(\X_{t-1}))\right)^2\Big\vert \X_{t-1}\right] \nonumber
\\ & \underset{(a)}{\le} \eta_{t-1}^2\E\left[\left(\Vert\X^*\Vert_*\Vert\widehat{\nabla}_{t-1}-\nabla f(\X_{t-1})\Vert_2\right)^2\Big\vert \X_{t-1}\right] \nonumber \\
&\le \eta_{t-1}^2\sigma^2/L,
\end{align}
where (a) follows from H\"{o}lder's inequality.

\begin{align} \label{ineq:varBound2}
& \var[\log(b_{t-1})\vert\X_{t-1}] = \var\left[\log\left(\trace(\exp(\log(\X_{t-1})-\eta_{t-1}\widehat{\nabla}_{t-1}))\right)\Big\vert\X_{t-1}\right] \nonumber
\\ & = \var\left[\log\left(\trace(\exp(\log(\X_{t-1})-\eta_{t-1}\widehat{\nabla}_{t-1}))\right)+\log\left(\exp(\eta_{t-1}\lambda_n(\nabla f(\X_{t-1})))\right)\Big\vert\X_{t-1}\right] \nonumber
\\ & = \var\left[\log\left(\trace(\exp(\log(\X_{t-1})-\eta_{t-1}\widehat{\nabla}_{t-1}))\exp(\eta_{t-1}\lambda_n(\nabla f(\X_{t-1})))\right)\Big\vert\X_{t-1}\right] \nonumber
\\ & \le \E\left[\left(\log\left(\trace(\exp(\log(\X_{t-1})-\eta_{t-1}\widehat{\nabla}_{t-1}))\exp(\eta_{t-1}\lambda_n(\nabla f(\X_{t-1})))\right)\right)^2\Big\vert\X_{t-1}\right] \nonumber
\\ & \underset{(a)}{\le} \E\left[\left(\log\left(\trace(\exp(\log(\X_{t-1}))\exp(-\eta_{t-1}\widehat{\nabla}_{t-1}))\exp(\eta_{t-1}\lambda_n(\nabla f(\X_{t-1})))\right)\right)^2\Big\vert\X_{t-1}\right] \nonumber
\\ & =  \E\left[\left(\log\left(\trace(\X_{t-1}\exp(-\eta_{t-1}\widehat{\nabla}_{t-1}))\exp(\eta_{t-1}\lambda_n(\nabla f(\X_{t-1})))\right)\right)^2\Big\vert\X_{t-1}\right] \nonumber
\\ & \underset{(b)}{\le} \E\left[\left(\log\left(\sum_{i=1}^{n}\lambda_i(\X_{t-1})\lambda_i(\exp(-\eta_{t-1}\widehat{\nabla}_{t-1}))\exp(\eta_{t-1}\lambda_n(\nabla f(\X_{t-1})))\right)\right)^2\Bigg\vert\X_{t-1}\right] \nonumber
\\ & \underset{(c)}{\le} \E\left[\left(\log\left(\sum_{i=1}^{n}\lambda_i(\X_{t-1})\exp(\eta_{t-1}\lambda_i(\nabla f(\X_{t-1})-\widehat{\nabla}_{t-1}))\right)\right)^2\Bigg\vert\X_{t-1}\right] \nonumber
\\ & \le \E\left[\left(\log\left(\sum_{i=1}^{n}\lambda_i(\X_{t-1})\exp(\eta_{t-1}\Vert\widehat{\nabla}_{t-1}-\nabla f(\X_{t-1})\Vert_2)\right)\right)^2\Bigg\vert\X_{t-1}\right] \nonumber
\\ & = \eta_{t-1}^2\E\left[\Vert\widehat{\nabla}_{t-1}-\nabla f(\X_{t-1})\Vert_2^2\big\vert\X_{t-1}\right]
\le \eta_{t-1}^2\sigma^2/L,
\end{align}
where (a) follows from the Golden–Thompson inequality, (b) follows from the von Neumann inequality, and (c) follows from Weyl's inequality.

Plugging \eqref{ineq:varBound1} and \eqref{ineq:varBound2} into \eqref{ineq:varBound}, we get
\begin{align*}
\var[W_{t}\vert W_1,\ldots,W_{t-1}] & \le 4\eta_{t-1}^2\sigma^2/L.
\end{align*}

Now, using a standard concentration argument for sub-martingales (see Theorem 7.3 in \cite{concentrationBound}, which we apply with parameters $\sigma_i=4\eta_{i-1}^2\sigma^2/L$, $\phi_i=0$, $a_i=0$, and $M=\max_{t}\lbrace4\eta_{t-1}G\rbrace=4\eta_{1}G$), we have that for any $\Delta>0$ and $t\in[T]$,
\begin{align*}
\Pr(W_t\ge W_1+\Delta) & \le \exp\left(\frac{-\Delta^2}{\sum_{i=1}^{t-1}8\eta_{i}^2\sigma^2/L+8/3\eta_1G\Delta}\right)
\le \exp\left(\frac{-\Delta^2}{\sum_{i=1}^{T-1}8\eta_{i}^2\sigma^2/L+8/3\eta_1G\Delta}\right).
\end{align*}

Let $\Delta=\sqrt{2\sum_{i=1}^{T-1}8\eta_{i}^2\sigma^2/L}\sqrt{\log\left(\frac{1}{p'}\right)}$, and suppose that it satisfies $\sum_{i=1}^{T-1}8\eta_{i}^2\sigma^2/L \ge 8/3\eta_1G\Delta$, which in turn is equivalent to the condition
\begin{align*}
\sqrt{\sum_{i=1}^{T-1}\eta_{i}^2\sigma^2/L}\ge \frac{1}{3}\eta_1G\sqrt{2\log\left(\frac{1}{p'}\right)}.
\end{align*}

Then, it holds that $\Pr(W_t\ge W_1+\Delta)\le p'$, which equivalently implies with probability at least $1-p'$
\begin{align*}
\breg(\X^*,\Z_t) \le \breg(\X^*,\Z_1)+\frac{1}{2}G^2\sum_{i=1}^{t-1}\eta_i^2+\sum_{i=1}^{t-1}(\breg(\X^*,\X_{i})-\breg(\X^*,\Z_{i}))+\Delta.
\end{align*}

Plugging in $\breg(\X^*,\Z_t)=\breg(\X^*,\X_t)-(\breg(\X^*,\X_t)-\breg(\X^*,\Z_t))$ and recalling that $\Z_1=\X_1$, we have that with probability at least $1-p'$ it holds that
\begin{align*}
\breg(\X^*,\X_t) \le \breg(\X^*,\X_1)+\frac{1}{2}G^2\sum_{i=1}^{t-1}\eta_i^2+\sum_{i=1}^{t}(\breg(\X^*,\X_{i})-\breg(\X^*,\Z_{i}))+\Delta.
\end{align*}

The lemma now follows from setting $p' = p/T$ and using the union-bound.

\end{proof}

We can now finally state and prove our local convergence result for Algorithm \ref{alg:LRMD} with stochastic updates. We prove the local convergence under the assumption that the SVD rank parameter $r$ in Algorithm \ref{alg:LRMD} is set to exactly the rank of the corresponding optimal solution $r^*$ (similarly to our first theorem in the deterministic setting --- Theorem \ref{thm:smoothCaseDeminish}).
 
\begin{theorem} \label{thm:StochasticPutTogether}
Let $\X^*\in\mathcal{X}^*$ be an optimal solution such that $\rank(\X^*)=r^*$ and suppose $\X^*$ satisfies Assumption \ref{ass:strictcomp} with some parameter $\delta >0$. Consider running Algorithm \ref{alg:LRMD} for $T$ iterations with SVD rank parameter $r=r^*$, a fixed minibatch-size $L\ge\max\left\lbrace\frac{16\sigma^2}{G^2R_0^2}\log\left(8T\right),\frac{128G^2}{\delta^2}\log(nT)\right\rbrace$ and a fixed step-size  $\eta=\frac{R_0}{2G\sqrt{T}}$, where
\begin{align*}
R_0 :=  \frac{1}{8}\left[2\beta+\left(1+\frac{2\sqrt{2r^*}}{\lambda_{r^*}(\X^*)}\right)G\right]^{-1}\delta.
\end{align*}
Suppose further that for all $t\geq 0$: $\varepsilon_t=\frac{9}{32}\frac{\tilde{\varepsilon}_0}{(t+c+1)^2}$ and $c\ge\frac{\delta\sqrt{\tilde{\varepsilon}_0}}{16G\sqrt{T}}$ for some  $\tilde{\varepsilon}_0 \leq R_0^2$. Finally, assume the initialization matrix $\X_0$ satisfies $\rank(\X_0) = r^*$, and the parameters $\X_0,\X^*,\tilde{\varepsilon}_0,R_0$ satisfy the initialization condition in Eq. \eqref{eq:init:equalRank} (substituting $\X=\X_0$, $\varepsilon=\tilde{\varepsilon}_0$, $R=R_0$).
Then, for any $T$ sufficiently large and $\bar{\X}\sim\textrm{Uni}\{1,\dots,T\}$ it holds with probability at least $1/2$ that
\[f(\bar{\X})-f(\X^*)=\frac{12GR_0}{\sqrt{T}}.\]
\end{theorem}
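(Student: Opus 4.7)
The plan is to mirror the deterministic analysis (Theorem \ref{thm:smoothCaseDeminish}) but replace the pathwise ``Bregman-distance-does-not-grow'' argument by a probabilistic one. Concretely, I will define a high-probability ``good event'' under which (i) all iterates stay inside a ball around $\X^*$ on which Lemma \ref{lemma:StochasticRadius} can be invoked, and (ii) every mini-batched stochastic gradient is a sufficiently accurate estimator of $\nabla f(\X_t)$. Under this event I will invoke Theorem \ref{thm:StochasticConvergence} to obtain an expected convergence bound, and finally convert it into the stated $1/2$-probability bound by Markov's inequality.

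\textbf{Step 1 (Gradient noise concentration).} Applying the matrix Hoeffding inequality to the mini-batched gradient $\widehat{\nabla}_t=\tfrac{1}{L}\sum_{i=1}^L \widehat{\nabla}_t^{(i)}$ (each summand bounded in spectral norm by $G$) and taking a union bound over $t\in[T]$, the assumption $L\ge \tfrac{128G^2}{\delta^2}\log(nT)$ yields an event $E_1$ of probability at least $7/8$ on which $\xi_t:=\Vert\widehat{\nabla}_t-\nabla f(\X_t)\Vert_2\le \delta/8$ for every $t\in[T]$.

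\textbf{Step 2 (Iterate concentration and inductive containment).} I will invoke Lemma \ref{lemma:concentrationBound} with failure probability $1/8$ and step-size $\eta_t\equiv\eta=R_0/(2G\sqrt{T})$. The condition \eqref{ineq:conditionOfConcentrationBoundLemma} is ensured by the mini-batch bound $L\ge \tfrac{16\sigma^2}{G^2R_0^2}\log(8T)$, and the resulting fluctuation term $\Delta$ satisfies $\Delta\lesssim R_0^2$. Call this event $E_2$, and set $E:=E_1\cap E_2$, which has probability at least $3/4$. On $E$ I will prove, by induction on $t$, that $\breg(\X^*,\X_t)\le 2R_0^2$ and $\breg(\X^*,\X_{t+1})-\breg(\X^*,\Z_{t+1})\le 2\varepsilon_t$. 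For the inductive step, the precondition $\lambda_{r^*+1}(\X_t)\le \varepsilon_{t-1}/(n-r)$ of Lemma \ref{lemma:StochasticRadius} is automatic because the algorithm with $r=r^*$ forces equality; meanwhile the radius condition \eqref{eq:StochasticRadiusOfconvergence} is met because $R_0$ was chosen to be an eighth (rather than a quarter) of the threshold, leaving room for the $-2\xi\ge-\delta/4$ slack, the $-\eta^{-1}\log(\varepsilon_{t-1}/\varepsilon_t)$ slack (controlled via the $c\ge\delta\sqrt{\tilde\varepsilon_0}/(16G\sqrt T)$ condition for large $T$), and the stochastic fluctuations bounded in Step 2. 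Summing $\varepsilon_t=O(1/t^2)$ gives $\sum_{t}\varepsilon_t\le \tfrac{9\tilde\varepsilon_0\pi^2}{32\cdot 6}=O(R_0^2)$, closing the induction.

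\textbf{Step 3 (Expected convergence and Markov).} Under $E$, Theorem \ref{thm:StochasticConvergence} gives
\begin{align*}
\E[f(\bar\X)-f(\X^*)\mid E]\le \frac{\breg(\X^*,\X_1)+\tfrac{1}{2}G^2T\eta^2+2\sum_{t=1}^{T-1}\varepsilon_t}{T\eta}.
\end{align*}
With $\eta=R_0/(2G\sqrt T)$, each of the three terms in the numerator is $O(R_0^2)$, so the ratio is at most $4GR_0/\sqrt T$ (after carefully fixing absolute constants from Steps 1--2). Markov's inequality then gives $\Pr\!\left(f(\bar\X)-f(\X^*)>12GR_0/\sqrt T\mid E\right)\le 1/3$, and combining with $\Pr(E)\ge 3/4$ yields the claimed bound with probability at least $3/4\cdot 2/3=1/2$.

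\textbf{Main obstacle.} The delicate point is the simultaneous closure of the induction in Step 2: the containment of $\breg(\X^*,\X_t)$ relies on Lemma \ref{lemma:concentrationBound}, whose right-hand side itself features the running sum $\sum_i(\breg(\X^*,\X_i)-\breg(\X^*,\Z_i))$ which is only controllable once Lemma \ref{lemma:StochasticRadius} is applicable, i.e.\ once $\breg(\X^*,\X_i)$ is already contained. Threading this loop requires that the stochastic fluctuation term $\Delta$, the variance contribution $\tfrac{1}{2}G^2T\eta^2$, the warm-start slack $\breg(\X^*,\X_1)$, and the accumulated low-rank errors $2\sum_i\varepsilon_i$ each be a strict fraction of $R_0^2$, which is precisely what the choices of $\eta$, $L$, $R_0$, $\tilde\varepsilon_0$, and $c$ are calibrated to guarantee.
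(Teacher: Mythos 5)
Your plan follows essentially the same route as the paper's proof: a Hoeffding-type concentration bound on the mini-batched gradients, Lemma \ref{lemma:concentrationBound} to control the martingale fluctuations of $\breg(\X^*,\X_t)$, an induction that keeps the iterates inside the ball so that Lemma \ref{lemma:StochasticRadius} is applicable at every step (with the precondition $\lambda_{r^*+1}(\X_t)\le\varepsilon_{t-1}/(n-r)$ automatic because $r=r^*$), and finally Theorem \ref{thm:StochasticConvergence} combined with Markov's inequality. Your ``main obstacle'' paragraph is precisely the circularity the paper resolves by the same induction, so the overall architecture is correct.

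The one step that does not go through as written is the conditioning in Step 3. Theorem \ref{thm:StochasticConvergence} is an \emph{unconditional} expectation bound whose proof relies on $\E[\widehat{\nabla}_t\mid\X_t]=\nabla f(\X_t)$; conditioning on your event $E$, which is measurable with respect to the whole noise trajectory, breaks this unbiasedness, so $\E[f(\bar{\X})-f(\X^*)\mid E]\le\cdots$ cannot be obtained simply by citing that theorem. The paper instead applies Theorem \ref{thm:StochasticConvergence} unconditionally, applies Markov's inequality at level $3/4$ (a factor of $4$), bounds the error terms by $2\sum_t\varepsilon_t\le\frac{3}{8}R_0^2$ on the $3/4$-probability event delivered by the induction, and finishes with a union bound over the two failure events of probability $1/4$ each; this is exactly where the final $1/2$ and the constant $12$ come from (numerator at most $\frac{3}{2}R_0^2$ and $T\eta=R_0\sqrt{T}/(2G)$). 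Two smaller calibration remarks: with the batch size stated in the theorem, the Hoeffding computation gives $\Vert\widehat{\nabla}_t-\nabla f(\X_t)\Vert_2\le\delta/4$ (your $\delta/8$ would need a larger $L$), and the budget in \eqref{eq:StochasticRadiusOfconvergence} still closes because $\delta-2\xi-\frac{1}{\eta}\log(\varepsilon_{t-1}/\varepsilon_t)\ge\delta-\frac{\delta}{2}-\frac{\delta}{4}=\frac{\delta}{4}$, which matches $\sqrt{2}R_0$ for $R_0=\frac{1}{8}\bigl[2\beta+\bigl(1+\frac{2\sqrt{2r^*}}{\lambda_{r^*}(\X^*)}\bigr)G\bigr]^{-1}\delta$; moreover, the fluctuation term $\Delta$ of Lemma \ref{lemma:concentrationBound} is absorbed into the containment radius $\breg(\X^*,\X_t)\le 2R_0^2$ (together with the $\frac{1}{2}G^2T\eta^2=\frac{1}{8}R_0^2$ term), not into the $\delta$ budget of \eqref{eq:StochasticRadiusOfconvergence} as your Step 2 suggests.
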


\begin{proof}

For all $t\in[T-1]$ it holds that
\[ \Vert  (\nabla f(\X_t)-\widehat{\nabla}_t)^2\Vert \le 2\Vert  \nabla_t\Vert^2+2\Vert\widehat{\nabla}_t\Vert^2 \le 4G^2.\]
Thus, using a standard Hoeffding concentration argument (see for instance \cite{HoeffdingBound}), we have that with a batch-size of $L\ge\frac{128G^2}{\delta^2}\log(nT)$, 
\begin{align*}
\Pr\left(\Vert\nabla f(\X_t)-\widehat{\nabla}_t\Vert\ge\frac{\delta}{4}\right) & \le 2n\cdot\exp\left({-\frac{\delta^2}{16}}\Bigg/{\frac{8G^2}{\frac{128G^2}{\delta^2}\log(nT)}}\right) 
\\ & = 2n\cdot\exp\left(-\log(nT)\right)
= \frac{2}{T}.
\end{align*}
Therefore, for a large enough $T$, with probability at least $9/10$, it holds for all $t\in[T-1]$ that 
\begin{align} \label{xsiBound}
\Vert \widehat{\nabla}_t-\nabla f(\X_t)\Vert_2\le\frac{\delta}{4}.
\end{align}

With a constant step-size $\eta=\frac{R_0}{2G\sqrt{T}}$, and constant batch-size $L\ge\frac{16\sigma^2}{G^2 R_0^2}\log\left(8T\right)$, condition \eqref{ineq:conditionOfConcentrationBoundLemma} holds with $p=1/8$ for any sufficiently large $T$. Thus, invoking Lemma \ref{lemma:concentrationBound}, we have with probability at least $7/8$ it holds that for all $t\in[T]$,
\begin{align} \label{ineq:StochasticRecursionBound}
\breg(\X^*,\X_t) & \le \breg(\X^*,\X_1)+\frac{1}{2}G^2\sum_{i=1}^{t-1}\eta_i^2+\sum_{i=1}^{t}(\breg(\X^*,\X_{i})-\breg(\X^*,\Z_{i})) \nonumber \\
&~+\sqrt{16\sigma^2/L\sum_{i=1}^{T-1}\eta_{i}^2}\sqrt{\log\left(\frac{T}{p}\right)} \nonumber
\\ & \le \frac{13}{8}\breg(\X^*,\X_1)+\sum_{i=1}^{t}(\breg(\X^*,\X_{i})-\breg(\X^*,\Z_{i})).
\end{align}

In addition, note that by our choice of the sequence $\lbrace \varepsilon_t\rbrace_{t\ge0}$, we have that the sequence $\left\lbrace-\frac{1}{\eta}\log\left(\frac{\varepsilon_{t-1}}{\varepsilon_{t}}\right)\right\rbrace_{t\ge1}$ is monotone non-decreasing, and thus, for all $t\ge1$:
\begin{align*}
-\frac{1}{\eta}\log\left(\frac{\varepsilon_{t-1}}{\varepsilon_{t}}\right) & \ge -\frac{1}{\eta}\log\left(\frac{\varepsilon_{0}}{\varepsilon_{1}}\right) = -\frac{4G\sqrt{T}}{R_0}\log\left(\frac{c+2}{c+1}\right)
\ge -\frac{4G\sqrt{T}}{R_0}\log\left(1+\frac{1}{c}\right) \\
&\ge -\frac{4G\sqrt{T}}{R_0}\frac{1}{c} \ge -\frac{\delta}{4},
\end{align*}
where the last inequality follows from plugging in our choice for $c$. 

Thus, we have that for all $t\geq 1$:
\begin{align}\label{eq:thm:smoothConv:8}
&\frac{1}{\sqrt{2}}\left[2\beta+\left(1+\frac{2\sqrt{2r^*}}{\lambda_{r^*}(\X^*)}\right)G\right]^{-1}\frac{\delta}{4} \le \frac{1}{\sqrt{2}}\left[2\beta+\left(1+\frac{2\sqrt{2r^*}}{\lambda_{r^*}(\X^*)}\right)G\right]^{-1}\left(\frac{\delta}{2}-\frac{1}{\eta_t}\log\left(\frac{\varepsilon_{t-1}}{\varepsilon_{t}}\right)\right).
\end{align}

Thus, in order to invoke Lemma \ref{lemma:StochasticRadius} for all $t\geq 1$ it suffices to prove that for all $t\geq 1$: i. $\lambda_{r^*+1}(\X_t) \leq \frac{\varepsilon_{t-1}}{n-r}$ and ii. $\sqrt{B(\X^*,\X_t)} \leq \textrm{LHS of } \eqref{eq:thm:smoothConv:8}$.

The requirement $\lambda_{r^*+1}(\X_t) \leq \frac{\varepsilon_{t-1}}{n-r}$ holds trivially by the design of the algorithm and since the SVD parameter satisfies $r=r^*$. 

We now prove by induction that indeed for all $t\geq 1$, it holds with probability at least $7/8$ that $\sqrt{B(\X^*,\X_t)} \leq \textrm{LHS of } \eqref{eq:thm:smoothConv:8}$.
The base case holds due to the initialization choice.
Now, if the assumption holds for all $i\in[t-1]$ then invoking Lemma \ref{lemma:StochasticRadius} it holds that $\breg(\X^*,\X_{i})-\breg(\X^*,\Z_{i})\le 2\varepsilon_i$ for all $i\in[t]$. From \eqref{ineq:StochasticRecursionBound}, this implies that with probability at least $7/8$,
\begin{align*} 
\breg(\X^*,\X_t) & \le \frac{13}{8}\breg(\X^*,\X_1)+\sum_{i=1}^{t}2\varepsilon_i \nonumber
=\frac{13}{8}\breg(\X^*,\X_1)+\frac{9\tilde{\varepsilon}_0}{16}\sum_{i=1}^{t}\frac{1}{(i+c+1)^2} 
\\ & \le \frac{13}{8}\breg(\X^*,\X_1)+\frac{9\tilde{\varepsilon}_0}{16}\sum_{i=1}^{\infty}\frac{1}{(i+1)^2} \le 2R_0^2.
\end{align*}

Thus, we obtain that with probability at least $7/8$ for all $t\ge1$: 
\begin{align} \label{BtBound}
\sqrt{\breg(\X^*,\X_t)} & \le \sqrt{2}R_0 = \frac{1}{\sqrt{2}}\left[2\beta+\left(1+\frac{2\sqrt{2r^*}}{\lambda_{r^*}(\X^*)}\right)G\right]^{-1}\frac{\delta}{4}, 
\end{align}
and the induction holds.

By combining \eqref{xsiBound}, \eqref{BtBound}, and invoking Lemma \ref{lemma:StochasticRadius} we obtain with probability at least $3/4$ that for all $t\ge1$,
\begin{align} \label{ineq:boundOnStochasticEpsInProof}
\breg(\X^*,\X_{t})-\breg(\X^*,\Z_{t})\le2\varepsilon_t.
\end{align}

From Theorem \ref{thm:StochasticConvergence}, it holds that for any $\eta>0$, after $T$ iterations,
\begin{align*}
\E\left[f(\bar{\X})\right]-f(\X^*) & \le \frac{\breg(\X^*,\X_1)+\frac{G^2}{2}T\eta^2+\sum_{t=1}^T \E[\breg(\X^*,\X_{t})-\breg(\X^*,\Z_{t})]}{T\eta}.
\end{align*}
Then, from Markov's inequality, it holds with probability at least $3/4$ that
\begin{align} \label{ineq:stochasticBound}
f(\bar{\X})-f(\X^*) & \le 4\frac{\breg(\X^*,\X_1)+\frac{G^2}{2}T\eta^2+\sum_{t=1}^T \E[\breg(\X^*,\X_{t})-\breg(\X^*,\Z_{t})]}{T\eta}.
\end{align}

In addition, note that 
\begin{align} \label{ineq:boundEpsStochastic}
\sum_{t=1}^T 2\varepsilon_{t} & = \frac{9\tilde{\varepsilon}_0}{16}\sum_{t=1}^T \frac{1}{(t+c+1)^2} \le \frac{9\tilde{\varepsilon}_0}{16}\sum_{t=1}^{\infty} \frac{1}{(t+1)^2} \le \frac{3}{8}R_0^2.
\end{align}

Thus, by plugging \eqref{ineq:boundOnStochasticEpsInProof}, \eqref{ineq:boundEpsStochastic}, and our choice for $\eta$ into \eqref{ineq:stochasticBound}, we have with probability at least $1/2$ that
\begin{align*}
f(\bar{\X})-f(\X^*) & \le \frac{12GR_0}{\sqrt{T}}.
\end{align*}
\end{proof}

The following corollary states the overall sample complexity of our method for obtaining $\varepsilon$ approximation error from ``warm-start'' initialization, which, up to logarithmic factors, is optimal in $\varepsilon$.

\begin{corollary}
Under the assumptions of Theorem \ref{thm:StochasticPutTogether}, the overall sample complexity to achieve $f(\bar{\X})-f(\X^*)\le\varepsilon$ with probability at least $1/2$, when initializing from a ``warm-start", is upper-bounded by
\[ \tilde{O}\left(\frac{1}{\varepsilon^2}\max\left\lbrace \sigma^2,G^2\lambda^2_{r^*}(\X^*)\frac{1}{r^*}\right\rbrace\right). \]
\end{corollary}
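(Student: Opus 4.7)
The plan is to simply combine the per-iteration sample cost (i.e., the mini-batch size $L$) with the iteration count $T$ needed to drive the error in Theorem \ref{thm:StochasticPutTogether} down to $\varepsilon$. First, setting $12GR_0/\sqrt{T} \leq \varepsilon$ shows that it suffices to take $T = O\left(G^2R_0^2/\varepsilon^2\right)$ iterations. The total number of stochastic-gradient queries is then $T\cdot L$, and by the requirement in Theorem \ref{thm:StochasticPutTogether} we can take
\[
L = \tilde{O}\left(\max\left\{\frac{\sigma^2}{G^2 R_0^2},\ \frac{G^2}{\delta^2}\right\}\right),
\]
where the $\tilde{O}$ hides the $\log(nT)$ and $\log(8T)$ factors. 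Multiplying,
\[
T\cdot L = \tilde{O}\!\left(\frac{1}{\varepsilon^2}\max\!\left\{\sigma^2,\ \frac{G^4 R_0^2}{\delta^2}\right\}\right).
\]

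The second step is to simplify the term $G^4 R_0^2/\delta^2$ using the explicit definition
\[
R_0 = \frac{1}{8}\left[2\beta+\left(1+\tfrac{2\sqrt{2r^*}}{\lambda_{r^*}(\X^*)}\right)G\right]^{-1}\delta.
\]
Treating $\beta$ and the absolute constant $1$ as dominated by the $\frac{2\sqrt{2r^*}}{\lambda_{r^*}(\X^*)}G$ term (which is the regime of interest, since we think of $G$ as bounding the gradient norms and the eigen-gap parameter $1/\lambda_{r^*}(\X^*)$ as potentially large), one obtains $R_0 = \Theta\!\left(\delta\lambda_{r^*}(\X^*)/(G\sqrt{r^*})\right)$, so that
\[
\frac{G^4 R_0^2}{\delta^2} = \Theta\!\left(\frac{G^2\lambda^2_{r^*}(\X^*)}{r^*}\right).
\]
Plugging this back yields exactly the bound in the statement.

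The only subtle point is that the bound in Theorem \ref{thm:StochasticPutTogether} holds only with probability at least $1/2$, which is already the probability asserted in the corollary, so no additional boosting or union bound is needed. All the remaining logarithmic factors coming from $L$ (namely $\log(nT)$ and $\log(8T)$, where $T = \Theta(G^2R_0^2/\varepsilon^2)$) are absorbed into the $\tilde{O}(\cdot)$ notation, and this is in fact the main and only ``step'' in the argument: no new analysis is required, as the result is just algebraic bookkeeping on top of Theorem \ref{thm:StochasticPutTogether}.
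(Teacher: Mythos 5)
Your proposal is correct and follows essentially the same route as the paper's own proof: multiply the iteration count $T=O(G^2R_0^2/\varepsilon^2)$ coming from Theorem \ref{thm:StochasticPutTogether} by the mini-batch size $L$, obtain $\tilde{O}\big(\tfrac{1}{\varepsilon^2}\max\{\sigma^2, G^4R_0^2/\delta^2\}\big)$, and then simplify $G^4R_0^2/\delta^2$ using the explicit form of $R_0$. The only (harmless) difference is that your ``regime of interest'' hedge is unnecessary: since the corollary asserts only an upper bound, it suffices to drop the nonnegative terms $2\beta$ and $G$ from the bracket defining $R_0$, which gives $R_0\le \frac{\delta\,\lambda_{r^*}(\X^*)}{16\sqrt{2r^*}\,G}$ unconditionally and hence $G^4R_0^2/\delta^2 = O\big(G^2\lambda_{r^*}^2(\X^*)/r^*\big)$ without any assumption on which term dominates, exactly as the paper does.
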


\begin{proof}
We have from Theorem \ref{thm:StochasticPutTogether}, that with probability $1/2$ in order to achieve $f(\bar{\X})-f(\X^*)\le\varepsilon$, we need to run 
\begin{align*}
\tilde{O}\left(\frac{G^2R_0^2}{\varepsilon^2}\right)
\end{align*}
iterations.

The sample complexity is given by the mini-batch size times the number of iterations it takes to reach an $\varepsilon$ error.
Therefore, it is bounded by 
\begin{align*}
& \tilde{O}\left(\frac{G^2R_0^2}{\varepsilon^2}\cdot 
\max\left\lbrace\frac{\sigma^2}{G^2 R_0^2},\frac{G^2}{\delta^2}\right\rbrace\right)
= \tilde{O}\left(\frac{1}{\varepsilon^2}\max\left\lbrace \sigma^2,\frac{G^4R_0^2}{\delta^2}\right\rbrace\right)
\\ & = \tilde{O}\left(\frac{1}{\varepsilon^2}\max\left\lbrace \sigma^2,G^4\left(\frac{\lambda_{r^*}(\X^*)}{(\beta+G)\lambda_{r^*}(\X^*)+\sqrt{r^*}G}\right)^2\right\rbrace\right)
\\ & = \tilde{O}\left(\frac{1}{\varepsilon^2}\max\left\lbrace \sigma^2,G^4\left(\frac{\lambda_{r^*}(\X^*)}{\sqrt{r^*}G}\right)^2\right\rbrace\right)
 = \tilde{O}\left(\frac{1}{\varepsilon^2}\max\left\lbrace \sigma^2,G^2\lambda^2_{r^*}(\X^*)\frac{1}{r^*}\right\rbrace\right),
\end{align*}
where the second to last equation follows since $\lambda_{r^*}(\X^*)\le\frac{1}{r^*}\le\sqrt{r^*}$.

\end{proof}

\section{Experiments} \label{section:experiments}
In this section we present preliminary empirical evidence in support of our theoretical findings.

We consider the problem of recovering a low-rank matrix from quadratic measurements. Concretely, we consider the following optimization problem:
\begin{align} \label{eq:recoveryProb}
\min_{\X\succeq0\ \trace(\X)=\tau}\left\lbrace f(\X):= \frac{1}{2}\sum_{i=1}^m \left(\trace(\mathbf{a}_i^{\top}\X \mathbf{b}_i)-\y_i\right)^2\right\rbrace.
\end{align}

Throughout this section we focus on the deterministic setting in which full gradients of the objective  function in \eqref{eq:recoveryProb} are available.

We let $\M=(\sqrt{n}\V)(\sqrt{n}\V)^{\top}$ be the ground-truth low-rank matrix,  where $\V\in\reals^{n\times r}$ is generated by taking a random matrix with standard Gaussian entries and then normalizing it to have unit Frobenius norm, i.e., $\Vert\V\Vert_F=1$.
We generate $m$ pairs of random uniformly-distributed unit vectors $\lbrace(\mathbf{a}_i,\mathbf{b}_i)\rbrace_{i=1}^m\subset\reals^n\times\reals^n$. $\y_0\in\reals^m$, the vector of quadratic measurements of $\M$, is given by $\y_0(i)=\mathbf{a}_i^{\top}\M \mathbf{b}_i$. 
We then add noise to produce the noisy vector $\y=\y_0+\n$, where $\n=\kappa\Vert \y_0\Vert_2 \mathbf{v}$ for a random unit vector $\mathbf{v}\in\reals^m$ and $\kappa\in\reals$. In the following experiments, unless stated otherwise, we set $\kappa=1/2$.

In all of our experiments, we scale our final estimate for the ground-truth matrix $\M$ to be $\frac{\trace(\M)}{\tau}\X^*$, where $\tau$ is the trace bound in \eqref{eq:recoveryProb} and $\X^*$ is obtained by solving \eqref{eq:recoveryProb},  in order for it to have the same trace as $\M$.
We measure the relative recovery error by $\left\Vert\frac{\trace(\M)}{\tau}\X^*-\M\right\Vert_F^2\Big/\left\Vert\M\right\Vert_F^2$. Similarly, the initial relative error of the matrix $\X_0$ used to initialize our algorithm is given by $\left\Vert\frac{\trace(\M)}{\tau}\X_0-\M\right\Vert_F^2\Big/\left\Vert\M\right\Vert_F^2$.
The signal-to-noise ratio is given by $\Vert \y_0\Vert^2 /\Vert\n\Vert^2=4$ (under our fixed choice of $\kappa=1/2$).

Since we cannot solve Problem \eqref{eq:recoveryProb} exactly, to verify the near-optimality of the found solution $\X^*$, we compute for it the dual-gap  which is given by
\begin{align*}
\max_{\Z\succeq0\ \trace(\Z)=\tau}{\langle\X^*-\Z,\nabla f(\X^*)\rangle} = \trace((\X^*-\tau \mathbf{v}_{n} \mathbf{v}_{n}^{\top})\nabla f(\X^*)),
\end{align*}
where $\mathbf{v}_n$ is the eigen-vector corresponding to the smallest eigenvalue of $\nabla f(\X^*)$. Note that due to convexity of the objective, the dual gap is always an upper-bound on the approximation error w.r.t. function value.

For all experiments we set the initialization matrix to
\begin{align}\label{eq:exp:init}
\X_0 = \V_r\diag\left(\Pi_{\Delta_{\tau,r}}[\diag(-\Lambda_r)]\right)\V_r^{\top},
\end{align}
where $\Delta_{\tau,r}=\lbrace \z\in\reals^r~|~ \z\ge0,\ \sum_{i=1}^r \z_i=\tau\rbrace$ is the $\tau$-simplex in $\reals^r$ and $\Pi_{\Delta_{\tau,r}}[\cdot]$ denotes the Euclidean projection over it, $\V_r\Lambda_r\V_r$ is the rank-$r$ eigen-decomposition of $-\nabla f(\tau\U\U^{\top})$, and $\U\in\reals^{n\times r}$ is produced by taking a random matrix with standard Gaussian entries and normalizing it to have a unit Frobenius norm.

In all experiments we set the sequence $\{\varepsilon_t\}_{t\geq 0}$ in Algorithm \ref{alg:LRMD} to $\varepsilon_t=\frac{4}{5}\frac{1}{(t+c+1)^2}$ where $c=10$. We set the number of measurements in the objective \eqref{eq:recoveryProb} to $m=20nr$, the number of iterations to $T=200$, and the smoothness parameter to $\beta=0.4\sqrt{rn}$. When the ground-truth matrix is  rank-$1$ or rank-$5$ we set $\tau=0.5\cdot\trace(\M)$, and for rank-$20$ we set $\tau=0.65\cdot\trace(\M)$. We note that we choose the trace bound $\tau$ to be strictly smaller than $\trace(\M)$ to prevent the optimal solution from fitting some of the noise and resulting in a higher rank matrix. For every set of parameters ($r,n$) we take the averages of $20$ i.i.d runs.  

\subsection{Rank of ground-truth matrix is known}

In our first line of experiments we assume that $\rank(\M)$ is known and we set the SVD rank parameter in Algorithm \ref{alg:LRMD} to $r=\rank(\M)$. This parameter is also used to set the initialization matrix $\X_0$.

We record several quantities of interest in Table \ref{table:results}. Importantly, Table \ref{table:results} indicates that (i.) for our recovery setting the strict complementarity condition (Assumption \ref{ass:strictcomp}) indeed holds and seems dimension-independent, and (ii.) in all cases, the convergence certification condition 
\begin{align} \label{convergenceGuaranteeExperiment}
\log\left(\frac{(n-r)\lambda_{r+1}(\Y_t)}{\varepsilon_t b_t}\right)\le 2\varepsilon_t, \quad b_t =\sum_{i=1}^n\lambda_i(\Y_t)
\end{align}
(see Section \ref{sec:certificate}) holds from the early stages of the run (right from the first iteration in most cases) and throughout  all following iterations, implying the correct convergence of our low-rank MEG method, up to negligible error.

\begin{table}\renewcommand{\arraystretch}{1.3} 
\begin{subtable}{\textwidth}
        {\small
        \begin{center}
 	    \begin{tabular}{| c | c | c | c | c | c |} \hline 
   		dimension & avg. initialization & avg. 	recovery &  avg. gap in & avg. dual & avg. first iter. \\
    ($n$) &  error & error & $\nabla f(\X^*)$ & gap & \eqref{convergenceGuaranteeExperiment} holds \\ \hline
   $100$ & $0.1593$ & $0.0439$ & $5.3665$ & $0.0065$ & $1$ \\ \hline
   $200$ & $0.1674$ & $0.0471$ & $5.2046$ & $0.0129$ & $1$ \\ \hline
   $400$ & $0.1732$ & $0.0487$ & $5.3049$ & $0.0264$ & $1$ \\ \hline
   $600$ & $0.1702$ & $0.0489$ & $5.2400$ & $0.0395$ & $1$ \\ \hline
  \end{tabular}
  \caption*{$\rank(\M)=1$}\label{table:numericalResults1}
  \end{center}
  }  
  \vskip -0.2in
\end{subtable} 

\begin{subtable}{\textwidth}
        {\small
        \begin{center}
  \begin{tabular}{| c | c | c | c | c | c |} \hline 
    dimension & avg. initialization & avg. recovery &  avg. gap in & avg. dual & avg. first iter. \\
    ($n$) &  error & error & $\nabla f(\X^*)$ & gap & \eqref{convergenceGuaranteeExperiment} holds \\ \hline
   $100$ & $0.5763$ & $0.0780$ & $5.8528$ & $0.0079$ & $1$ \\ \hline
   $200$ & $0.2072$ & $0.0561$ & $5.6697$ & $0.0136$ & $1$ \\ \hline
   $400$ & $0.1307$ & $0.0451$ & $5.5942$ & $0.0271$ & $1$ \\ \hline
   $600$ & $0.1101$ & $0.0381$ & $5.5771$ & $0.0404$ & $1$ \\ \hline
  \end{tabular}
  \caption*{$\rank(\M)=5$}\label{table:numericalResults5}
  \end{center}
}  
  \vskip -0.2in
\end{subtable}

\begin{subtable}{\textwidth}
  {\small
  \begin{center}
  \begin{tabular}{| c | c | c | c | c | c |} \hline 
    dimension & avg. initialization & avg. recovery &  avg. gap in  & avg. dual & avg. first iter. \\
    ($n$) &  error & error & $\nabla f(\X^*)$ & gap & \eqref{convergenceGuaranteeExperiment} holds \\ \hline
   $100$ & $2.2782$ & $0.0638$ & $3.3870$ & $0.0167$ & $5.7$ \\ \hline
   $200$ & $1.2007$ & $0.0437$ & $3.3210$ & $0.0156$ & $8.6$ \\ \hline
   $300$ & $0.6729$ & $0.0365$ & $3.2839$ & $0.0265$ & $11.35$ \\ \hline
  \end{tabular}
  \caption*{$\rank(\M)=20$}\label{table:numericalResults20}
  \end{center}
  }  
  \vskip -0.2in
\end{subtable}
\caption{Results for the low-rank matrix recovery from quadratic measurements problem using the Low-Rank Matrix Exponentiated Gradient Method. The fourth column is the spectral gap $\lambda_{n-r}(\nabla{}f(\X^*)) - \lambda_{n}(\nabla{}f(\X^*))$, and the last column records the iteration of the algorithm at which  the convergence certificate \eqref{convergenceGuaranteeExperiment} begins to take effect.}
\label{table:results}
\end{table}\renewcommand{\arraystretch}{1} 

We move on to compare the empirical convergence of the standard MEG algorithm, Algorithm \ref{alg:exactMEG}, and our low rank variant, Algorithm \ref{alg:LRMD}. For every run we initialize both algorithms with the same matrix (as given in Eq. \eqref{eq:exp:init}). In addition to measuring convergence in function value, we also measure the Bregman distance $\breg(\W_t,\X_t)$ between the iterates $\lbrace \X_t\rbrace_{t\ge1}$ produced by our low-rank algorithm and the iterates $\lbrace \W_t\rbrace_{t\ge1}$ produced by the standard MEG algorithm. For all graphs we set $n=200$ and $\rank(\M)=1,5,20$. The results are shown in Figure \ref{fig:comparison between methods}.

As can be seen, both the standard MEG method and our low rank variant converge very similarly in terms of the function value.  In addition, it can be seen that the Bregman distance $\breg(\W_t,\X_t)$ decays very quickly, providing additional evidence for the correct convergence of our low rank variant.

\begin{figure}
\centering
  \begin{subfigure}[t]{0.49\textwidth}
    \includegraphics[width=\textwidth]{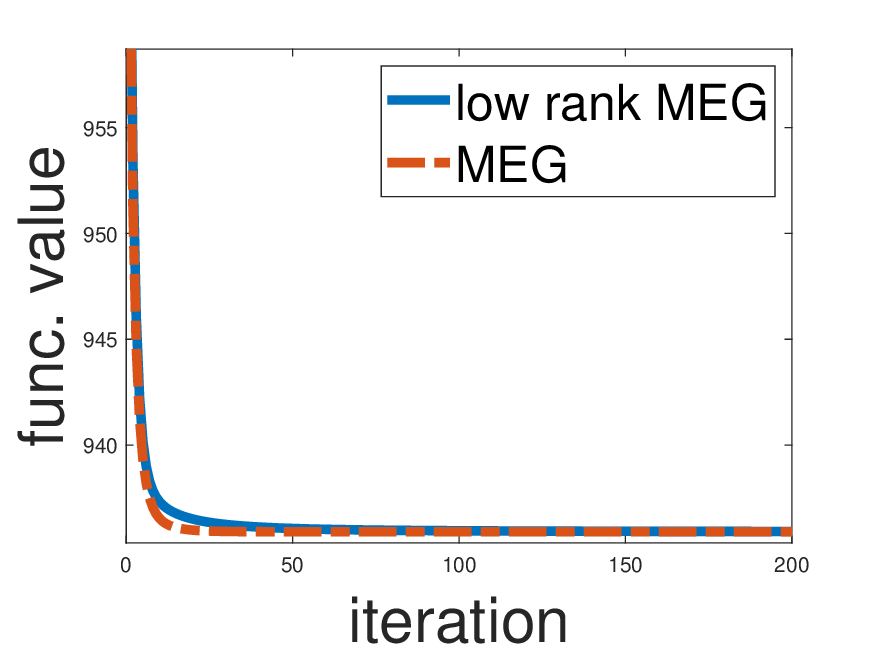}
    \label{fig:1}
  \end{subfigure}\hfil
  \begin{subfigure}[t]{0.49\textwidth}
    \includegraphics[width=\textwidth]{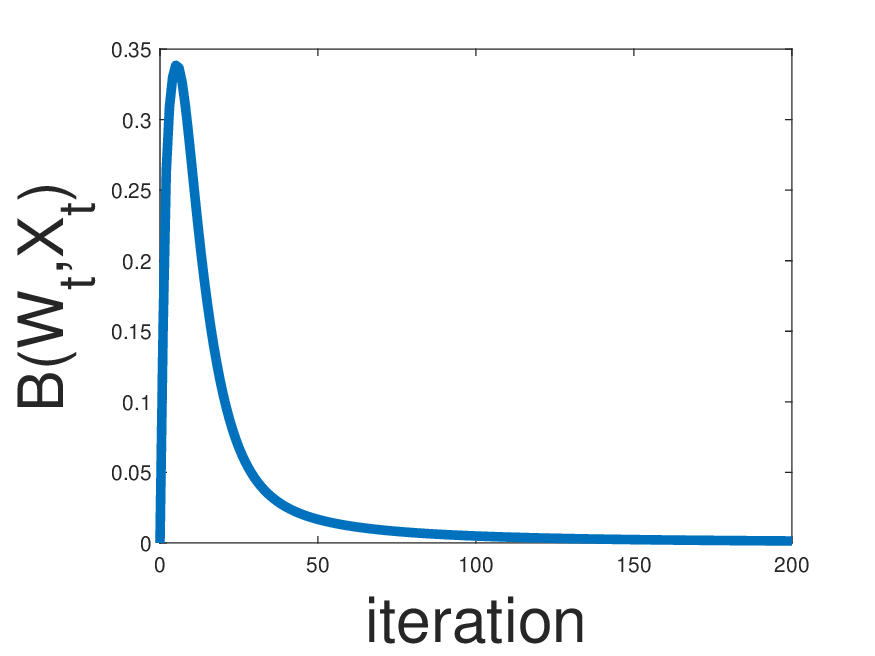}    
    \label{fig:2}
  \end{subfigure}\hfil
  \caption*{$\rank(\M)=1$}
  \label{fig:distIteration200_1}
\medskip
  \begin{subfigure}[t]{0.49\textwidth}
    \includegraphics[width=\textwidth]{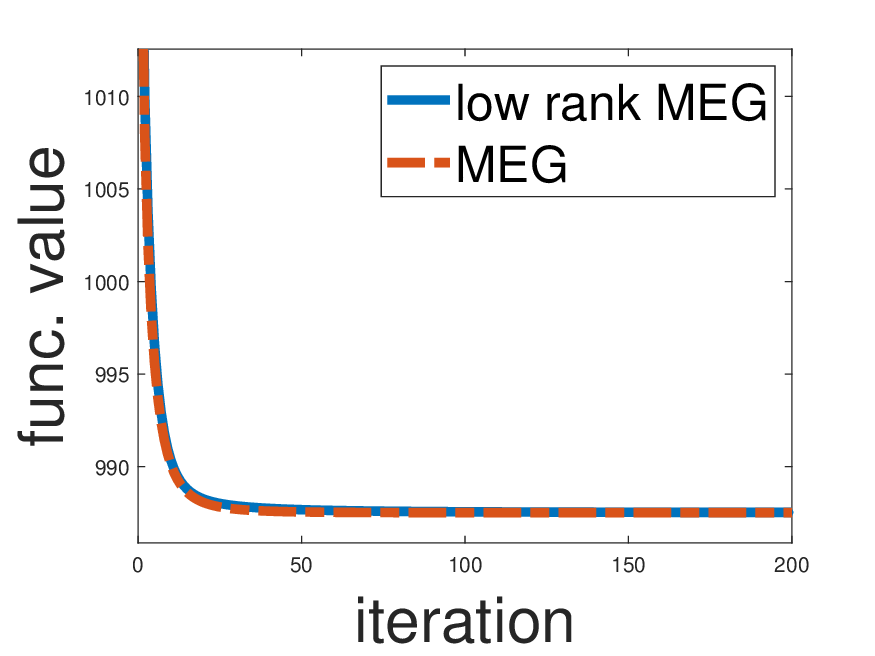}
    \label{fig:3}
  \end{subfigure}
  \begin{subfigure}[t]{0.49\textwidth}
    \includegraphics[width=\textwidth]{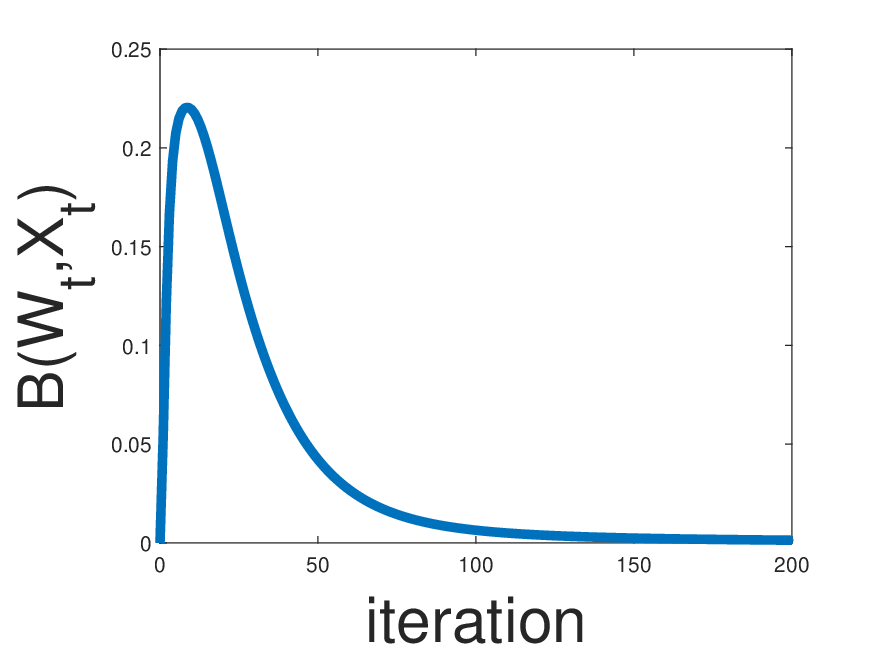}    
    \label{fig:4}
  \end{subfigure}
  \caption*{$\rank(\M)=5$}
  \label{fig:distIteration200_5}
\medskip
  \begin{subfigure}[t]{0.49\textwidth}
    \includegraphics[width=\textwidth]{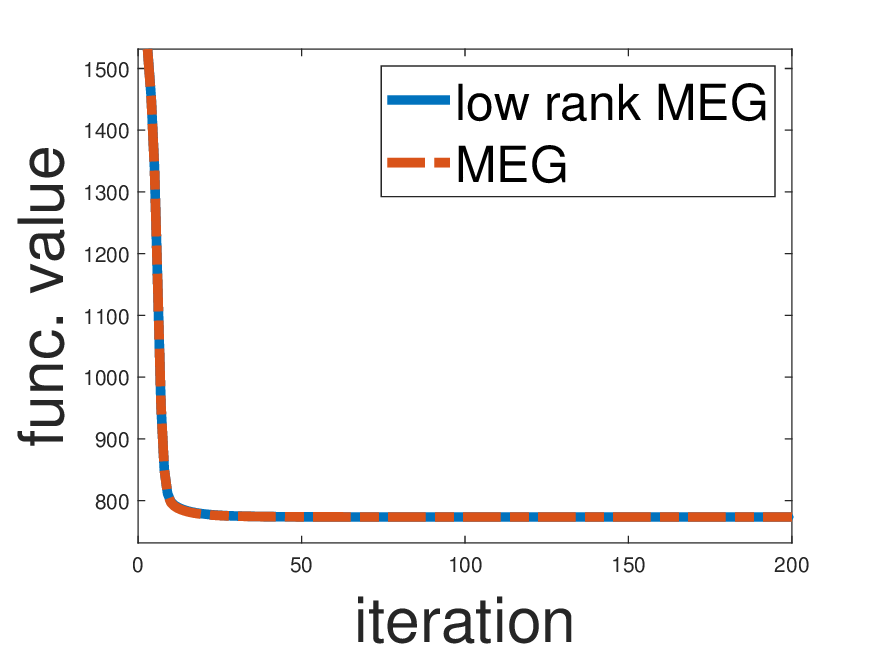}
    \label{fig:5}
  \end{subfigure}
  \begin{subfigure}[t]{0.49\textwidth}
    \includegraphics[width=\textwidth]{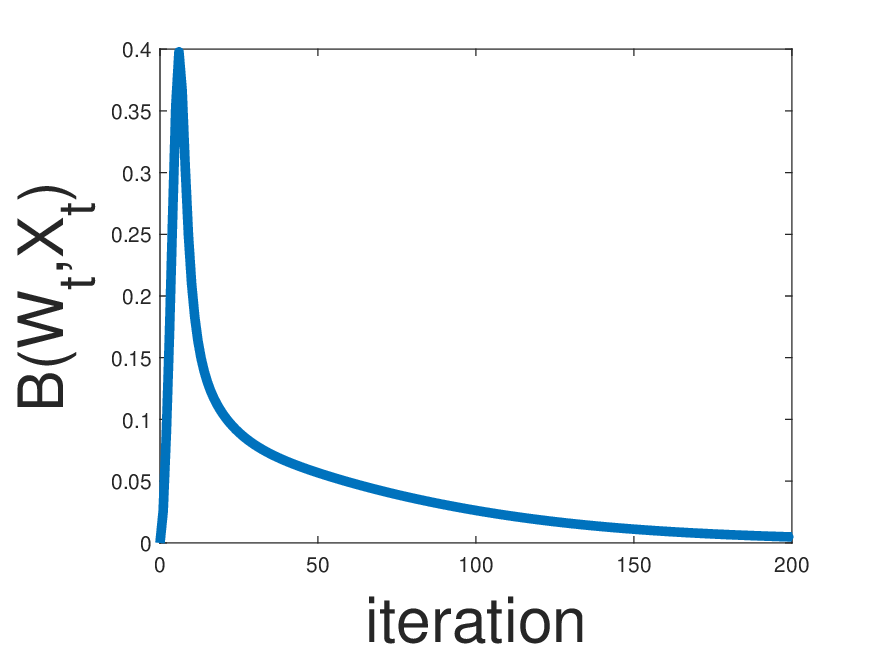}    
    \label{fig:6}
  \end{subfigure}
  \caption*{$\rank(\M)=20$}
  \caption{Comparison between the low-rank MEG algorithm and the standard MEG method for the low-rank matrix recovery from quadratic measurements problem with $n=200$. On the left we plot the convergence in function values for both variants and on the right we plot the Bregman distance between their iterates.} \label{fig:comparison between methods}
\end{figure}

Since the certificate \eqref{convergenceGuaranteeExperiment} requires computing all eigenvalues of the matrix $\Y_t$ (due to the scalar $b_t$) which makes no sense in practice when our goal is to use only low rank SVD computations, as discussed in Section \ref{sec:certificate}, we consider the harder-to-satisfy certificate
\begin{align} \label{convergenceGuaranteeExperiment2}
\log\left(\frac{(n-r)\lambda_{r+1}(\Y_t)}{\varepsilon_t b^{r+1}_t}\right)\le 2\varepsilon_t, \quad b^{r+1}_t =\sum_{i=1}^{r+1}\lambda_i(\Y_t).
\end{align}

Recall that when \eqref{convergenceGuaranteeExperiment2} holds, \eqref{convergenceGuaranteeExperiment} also holds, but not the other way around. Also, computing \eqref{convergenceGuaranteeExperiment2} only requires computing the top $r+1$ eigenvalues of $\Y_t$, i.e., increasing the SVD rank used in Algorithm \ref{alg:LRMD} by only 1. In Table \ref{table:at_bt} we provide evidence that \eqref{convergenceGuaranteeExperiment2} is indeed a reliable certificate: in all runs it  holds starting from the same iteration for which the tighter yet  computationally more expensive certificate \eqref{convergenceGuaranteeExperiment} holds.
\begin{table*}\renewcommand{\arraystretch}{1.3}
{\small
\begin{center}
  \begin{tabular}{| c | c | c | c |} \hline 
      & avg. first iter.  & avg. first iter. \\ 
   $\rank(\M)$  & $\log\left(\frac{(n-r)\lambda_{r+1}(\Y_t)}{\varepsilon_tb_t}\right)\le 2\varepsilon_t$ holds & $\log\left(\frac{(n-r)\lambda_{r+1}(\Y_t)}{\varepsilon_tb^{r+1}_t}\right)\le 2\varepsilon_t$ holds\\ \hline
   $1$ & $1$  & $1$\\ \hline
   $5$ & $1$ & $1$\\ \hline
   $20$ & $8.6$ & $8.6$\\ \hline
  \end{tabular}
  \caption{Comparison between the first iteration the convergence guarantee \eqref{convergenceGuaranteeExperiment} holds vs. the first iteration the weaker condition $\log\left(\frac{(n-r)\lambda_{r+1}(\Y_t)}{\varepsilon_tb^{r+1}_t}\right)\le 2\varepsilon_t$ holds for $n=200$. 
  }\label{table:at_bt}
\end{center}
}
\vskip -0.2in
\end{table*}\renewcommand{\arraystretch}{1}

\subsection{Rank of ground-truth matrix is overestimated}
In a second line of experiments we examine the case in which $\rank(\M)$ --- the rank of the ground-truth matrix is not precisely known, but only overestimated. That is, in the following experiments, both for the SVD computations and for the initialization in Algorithm \ref{alg:LRMD}, we use rank parameter $r > \rank(\M)$.

We run all experiments with $n=200$. As can be seen in Table \ref{table:bigR}, the convergence guarantee of \eqref{convergenceGuaranteeExperiment} begins to hold starting from a very early stage.
In Figure \ref{fig:comparison between methods bigR} it can be seen that the results using $r>\rank(\M)$ are very similar to the previously examined case $r = \rank(\M)$, and that the standard MEG method and our low-rank variant perform very similarly also in this case. 
\begin{table*}[!htb]\renewcommand{\arraystretch}{1.3}
{\small
\begin{center}
  \begin{tabular}{| c | c | c | c |} \hline 
   $\rank(\M)$  & SVD \rank & avg. first iteration \eqref{convergenceGuaranteeExperiment} holds\\ \hline
   $1$ & $3$  & $2$\\ \hline
   $5$ & $10$ & $3.85$\\ \hline
   $20$ & $30$ & $9.25$\\ \hline
  \end{tabular}
  \caption{The first iteration the convergence certificate \eqref{convergenceGuaranteeExperiment} holds when using $r>\rank(\M)$ for $n=200$. 
  }\label{table:bigR}
\end{center}
}
\vskip -0.2in
\end{table*}\renewcommand{\arraystretch}{1} 

\begin{figure}
\centering
  \begin{subfigure}[t]{0.45\textwidth}
    \includegraphics[width=\textwidth]{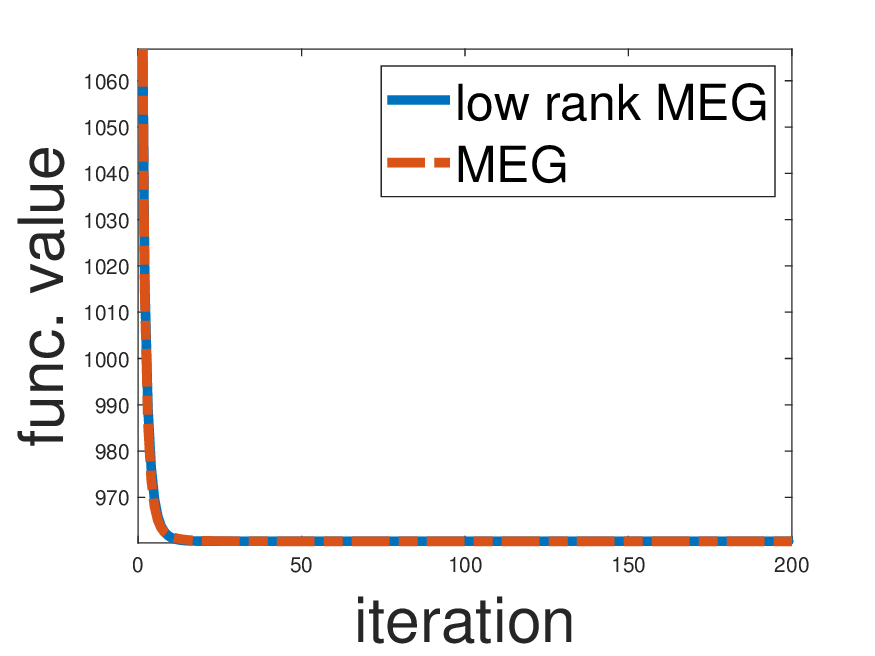}
    \label{fig:12}
  \end{subfigure}\hfil
  \begin{subfigure}[t]{0.45\textwidth}
    \includegraphics[width=\textwidth]{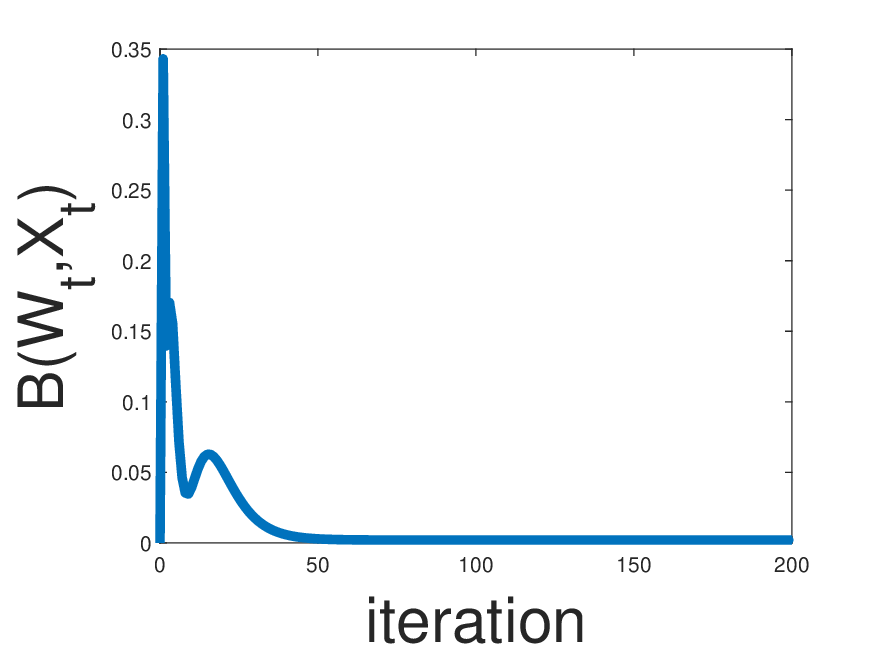}    
    \label{fig:22}
  \end{subfigure}\hfil
  \caption*{$\rank(\M)=1$, SVD rank = $3$}
  \label{fig:bigRdistIteration200_1}
\medskip
  \begin{subfigure}[t]{0.45\textwidth}
    \includegraphics[width=\textwidth]{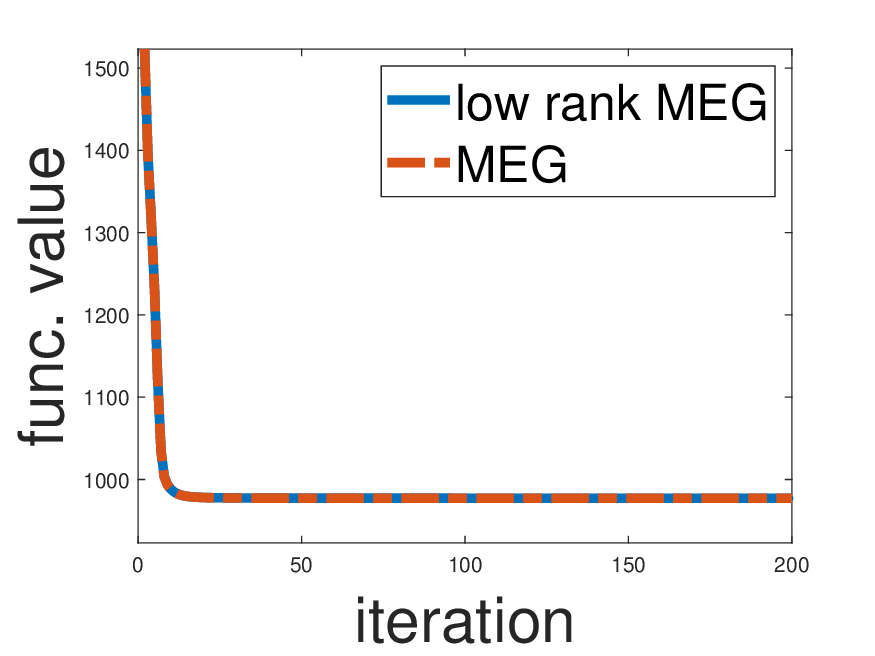}
    \label{fig:32}
  \end{subfigure}
  \begin{subfigure}[t]{0.45\textwidth}
    \includegraphics[width=\textwidth]{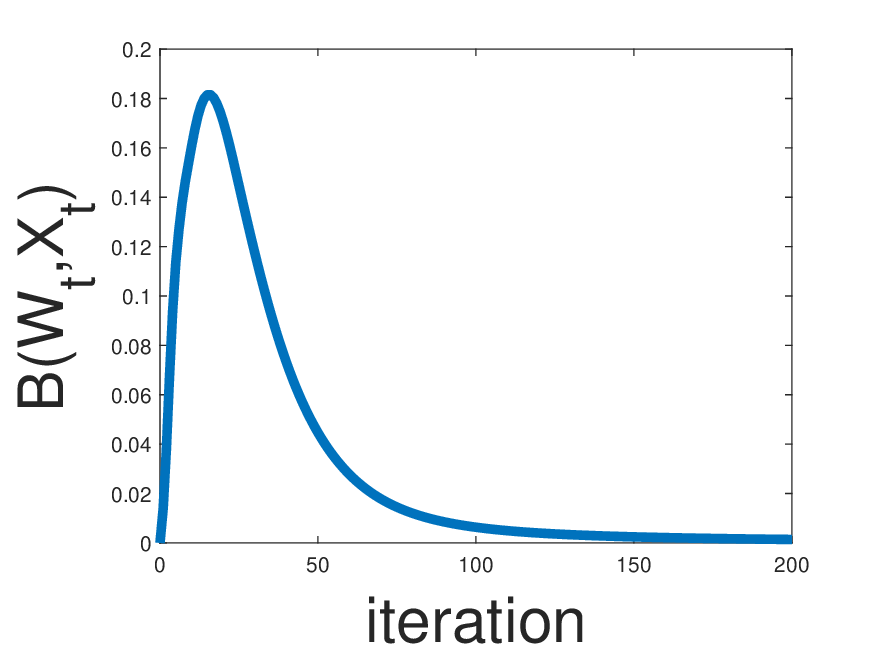}    
    \label{fig:42}
  \end{subfigure}
  \caption*{$\rank(\M)=5$, SVD rank = $10$}
  \label{fig:bigRdistIteration200_5}
\medskip
  \begin{subfigure}[t]{0.45\textwidth}
    \includegraphics[width=\textwidth]{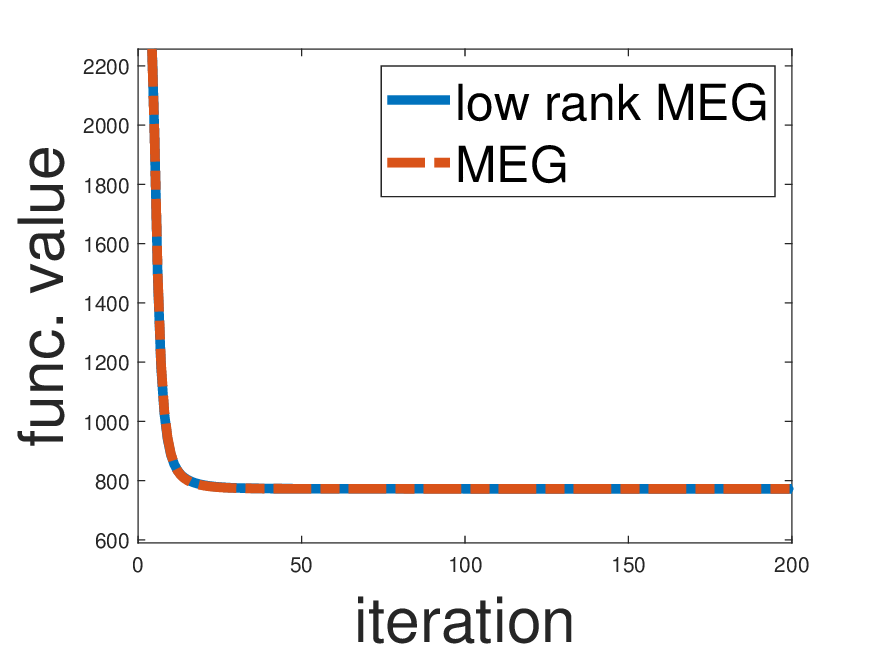}
    \label{fig:52}
  \end{subfigure}
  \begin{subfigure}[t]{0.45\textwidth}
    \includegraphics[width=\textwidth]{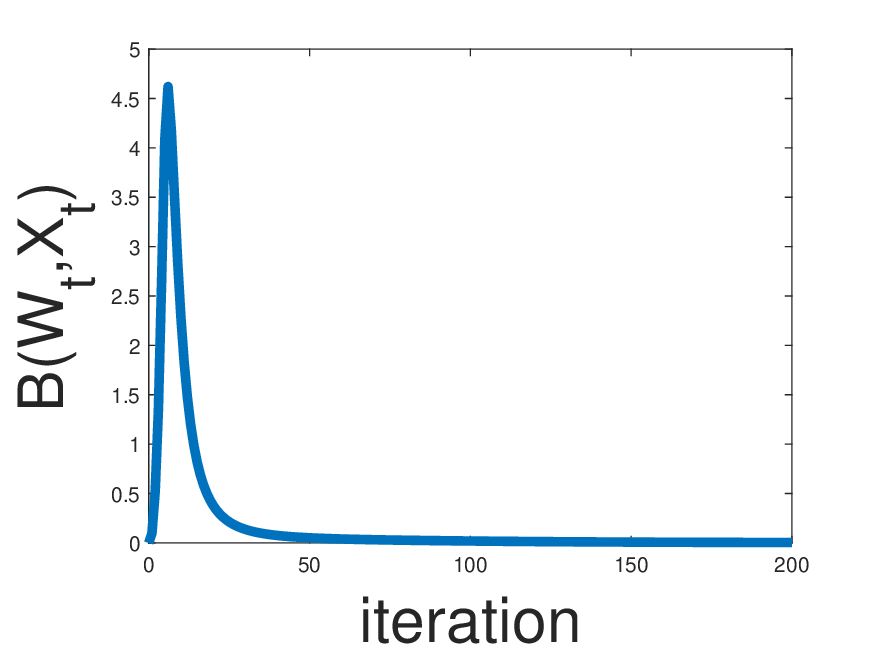}    
    \label{fig:62}
  \end{subfigure}
  \caption*{$\rank(\M)=20$, SVD rank = $30$}
  \caption{Comparison between the low-rank MEG and the standard MEG for the low-rank matrix recovery from quadratic measurements problem where $n=200$ and $r>\rank(\M)$. On the left we plot the convergence in function values for both variants and on the right we plot the Bregman distance between their iterates.} \label{fig:comparison between methods bigR}
\end{figure}

\subsection{Higher-rank measurements}
In a third line of experiments we consider a small change to Problem \eqref{eq:recoveryProb} where we use measurement matrices of rank greater than $1$. Formally, we consider the problem
\begin{align*} 
\min_{\X\succeq0\ \trace(\X)=\tau}\left\lbrace f(\X):= \frac{1}{2}\sum_{i=1}^m \left(\trace(\A_i^{\top}\X \B_i)-\y_i\right)^2\right\rbrace,
\end{align*}
for matrices $\A_i,\B_i\in\reals^{n\times p}, i=1,\dots,m, p > 1$, which are chosen to have orthonormal columns. 

Note that already in the case of a single measurement, i.e., $m=1$, a simple calculation shows that the smoothness parameter w.r.t. the Frobenius norm (relevant to Euclidean gradient methods) of $f(\cdot)$  is upper-bounded by $\Vert{\A_1\B_1^{\top}}\Vert_F^2 = p$, while  the smoothness parameter w.r.t. the spectral norm (relevant to the MEG method) is upper-bounded only by $\Vert{\A_1\B_1^{\top}}\Vert_2^2 = 1$\footnote{Indeed standard algebraic manipulations yield that for any $\X,\Y\in\mbS^n$, $\Vert{\nabla{}f(\X) - \nabla{}f(\Y)}\Vert_F = \Vert{\langle{\X-\Y,\A_1\B_1^{\top}}\rangle\A_1\B_1^{\top}}\Vert_F \leq \Vert{\Vert{\X-\Y}\Vert_F\Vert{\A_1\B_1^{\top}}\Vert_F\A_1\B_1^{\top}}\Vert_F = \Vert{\A_1\B_1^{\top}}\Vert_F^2\Vert{\X-\Y}\Vert_F$, and similarly, $\Vert{\nabla{}f(\X) - \nabla{}f(\Y)}\Vert_2\leq \Vert{\Vert{\X-\Y}\Vert_*\Vert{\A_1\B_1^{\top}}\Vert_2\A_1\B_1^{\top}}\Vert_2 = \Vert{\A_1\B_1^{\top}}\Vert_2^2\Vert{\X-\Y}\Vert_*$.}, and thus, at least in terms of worst-case gradient complexity and for large values of $p$, MEG can be significantly faster than its Euclidean counterpart, which makes this setting in particular interesting in our context.

We set the rank of the ground truth matrix to $\rank(\M)=5$,  the SVD rank parameter to $r=\rank(\M)$, the smoothness parameter to $\beta=12\sqrt{rn}$, the constant $c=100$, the dimension to $n=200$, and the number of iterations to $T=400$. The rank of the measurements are set to either $p=10$ or $p=50$. All other parameters are as in the previous experiments.

As can be seen in Table \ref{table:resultsHighRankOperators}, strict complementarity holds in this case too and the convergence guarantee condition of \eqref{convergenceGuaranteeExperiment} holds from early on. Also, similarly to the previous experiments, as can be seen in Figure \ref{fig:highRankOperators}, our low-rank MEG method performs very similarly to the standard MEG method.

\begin{table}\renewcommand{\arraystretch}{1.3} 
        {\small
        \begin{center}
 	    \begin{tabular}{| c | c | c | c | c | c |} \hline 
   		$\rank(\A_i)=$ & avg. initialization & avg. 	recovery &  avg. gap in & avg. dual & avg. first iter. \\
    $\rank(\B_i)$ &  error & error & $\nabla f(\X^*)$ & gap & \eqref{convergenceGuaranteeExperiment} holds \\ \hline 
   $10$ & $1.5218$ & $0.0539$ & $56.6864$ & $0.0244$ & $18.4$ \\ \hline
      $50$ & $3.0973$ & $0.0505$ & $284.1826$ & $0.1236$ & $4.5$ \\ \hline
  \end{tabular}
  \caption{Results for the low-rank matrix recovery from quadratic measurements problem using the Low-Rank Matrix Exponentiated Gradient Method with operators of rank greater than $1$. The fourth column is the spectral gap $\lambda_{n-r}(\nabla{}f(\X^*)) - \lambda_{n}(\nabla{}f(\X^*))$, and the last column records the iteration of the algorithm at which  the convergence certificate \eqref{convergenceGuaranteeExperiment} begins to take effect.}
\label{table:resultsHighRankOperators}
  \end{center}
  }  
  \vskip -0.2in
\end{table}\renewcommand{\arraystretch}{1}

\begin{figure}
\centering
\caption*{$p=\rank(\A_i)=\rank(\B_i)=10$}
\begin{subfigure}[t]{0.45\textwidth}
    \includegraphics[width=\textwidth]{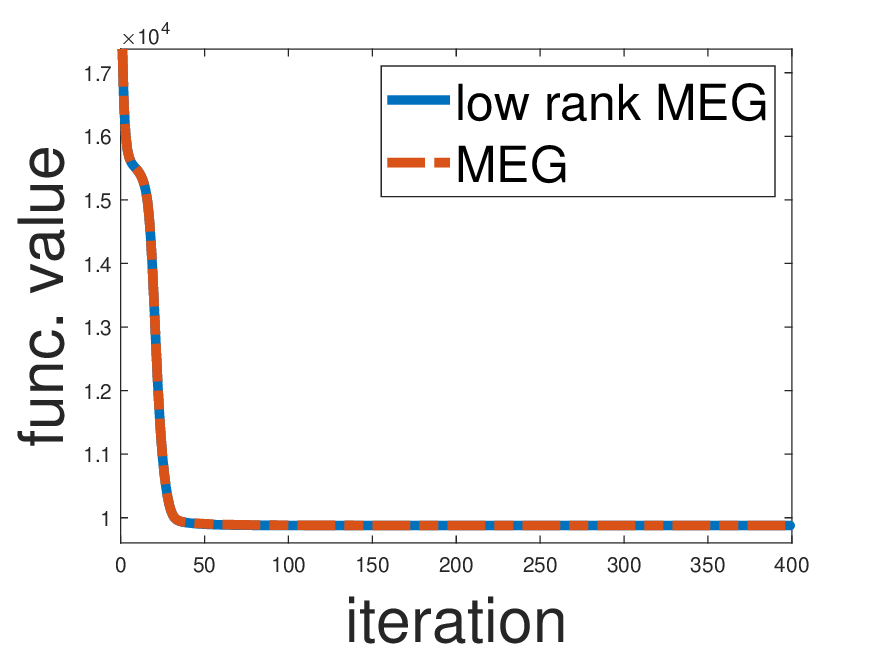}
    \label{fig:71}
  \end{subfigure}
  \begin{subfigure}[t]{0.45\textwidth}
    \includegraphics[width=\textwidth]{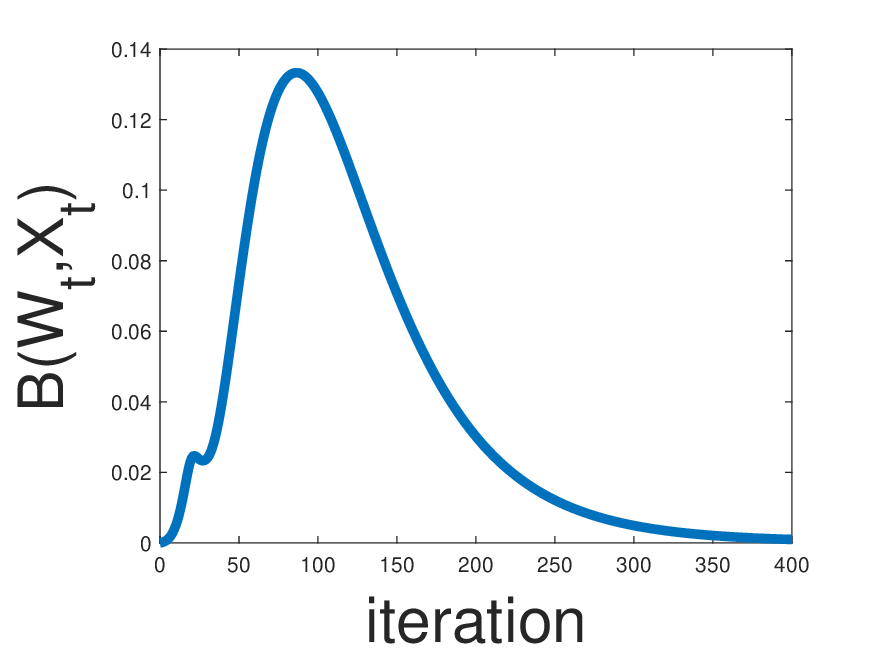}    
    \label{fig:72}
  \end{subfigure}
  \medskip
  \caption*{$p=\rank(\A_i)=\rank(\B_i)=50$}
  \begin{subfigure}[t]{0.45\textwidth}
    \includegraphics[width=\textwidth]{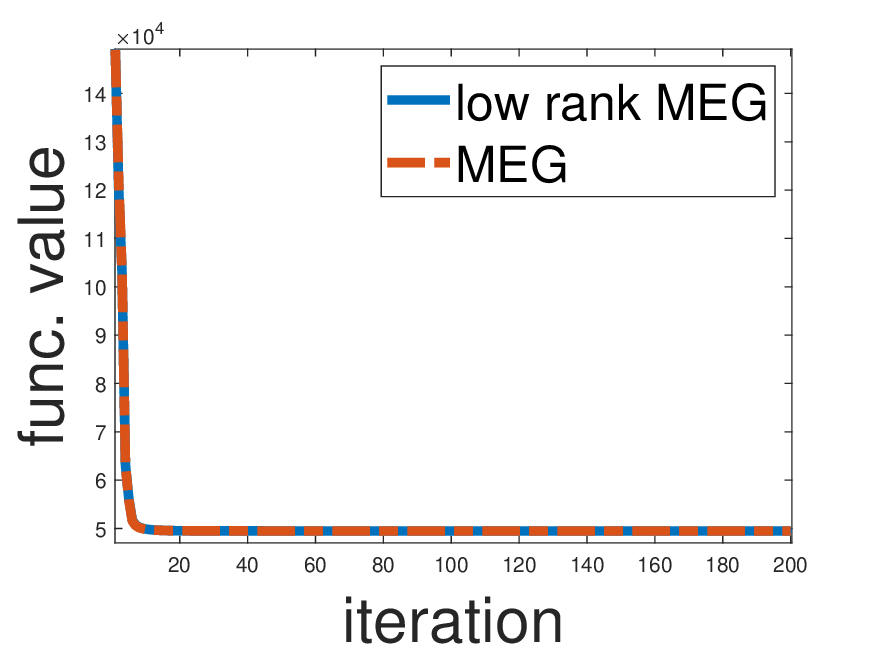}
    \label{fig:5}
  \end{subfigure}
  \begin{subfigure}[t]{0.45\textwidth}
    \includegraphics[width=\textwidth]{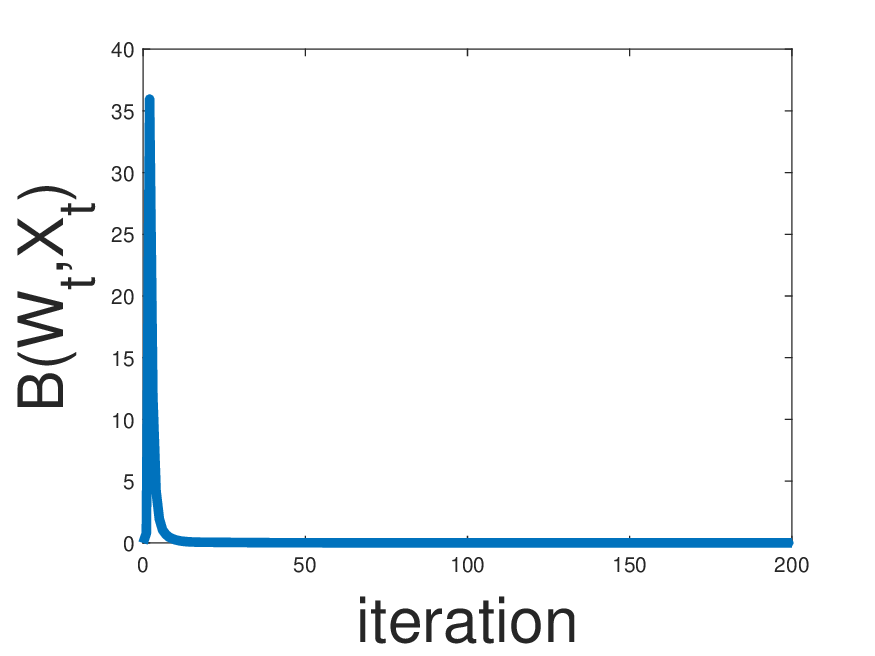}    
    \label{fig:6}
  \end{subfigure}
  \caption{Comparison between the low-rank MEG algorithm and the standard MEG method for the low-rank matrix recovery from quadratic measurements problem with $n=200$ and measurements of rank $p>1$. On the left we plot the convergence in function values for both variants and on the right we plot the Bregman distance between their iterates.} \label{fig:highRankOperators}
\end{figure}

\subsection{Performance of low-rank MEG without strict complementarity}
In the fourth and final line of experiments we empirically examine the necessity of the strict complementarity assumption to the convergence of our low-rank MEG method. We consider the same setup of Problem \eqref{eq:recoveryProb}, but this time with noiseless measurements, that is, we set $\kappa=0$. We additionally set the trace parameter to $\tau=\trace(\M)$. Importantly, these choices guarantee that the ground-truth matrix $\M$ is a $\rank(\M)$-optimal solution of Problem \eqref{eq:recoveryProb}  and that $\nabla{}f(\M)=\mathbf{0}$, which in turn implies that strict complementarity does not hold. 

We consider a simplified setup in which we set set the dimension to $n=200$, the number of measurements to $m=1$, the rank of the ground-truth matrix to $r=\rank(\M)=1$, and we set the step-size to $0.03$. The rest of the parameters are the same as in the previous experiments. We initialize both the standard MEG and low-rank MEG methods with the same matrix as in \eqref{eq:exp:init}.  As before, the results are the averages of 20 i.i.d. runs.

As can be seen in the left panel of Figure \ref{fig:necessity of strict complementarity}, while the standard MEG method converges extremely fast, the low-rank MEG method does not converge at all for many iterations and remains ``stuck'' at roughly the same function value until eventually managing to reduce the function value and converge. Moreover, the right panel of Figure \ref{fig:necessity of strict complementarity} shows that, as opposed to the previous experiments (in which strict complementarity hold), the distances between the iterates of the two methods grow over time, which suggests that they converge to very different solutions. Thus, this experiment not only exhibits the potentially poor performance of the low-rank MEG method without strict complementarity, but it also empirically demonstrates the neccessity of this assumption to our theoretical analysis, which is based on establishing that the low-rank MEG updates approximate sufficiently well their exact full-rank counterparts.

\begin{figure}
\centering
\begin{subfigure}[t]{0.45\textwidth}
    \includegraphics[width=\textwidth]{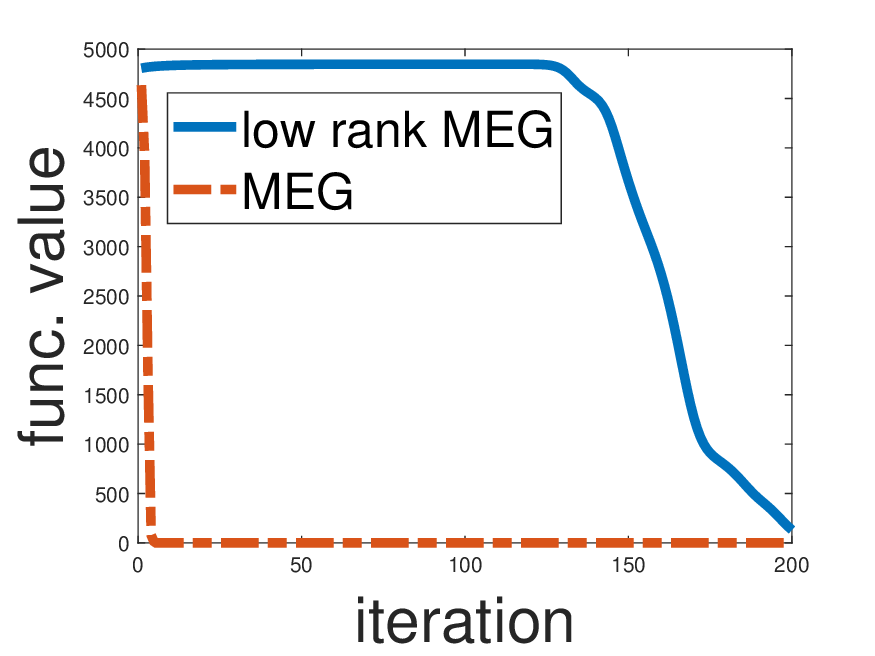}
    \label{fig:71}
  \end{subfigure}
  \begin{subfigure}[t]{0.45\textwidth}
    \includegraphics[width=\textwidth]{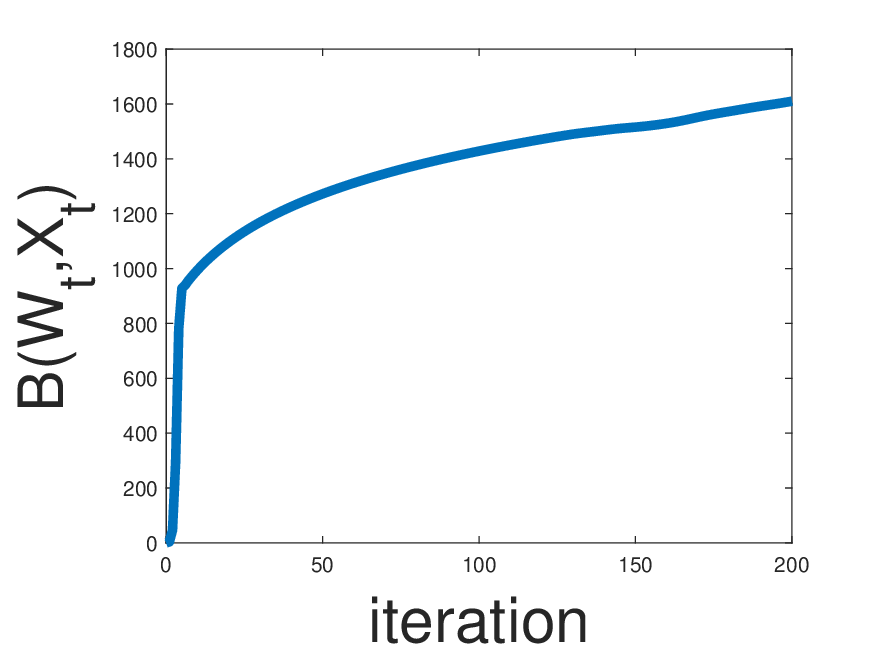}    
    \label{fig:72}
  \end{subfigure}
  \caption{Comparison between the low-rank MEG and the standard MEG for the low-rank matrix recovery from quadratic measurements problem where $n=200$ and $r=\rank(\M)=1$ with $\kappa=0$ and $\tau=\trace(\M)$. On the left we plot the convergence in function values for both variants and on the right we plot the Bregman distance between their iterates.} \label{fig:necessity of strict complementarity}
\end{figure}

\bibliographystyle{plain}
\bibliography{bilbli}

\begin{thebibliography}{10}

\bibitem{allen2016using}
Zeyuan Allen-Zhu, Yin~Tat Lee, and Lorenzo Orecchia.
\newblock Using optimization to obtain a width-independent, parallel, simpler,
  and faster positive sdp solver.
\newblock In {\em Proceedings of the twenty-seventh annual ACM-SIAM symposium
  on Discrete algorithms}, pages 1824--1831. SIAM, 2016.

\bibitem{AllenLi2017-FTCL}
Zeyuan {Allen-Zhu} and Yuanzhi Li.
\newblock Follow the compressed leader: Faster online learning of eigenvectors
  and faster mmwu.
\newblock {\em Proceedings of the 34th International Conference on Machine
  Learning}, 2017.

\bibitem{alman2021refined}
Josh Alman and Virginia~Vassilevska Williams.
\newblock A refined laser method and faster matrix multiplication.
\newblock In {\em Proceedings of the 2021 ACM-SIAM Symposium on Discrete
  Algorithms (SODA)}, pages 522--539. SIAM, 2021.

\bibitem{beckOptimizationBook}
Amir Beck.
\newblock First-order methods in optimization.
\newblock {\em SIAM}, 2017.

\bibitem{beckMD}
Amir Beck and Marc Teboulle.
\newblock Mirror descent and nonlinear projected subgradient methods for convex
  optimization.
\newblock {\em Oper. Res. Lett.}, 31(3):167--175, 2003.

\bibitem{sebastienMD}
S{\'{e}}bastien Bubeck.
\newblock Convex optimization: Algorithms and complexity.
\newblock {\em Foundations and Trends in Machine Learning}, 8(3-4):231--357,
  2015.

\bibitem{phaseRetrieval1}
Emmanuel~J. Cand{\`{e}}s, Yonina~C. Eldar, Thomas Strohmer, and Vladislav
  Voroninski.
\newblock Phase retrieval via matrix completion.
\newblock {\em {SIAM} Rev.}, 57(2):225--251, 2015.

\bibitem{robustPCA1}
Emmanuel~J. Cand{\`{e}}s, Xiaodong Li, Yi~Ma, and John Wright.
\newblock Robust principal component analysis?
\newblock {\em Journal of the ACM}, 58, 2009.

\bibitem{MatrixCompletion1}
Emmanuel~J. Cand{\`{e}}s and Benjamin Recht.
\newblock Exact matrix completion via convex optimization.
\newblock {\em Foundations of Computational mathematics,}, 9(6):717--772, 2009.

\bibitem{Carmon2019ARS}
Yair Carmon, John~C. Duchi, Aaron Sidford, and Kevin Tian.
\newblock A rank-1 sketch for matrix multiplicative weights.
\newblock {\em COLT}, 2019.

\bibitem{concentrationBound}
Fan Chung and Linyuan Lu.
\newblock Concentration inequalities and martingale inequalities: A survey.
\newblock {\em Internet Mathematics}, 3(1):79--127, 2006.

\bibitem{spectralFrankWolfe}
Lijun Ding, Yingjie Fei, Qiantong Xu, and Chengrun Yang.
\newblock Spectral frank-wolfe algorithm: Strict complementarity and linear
  convergence.
\newblock {\em ICML}, 2020.

\bibitem{drusvyatskiy2018error}
Dmitriy Drusvyatskiy and Adrian~S Lewis.
\newblock Error bounds, quadratic growth, and linear convergence of proximal
  methods.
\newblock {\em Mathematics of Operations Research}, 43(3):919--948, 2018.

\bibitem{garberRankOne}
Dan Garber.
\newblock Linear convergence of frank-wolfe for rank-one matrix recovery
  without strong convexity.
\newblock {\em CoRR}, abs/1912.01467, 2019.

\bibitem{garberLowRankSmooth}
Dan Garber.
\newblock On the convergence of projected-gradient methods with low-rank
  projections for smooth convex minimization over trace-norm balls and related
  problems.
\newblock {\em SIAM Journal on Optimization}, 2019.

\bibitem{garberStochasticLowRank}
Dan Garber.
\newblock On the convergence of stochastic gradient descent with low-rank
  projections for convex low-rank matrix problems.
\newblock {\em Conference on Learning Theory, {COLT}}, 125:1666--1681, 2020.

\bibitem{MatrixCompletion4}
Rong Ge, Jason~D Lee, and Tengyu Ma.
\newblock Matrix completion has no spurious local minimum.
\newblock {\em Advances in Neural Information Processing Systems 29}, pages
  2973--2981, 2016.

\bibitem{golub2013matrix}
Gene~H Golub and Charles~F Van~Loan.
\newblock {\em Matrix computations}.
\newblock JHU press, 2013.

\bibitem{Weyl}
Roger~A. Horn and Charles~R. Johnson.
\newblock {\em Topics in Matrix Analysis}.
\newblock Cambridge University Press, 1994.

\bibitem{MatrixCompletion3}
Martin Jaggi and Marek Sulovsk\'{y}.
\newblock A simple algorithm for nuclear norm regularized problems.
\newblock {\em Proceedings of the 27th International Conference on
  International Conference on Machine Learning}, page 471–478, 2010.

\bibitem{jia2001analysis}
Zhongxiao Jia and GW~Stewart.
\newblock An analysis of the rayleigh--ritz method for approximating
  eigenspaces.
\newblock {\em Mathematics of computation}, 70(234):637--647, 2001.

\bibitem{kyFan}
Albert~W. Marshall, Ingram Olkin, and Barry~C. Arnold.
\newblock {\em Inequalities: Theory of Majorization and its Applications},
  volume 143.
\newblock Springer, second edition, 2011.

\bibitem{robustPCA5}
Cun Mu, Yuqian Zhang, John Wright, and Donald Goldfarb.
\newblock Scalable robust matrix recovery: Frank-wolfe meets proximal methods.
\newblock {\em {SIAM} Journal on Scientific Computing}, 38(5):A3291--A3317,
  2016.

\bibitem{Musco15}
Cameron Musco and Christopher Musco.
\newblock Randomized block krylov methods for stronger and faster approximate
  singular value decomposition.
\newblock {\em Advances in Neural Information Processing Systems 28}, pages
  1396--1404, 2015.

\bibitem{musco2018stability}
Cameron Musco, Christopher Musco, and Aaron Sidford.
\newblock Stability of the lanczos method for matrix function approximation.
\newblock In {\em Proceedings of the Twenty-Ninth Annual ACM-SIAM Symposium on
  Discrete Algorithms}, pages 1605--1624. SIAM, 2018.

\bibitem{phaseRetrieval2}
Praneeth Netrapalli, Prateek Jain, and Sujay Sanghavi.
\newblock Phase retrieval using alternating minimization.
\newblock {\em Advances in Neural Information Processing Systems}, pages
  2796--2804, 2013.

\bibitem{robustPCA3}
Praneeth Netrapalli, UN~Niranjan, Sujay Sanghavi, Animashree Anandkumar, and
  Prateek Jain.
\newblock Non-convex robust pca.
\newblock {\em Advances in Neural Information Processing Systems}, page
  1107–1115, 2014.

\bibitem{peng2012faster}
Richard Peng and Kanat Tangwongsan.
\newblock Faster and simpler width-independent parallel algorithms for positive
  semidefinite programming.
\newblock In {\em Proceedings of the twenty-fourth annual ACM symposium on
  Parallelism in algorithms and architectures}, pages 101--108, 2012.

\bibitem{MatrixCompletion2}
Benjamin Recht.
\newblock A simpler approach to matrix completion.
\newblock {\em the Journal of Machine Learning Research}, page 3413–3430,
  2011.

\bibitem{saad2011numerical}
Yousef Saad.
\newblock {\em Numerical methods for large eigenvalue problems: revised
  edition}.
\newblock SIAM, 2011.

\bibitem{GoldenThompson}
Colin~J. Thompson.
\newblock Inequality with applications in statistical mechanics.
\newblock {\em Journal of Mathematical Physics}, 6:1812--1813, 1965.

\bibitem{HoeffdingBound}
Joel~A. Tropp.
\newblock User-friendly tail bounds for sums of random matrices.
\newblock {\em Foundations of Computational Mathematics}, 12(4):389--434, 2012.

\bibitem{BregmanMatrices}
Koji Tsuda, Gunnar R{\"a}tsch, and Manfred~K. Warmuth.
\newblock Matrix exponentiated gradient updates for on-line learning and
  bregman projection.
\newblock {\em Journal of Machine Learning Research}, 6:995--1018, 2005.

\bibitem{robustPCA2}
John Wright, Arvind Ganesh, Shankar Rao, Yigang Peng, and Yi~Ma.
\newblock Robust principal component analysis: Exact recovery of corrupted
  low-rank matrices via convex optimization.
\newblock {\em Advances in Neural Information Processing Systems 22}, pages
  2080--2088, 2009.

\bibitem{robustPCA4}
PXinyang Yi, Dohyung Park, Yudong Chen, and Constantine Caramanis.
\newblock Fast algorithms for robust pca via gradient descent.
\newblock {\em Advances in Neural Information Processing Systems}, page
  4152–4160, 2016.

\bibitem{strongconvexity}
Yao-Liang Yu.
\newblock The strong convexity of von neumann’s entropy.
\newblock In {\em Unpublished Manuscript}, 2013.

\bibitem{scalableSDP}
Alp Yurtsever, Joel Tropp, Olivier Fercoq, Madeleine Udell, and Volkan Cevher.
\newblock Scalable semidefinite programming.
\newblock {\em SIAM Journal on Mathematics of Data Science}, 3:171--200, 2021.

\bibitem{phaseRetrieval3}
Alp Yurtsever, Madeleine Udell, Joel~A. Tropp, and Volkan Cevher.
\newblock Sketchy decisions: Convex low-rank matrix optimization with optimal
  storage.
\newblock {\em Proceedings of the 20th International Conference on Artificial
  Intelligence and Statistics, {AISTATS} 2017, 20-22 April 2017, Fort
  Lauderdale, FL, {USA}}, 54:1188--1196, 2017.

\bibitem{zhou2017unified}
Zirui Zhou and Anthony Man-Cho So.
\newblock A unified approach to error bounds for structured convex optimization
  problems.
\newblock {\em Mathematical Programming}, 165(2):689--728, 2017.

\end{thebibliography}

\appendix
\section{The Davis-Kahan $\sin\theta$ Theorem.} 
Below we state and prove a variant of the Davis-Kahan $\sin{\theta}$ theorem which is particularly useful for our analysis.
  
    \begin{lemma} \label{lemma:DavisKahan}
  Let $\X\in\mathcal{S}_n$, denote $r=\rank(\X)$, and let us write its eigen-decomposition as $\X=\V_r{\Lambda_r}{\V_r}^{\top}$, where $\V_r\in\reals^{n\times{}r}$, $\Lambda_r\in\reals^{r\times r}$. Let $\Z\in\mathcal{S}_n$ and let $\W_{r}\in\reals^{n\times{}r}$ be a matrix whose columns are the $r$ leading eigenvectors of $\Z$. Then,
  \begin{align*}
\Vert\W_{r}\W_{r}^{\top}-\V_r{\V_r}^{\top}\Vert_F \le \frac{2\Vert\Z-\X\Vert_F}{\lambda_{r}(\X)}.
 \end{align*}
  \end{lemma}
  
  \begin{proof}
 Write the eigen-decomposition of $\X$ as $\X=\V_r{\Lambda}{\V_r}^{\top}=\sum_{i=1}^{r}\lambda_i(\X){\mathbf{v}_i}{\mathbf{v}_i}^{\top}$. Then, we have that
  \begin{align} \label{ineq:DK1}
  \lambda_{r}(\X)[r-\langle\W_{r}\W_{r}^{\top},\V_r{\V_r}^{\top}\rangle] & = \lambda_{r}(\X)\sum_{i=1}^{r}[1-{\mathbf{v}_i}^{\top}(\W_{r}\W_{r}^{\top}){\mathbf{v}_i}] \nonumber
  \\ & \underset{(a)}{\le} \sum_{i=1}^{r}\lambda_i(\X)[1-{\mathbf{v}_i}^{\top}(\W_{r}\W_{r}^{\top}){\mathbf{v}_i}] \nonumber
  \\ & = \sum_{i=1}^{r}\lambda_i(\X) - \sum_{i=1}^{r}\lambda_i(\X){\mathbf{v}_i}^{\top}(\W_{r}\W_{r}^{\top}){\mathbf{v}_i} \nonumber
  \\ & =\langle \V_r{\V_r}^{\top}-\W_{r}\W_{r}^{\top},\X\rangle,
  \end{align}
  where (a) follows since $\W_{r}\W_{r}^{\top}\preceq\I$ which implies that ${\mathbf{v}_i}^{\top}(\W_{r}\W_{r}^{\top}){\mathbf{v}_i}\le {\mathbf{v}_i}^{\top}{\mathbf{v}_i}=1$.

Therefore, it holds that  
   \begin{align} \label{ineq:DK2}
  \Vert\W_{r}\W_{r}^{\top}-\V_r{\V_r}^{\top}\Vert_F^2 & = \Vert\W_{r}\W_{r}^{\top}\Vert_F^2 + \Vert\V_r{\V_r}^{\top}\Vert_F^2 - 2\langle\W_{r}\W_{r}^{\top},\V_r{\V_r}^{\top}\rangle \nonumber
   \\ & = 2r - 2\langle\W_{r}\W_{r}^{\top},\V_r{\V_r}^{\top}\rangle \nonumber
   \\ & \le  \frac{2}{\lambda_{r}(\X)}\langle \V_r{\V_r}^{\top}-\W_{r}\W_{r}^{\top},\X\rangle,
 \end{align}
  where the last inequality follows from \eqref{ineq:DK1}.
 In addition, it can be seen that
    \begin{align} \label{ineq:DK3}
\langle \V_r{\V_r}^{\top}-\W_{r}\W_{r}^{\top},\X\rangle & = \langle \V_r{\V_r}^{\top},\X\rangle - \langle\W_{r}\W_{r}^{\top},\Z\rangle +\langle\W_{r}\W_{r}^{\top},\Z-\X\rangle \nonumber
\\ & \le \langle \V_r{\V_r}^{\top},\X\rangle - \langle\V_r{\V_r}^{\top},\Z\rangle +\langle\W_{r}\W_{r}^{\top},\Z-\X\rangle \nonumber
\\ & \le \Vert\W_{r}\W_{r}^{\top}-\V_r{\V_r}^{\top}\Vert_F\Vert\Z-\X\Vert_F,
 \end{align}
 where the last inequality follows from the Cauchy-Schwarz inequality.
 Finally, plugging \eqref{ineq:DK3} into the RHS of \eqref{ineq:DK2}, we indeed obtain that
 \begin{align*}
\Vert\W_{r}\W_{r}^{\top}-\V_r{\V_r}^{\top}\Vert_F \le \frac{2\Vert\Z-\X\Vert_F}{\lambda_{r}(\X)}.
 \end{align*}
 \end{proof}

\end{document}